\newtheorem{thm}{Theorem}[section]
\newtheorem{prop}[thm]{Proposition}
\newtheorem{cor}[thm]{Corollary}
\newtheorem{lem}[thm]{Lemma}
\newtheorem{rem}[thm]{Remark}
\newtheorem{defn}[thm]{Definition}
\newtheorem{assump}[thm]{Assumption}
\numberwithin{equation}{section} 
\begin{document}

\title[Oscillatory integrals with phase functions of real powers]
{Oscillatory integrals with phase functions of positive real powers and asymptotic expansions} 

\author{Toshio NAGANO and Naoya MIYAZAKI} 

\address{Department of Liberal Arts, Faculty of Science and Technology, Tokyo University of Science, 2641, Yamazaki, Noda, Chiba 278-8510, JAPAN} 
\email{tonagan@rs.tus.ac.jp}

\address{Department of Mathematics, Faculty of Economics, Keio University, Yokohama, 223-8521, JAPAN} 
\email{miyazaki@a6.keio.jp}

\thanks{The first author was supported by Tokyo University of Science Graduate School doctoral program scholarship 
and an exemption of the cost of equipment from 2016 to 2018 
and would like to thank to Emeritus Professor Minoru Ito for 
giving me an opportunity of studies and preparing the environment.} 

%\dedicatory{Dedicated to the memory of Professor Shoshichi Kobayashi} 

\keywords{Oscillatory integral, the method of stationary phase, asymptotic expansion, Fresnel integral.} 

\subjclass[2010]{Primary 42B20 ; Secondary 41A60, 33B20} 
%42B20   Singular and oscillatory integrals (Calderon-Zygmund, etc.) 
%41A60   Asymptotic approximations, asymptotic expansions (steepest descent, etc.) 
%33B20   Incomplete beta and gamma functions (error functions, probability integral, Fresnel integrals) 

\date {March 23, 2022} 

\begin{abstract} 
As to methods for expanding an oscillatory integral into an asymptotic series with respect to the parameter, the method of stationary phase for the non-degenerate phases and the method of using resolution of singularities for degenerate phases are known. 
The aim of this paper is to extend the former for degenerate phases with positive real powers without using resolution of singularities. 
For this aim, we first generalize Fresnel integrals as oscillatory integrals with phase functions of positive real powers. 
Next, by using this result, 
we have asymptotic expansions of oscillatory integrals for degenerate phases with positive real powers including moderate oscillations and for a wider amplitude class in one variable. 
Moreover, we obtain asymptotic expansions of oscillatory integrals for degenerate phases consisting of sums of monomials in each variable including the types $A_{k}$, $E_6$, $E_8$ in multivariable. 
\end{abstract} 

\maketitle 

\section{Introduction} 
\label{Introduction} 
The oscillatory integral is one of the improper integrals with conditional convergence, 
which appears as an integral representation of a pseudo-differential operator or a Fourier integral operator 
in constructing a fundamental solution of an initial value problem for a hyperbolic partial differential equation or a time-dependent Schr\"{o}dinger equation. (\cite{Hormander01}, \cite{Kumano-go}, \cite{Fujiwara3}, etc.) 

In general, 
for a real-valued function $\phi$ defined on $\mathbb{R}^{n}$, a complex-valued function $a$ defined on $\mathbb{R}^{n}$, and a positive real number $\lambda$, 
if there exists the following limit of the improper integral independent of $\chi$ in the Schwartz space $\mathcal{S}(\mathbb{R}^{n})$ with $\chi(0) = 1$, 
then it is denoted as 
\begin{align} 
I_{\phi}[a](\lambda) 
:= Os\mbox{-}\int_{\mathbb{R}^{n}} e^{i\lambda \phi(x)} a(x) dx 
:= \lim_{\varepsilon \to +0} \int_{\mathbb{R}^{n}} e^{i\lambda \phi(x)} a(x) \chi (\varepsilon x) dx 
\label{oscillatory_integral_def} 
\end{align} 
and called an oscillatory integral with a phase function $\phi$, an amplitude function $a$, and a parameter $\lambda$. 
In particular, 
if $a \in L^{1}(\mathbb{R}^{n})$, then \eqref{oscillatory_integral_def} coincides with the usual Lebesgue integral $\int_{\mathbb{R}^{n}} e^{i\lambda \phi(x)} a(x) dx$ by Lebesgue's dominated convergence theorem. 

It usually is difficult to compute values of oscillatory integrals except for some examples like Fresnel integrals:
\begin{align} 
\int_{0}^{\infty} e^{\pm ix^{2}} dx 
= \frac{\sqrt{\pi}}{2} e^{\pm i \frac{\pi}{4}}. 
\label{Fresnel_integrals} 
\end{align} 
The double sign $\pm$'s are in the same order from now on, unless otherwise mentioned. 
We then expand oscillatory integrals into asymptotic series with respect to their parameters. 
In \eqref{oscillatory_integral_def}, where the amplitude $a$ hardly changes compared to the phase $\phi$, the value of the integral cancels out, 
while where $\phi$ hardly changes compared to $a$, the one remains.  
Therefore if $\phi$ has a critical point, 
then the main contribution to the integral comes from the neighborhood of the point. 
In particular, 
if the critical point of $\phi$ is non-degenerate (then $\phi$ is called a non-degenerate phase), that is, the quadratic form of the Hessian matrix of $\phi$ at the critical point is non-degenerate, 
then the method for obtaining the asymptotic expansion of an oscillatory integral based on Fresnel integrals and the Morse lemma is called the method of stationary phase (Theorem \ref{stationary_phase_method}). 
The case when the critical point of $\phi$ is degenerate (then $\phi$ is called a degenerate phase) is often discussed in relation to the theory of singularities.  
Malgrange \cite{Malgrange} has indicated the form of asymptotic expansions of oscillatory integrals with real analytic phase functions 
based on the asymptotic expansion of Dirac's distribution with supports on level sets in real analytic hyper surfaces that Jeanquartier \cite{Jeanquartier} has obtained by Hironaka's resolution of singularities theorem \cite{Hi}. 
Var\v{c}henko \cite{Var} has constructed resolution of singularities of some real analytic phase functions by using the toric resolution, 
and has indicated power exponents of asymptotic expansions of oscillatory integrals (Theorem \ref{Varchenko_theorm}). 
Moreover as to recent developments, there are \cite{Greenblatt01}, \cite{Greenblatt02}, \cite{Kamimoto-Nose02}, \cite{Kamimoto-Nose03}, \cite{Kamimoto-Nose04}, etc. 
Thus the form of the asymptotic expansion has been already obtained under several hypotheses. 
However, the coefficients of terms in the asymptotic expansion are not obtained concretely except for the ones of some leading terms 
because it is hard to compute resolution of singularities for degenerate phases in general. 

The aim of this paper is to extend the method of stationary phase for degenerate phases with positive real powers without using resolution of singularities. 
Usually the power exponent in the phase is assumed to be a positive integer because it is the order of zero points of a general phase function. 
In this paper, we mainly consider the following oscillatory integrals in one variable 
\begin{align} 
\tilde{I}^{\pm}_{p,q}[a](\lambda) 
:= Os\text{-}\int_{0}^{\infty} e^{\pm i \lambda x^{p}} x^{q-1} a (x) dx 
:= \lim_{\varepsilon \to +0} \int_{0}^{\infty} e^{\pm i \lambda x^{p}} x^{q-1} a (x) \chi (\varepsilon x) dx 
\notag 
\end{align} 
for any positive real number $p$ and $q$ (Definition \ref{oscillatory_integral_definition}). 
The integral of this type is considered in \cite{Kamimoto-Nose04} for the case when $p$ is a positive rational number (See section \ref{Previous_studies}). 
However, the case when $p$ is a positive real number has not been considered. 
We are particularly interested in the case of $0<p<1$ because then the oscillatory integrals exist even though the phases have no critical point and give ``moderate oscillations.'' 
When we show the existence of an oscillatory integral, we usually use integration by parts after dividing the integral by a cut-off function around the critical point of the phase. 
If $p$ is a positive real number, since the integrals are performed on the half-line $(0,\infty)$, when using the integration by parts, it becomes necessary to note the convergence of the improper integral and the limit of the boundary value at the left endpoint in the half-line $(0,\infty)$. 
We also consider the oscillatory integrals for conditions $\tau \in \mathbb{R}$ and $-1 \leq \delta < p-1$ in the amplitude class $\mathcal{A}^{\tau}_{\delta}(\mathbb{R})$ (Definition \ref{A_tau_delta}) which are wider than the typical ones $\tau \in \mathbb{N}$ and $\delta = -1$. 
These points are different from the standard argument. 

For the aim above, we first extend an exponent $p$ in the phase function $\phi(x) = x^{p}$ from $2$ to any positive real number in one variable. 
The key idea is a generalization of Fresnel integrals \eqref{Fresnel_integrals}. 
As to proofs of Fresnel integrals, 
several ways are known, for example \cite{Sugiura} I p.326, II p.85, 245, etc. 
In the proofs, we especially focus on the way of applying Cauchy's integral theorem 
to a holomorphic function $e^{-iz^{2}}$ on the domain with a fan of the center at the origin of Gaussian plane as a boundary (\cite{Ito-Komatsu} p.23). 
By changing the fan used in the proof with a holomorphic function $e^{-iz^{p}}z^{q-1}$ as an integrand, 
we can generalize Fresnel integrals for $p>q>0$ in the following way: 
\begin{align} 
I_{p,q}^{\pm} 
:= \int_{0}^{\infty} e^{\pm ix^{p}} x^{q-1} dx 
= p^{-1} e^{\pm i\frac{\pi}{2} \frac{q}{p}} \varGamma \left( \frac{q}{p} \right) 
\notag 
\end{align} 
(Lemma \ref{Generalized Fresnel integrals}), where $\varGamma$ is the Gamma function. 
As to the case of $p > 0$ and $q > 0$, 
by making senses of these integrals via oscillatory integrals, 
we obtain 
\begin{align} 
\tilde{I}_{p,q}^{\pm} 
:= Os\mbox{-}\int_{0}^{\infty} e^{\pm ix^{p}} x^{q-1} dx 
= p^{-1} e^{\pm i\frac{\pi}{2} \frac{q}{p}} \varGamma \left( \frac{q}{p} \right). 
\notag 
\end{align} 
Moreover these are extended to meromorphic functions on $\mathbb{C}$ by analytic continuation. 
Then we call $\tilde{I}^{\pm}_{p,q}$ ``generalized Fresnel integrals'' (Theorem \ref{th01}). 

Generalized Fresnel integrals can also be obtained by substituting $z = q/p$ and $\xi = 1$ after using the change of variable $t=x^{p}$ on the well-known formulas: 
\begin{align} 
\lim_{\varepsilon \to +0} \int_{0}^{\infty} e^{\pm i\xi t} t^{z-1} e^{-\varepsilon t} dt 
= e^{\pm i \frac{\pi}{2} z} \varGamma (z) \xi^{-z} 
\label{Hormander02_p_167_Example_7_1_17} 
\end{align} 
for $\xi > 0$ (\cite{Hormander02} p.167 Example 7.1.17) which are used in obtaining the Fourier transform and the inverse Fourier transform of a homogeneous distribution $t_{+}^{z-1}$ of degree $z-1$ where $\Re z > 0$. 
However, the formulas that corresponds to generalized Fresnel integrals are not even written there. 

We write the formulas of generalized Fresnel integrals explicitly and clarify when they hold in the senses of improper integrals or oscillatory integrals as stated above. 

By using generalized Fresnel integrals, 
we can obtain asymptotic expansions of oscillatory integrals with concrete formulas for each term. 

First, suppose $a \in \mathcal{A}^{\tau}_{\delta}(\mathbb{R})$. If $p>0$, then for any $N \in \mathbb{N}$, we have 
\begin{align} 
Os\text{-}\int_{\bold{0}}^{\infty} e^{\pm i\lambda x^{p}} a(x) dx 
&= \sum_{k=0}^{N-1} \tilde{I}_{p,k+1}^{\pm} \frac{a^{(k)}(0)}{k!} \lambda ^{-\frac{k+1}{p}} + O\Big( \lambda^{-\frac{N+1-(p-[p])}{p}} \Big) 
\label{unit} 
\end{align} 
as $\lambda \to \infty$. It is new to show that this expansion holds even when $p$ is any positive real number close to $0$. 
If $m \in \mathbb{N}$, then for any $N \in \mathbb{N}$, we have 
\begin{align} 
Os\text{-}\int_{\bm{-\infty}}^{\infty} e^{\pm i\lambda x^{m}} a(x) dx 
= \sum_{k=0}^{N-1} c_{m,k}^{\pm} \frac{a^{(k)}(0)}{k!} \lambda ^{-\frac{k+1}{m}} + O\Big( \lambda^{-\frac{N+1}{m}} \Big) 
\label{the_unified_formula} 
\end{align} 
as $\lambda \to \infty$, 
with 
\begin{align} 
c_{m,k}^{\pm} 
:&= \tilde{I}_{m,k+1}^{\pm} + (-1)^{k} \tilde{I}_{m,k+1}^{\pm \pm^{m}}, 
\label{c_mk^pm03} 
\end{align} 
where 
\begin{align} 
\tilde{I}_{m,k+1}^{\pm \pm^{m}} 
:= Os\mbox{-}\int_{0}^{\infty} e^{\pm (-1)^{m} iy^{m}} y^{k} dy 
= m^{-1} e^{\pm (-1)^{m} i\frac{\pi}{2} \frac{k+1}{m}} \varGamma \left( \frac{k+1}{m} \right) 
\notag 
\end{align} 
(Theorem \ref{th02}). 
Moreover if $m=2l-1$ and $m=2l$ for $l \in \mathbb{N}$, then for any $N \in \mathbb{N}$, we have 
\begin{align} 
&Os\text{-}\int_{-\infty}^{\infty} e^{\pm i\lambda x^{2l-1}} a(x) dx 
= \frac{2}{2l-1} \sum_{k=0}^{N-1} \left\{ \cos \frac{\pi (2k+1)}{2(2l-1)} \varGamma \left( \frac{2k+1}{2l-1} \right) \frac{a^{(2k)}(0)}{(2k)!} \lambda ^{-\frac{2k+1}{2l-1}} \right. \notag \\ 
&\hspace{2.75cm}\left. \pm i \sin \frac{\pi (2k+2)}{2(2l-1)} \varGamma \left( \frac{2k+2}{2l-1} \right) \frac{a^{(2k+1)}(0)}{(2k+1)!} \lambda ^{-\frac{2k+2}{2l-1}} \right\} + O\Big( \lambda ^{-\frac{N+1}{2l-1}} \Big), \label{odd_formula} \\ 
&Os\text{-}\int_{-\infty}^{\infty} e^{\pm i\lambda x^{2l}} a(x) dx 
= \frac{1}{l} \sum_{k=0}^{N-1} e^{\pm i\frac{\pi}{2} \frac{2k+1}{2l}} \varGamma \left( \frac{2k+1}{2l} \right) \frac{a^{(2k)}(0)}{(2k)!} \lambda ^{-\frac{2k+1}{2l}} + O\Big( \lambda ^{-\frac{N}{l}} \Big) \label{even_formula} 
\end{align} 
as $\lambda \to \infty$ (Corollary \ref{cor01}). 
These formulas are written in \cite{Hormander02} as the Equations (7.7.30) and (7.7.31) if $a(x) \in C^{\infty}_{0}(\mathbb{R})$: \begin{align} 
&\int_{-\infty}^{\infty} e^{i\lambda x^{2l-1}} a(x) dx 
= \frac{2}{2l-1} \sum_{k=0}^{N-1} \left\{ \sin \frac{(2l-2)(k+1) \pi}{2l-1} \right\} \varGamma \left( \frac{k+1}{2l-1} \right) \frac{(-1)^{k} a^{(k)}(0)}{k!} \notag \\ 
&\hspace{3.5cm}\times \lambda^{-\frac{k+1}{2l-1}} + O\Big( \lambda^{-\frac{N+1}{2l-1}} \Big), 
\label{Hormander02_Equations_7_7_30} \\ 
&\int_{-\infty}^{\infty} e^{i\lambda x^{2l}} a(x) dx 
= \frac{1}{l} \sum_{k=0}^{N-1} e^{i \frac{\pi}{2} \frac{2k+1}{2l}} \varGamma \left( \frac{2k+1}{2l} \right) \frac{a^{(2k)}(0)}{(2k)!} \lambda^{-\frac{2k+1}{2l}} 
+ O\Big( \lambda^{-\frac{2N+1}{2l}} \Big) 
\label{Hormander02_Equations_7_7_31} 
\end{align} 
as $\lambda \to \infty$ in our notation. 
However, the unified formula of \eqref{Hormander02_Equations_7_7_30} and \eqref{Hormander02_Equations_7_7_31} corresponding to \eqref{the_unified_formula} is not written and the relation between \eqref{Hormander02_p_167_Example_7_1_17} and \eqref{Hormander02_Equations_7_7_30},  \eqref{Hormander02_Equations_7_7_31} is not mentioned either there. 
We indicate the unified formula \eqref{the_unified_formula} deduces \eqref{odd_formula} and \eqref{even_formula}. 
Thus considering a positive exponent $p$ in real number makes it possible to reconsider the well-known formulas from a unified perspective, which makes it easier to extend the unified formula to multivariable cases. 
We also note that our remainder term is more accurate than one of the formula by $1/m$ order for even $m$ as above. 

Moreover considering a positive exponent $p$ in real number allows a wider change of variable. 
As the applications, the phase function $x^{p}$ in \eqref{unit} can be extended to $x^{p} (1 + \sum_{j=1}^{\infty} a_{j} x^{j})$ such that $l_{0} := \sup_{j \in \mathbb{N}} |a_{j}|^{1/j} < \infty$. 
In fact, let $U$ be a connected open neighborhood of the origin in $\mathbb{R}$ such that $U \subset (-R_{0},R_{0})$ where $R_{0} := (2l_{0})^{-1}$ and $R_{0}=\infty$ if $l_{0}=0$. 
Then there exists a diffeomorphism $x = \varPhi (y)$ of class $C^{\infty}$ for $x,y \in U$ such that $x (1 + \sum_{j=1}^{\infty} a_{j} x^{j})^{1/p} = y$, $\varPhi (U \cap [0,\infty)) = U \cap [0,\infty)$ and $\varPhi (U \cap (-\infty,0]) = U \cap (-\infty,0]$, and for any $a \in C^{\infty}_{0}(\mathbb{R})$ such that $\mathrm{supp}~a \subset U$ and for any $N \in \mathbb{N}$, 
\begin{align} 
&\int_{0}^{\infty} e^{\pm i\lambda x^{p} \left( 1 + \sum_{j=1}^{\infty} a_{j} x^{j} \right)} a(x) dx \notag \\ 
&= \sum_{k=0}^{N-1} 
\frac{\tilde{I}_{p,k+1}^{\pm}}{k!} \left( \frac{d}{dy} \right)^{k} \bigg|_{y=0} \left\{ a(\varPhi (y)) \frac{d\varPhi}{dy}(y) \right\} \lambda ^{-\frac{k+1}{p}} 
+ O\left( \lambda ^{-\frac{N+1-(p-[p])}{p}} \right) 
\notag 
\end{align} 
as $\lambda \to \infty$ (Theorem \ref{analytic_phase} (i)). 
If $m \in \mathbb{N}$, then for any $N \in \mathbb{N}$, 
\begin{align} 
&\int_{-\infty}^{\infty} e^{\pm i\lambda x^{m} \left( 1 + \sum_{j=1}^{\infty} a_{j} x^{j} \right)} a(x) dx \notag \\ 
&= \sum_{k=0}^{N-1} 
\frac{c_{m,k}^{\pm -}}{k!} \left( \frac{d}{dy} \right)^{k} \bigg|_{y=0} \left\{ a(\varPhi (y)) \frac{d\varPhi}{dy}(y) \right\} \lambda ^{-\frac{k+1}{m}} + O\left( \lambda ^{-\frac{N+1}{m}} \right) 
\notag 
\end{align} 
as $\lambda \to \infty$, where 
\begin{align} 
c_{m,k}^{\pm -} 
:&= \tilde{I}_{m,k+1}^{\pm} - (-1)^{k} \tilde{I}_{m,k+1}^{\pm \pm^{m}} 
\notag 
\end{align} 
(Theorem \ref{analytic_phase} (ii)). 
Here, when $a_{j}=1/j!$, by using the Lambert W function $X=W_{0}(Y)$, 
we can write $\varPhi (y) = pW_{0} \big( \frac{y}{p} \big)$ and $\frac{d\varPhi}{dy}(y) = \frac{dW_{0}}{dY} \big( \frac{y}{p} \big)$ %, 
(Corollary \ref{Lambert_W_function}). 

As to applications in the case of multivariable, 
we obtain the asymptotic expansions of oscillatory integrals for $a \in C^{\infty}_{0}(\mathbb{R}^{n})$. 
If $p:=(p_{1},\dots,p_{n}) \in (0,\infty)^{n}$, then for any $N \in \mathbb{N}$, we have 
\begin{align} 
&\int_{(0,\infty)^{n}} e^{i\lambda \sum_{j=1}^{n} \pm_{j} x_{j}^{p_{j}}} a(x) dx \notag \\ 
&= \sum_{\alpha \in \Omega_{p}^{N}} \prod_{j=1}^{n} \tilde{I}^{\pm_{j}}_{p_{j},\alpha_{j}+1} \frac{\partial_{x}^{\alpha} a(0)}{\alpha!} \lambda^{-\sum_{j=1}^{n} \frac{\alpha_{j}+1}{p_{j}}} + O \Big( \lambda^{-\frac{N+1-\max (p_{j}-[p_{j}])}{\max p_{j}}} \Big) 
\notag 
\end{align} 
as $\lambda \to \infty$, 
where $\pm_{j}$ represents ``$+$" or ``$-$" for each $j$, and 
\begin{align} 
\Omega_{p}^{N} 
:= \bigg\{ \alpha = (\alpha_{1},\dots,\alpha_{n}) \in \mathbb{Z}_{\geq 0}^{n} \bigg| \sum_{j=1}^{n} \frac{\alpha_{j}+1}{p_{j}} < \frac{N+1-\max (p_{j}-[p_{j}])}{\max p_{j}} \bigg\}. 
\notag 
\end{align} 
If $m:=(m_{1},\dots,m_{n}) \in \mathbb{N}^{n}$, then for any $N \in \mathbb{N}$, we have 
\begin{align} 
\int_{\mathbb{R}^{n}} e^{i\lambda \sum_{j=1}^{n} \pm_{j} x_{j}^{m_{j}}} a(x) dx 
&= \sum_{\alpha \in \Omega_{m}^{N}} \prod_{j=1}^{n} c^{\pm_{j}}_{m_{j},\alpha_{j}} \frac{\partial_{x}^{\alpha} a(0)}{\alpha!}  \lambda^{-\sum_{j=1}^{n} \frac{\alpha_{j}+1}{m_{j}}} + O \Big( \lambda^{-\frac{N+1}{\max m_{j}}} \Big) 
\notag 
\end{align} 
as $\lambda \to \infty$, 
with 
\begin{align} 
c^{\pm_{j}}_{m_{j},\alpha_{j}} 
:&= \tilde{I}_{m_{j},\alpha_{j}+1}^{\pm_{j}} + (-1)^{\alpha_{j}} \tilde{I}_{m_{j},\alpha_{j}+1}^{\pm_{j} \pm^{m_{j}}} 
\notag 
\end{align} 
(Theorem \ref{multivariable}). 
These results include the cases of $A_{k}$, $E_6$, $E_8$-phase functions: 
\begin{align} 
&A_{k} : \pm_{1} x_{1}^{k+1} + \sum_{j=2}^{n} \pm_{j} x_{j}^{2}, & 
&E_{6} : x_{1}^{3} \pm_{2} x_{2}^{4} + \sum_{j=3}^{n} \pm_{j} x_{j}^{2}, & 
&E_{8} : x_{1}^{3} + x_{2}^{5} + \sum_{j=3}^{n} \pm_{j} x_{j}^{2}, 
\notag 
\end{align} 
where $k \in \mathbb{N}$ (\cite{Arnold01}, \cite{AGV}). 
These cases are considered in \cite{Duistermaat02}. 
However, the concrete expression of the asymptotic expansion in each case is not showed there. 
We have also clarified how the principal part is determined for the order of the remainder term in multivariate asymptotic expansions by defining its index set $\Omega_{p}^{N}$. This is a new result. 

Thus considering oscillatory integrals with phase functions of positive real powers enables us accurately to represent the coefficients of each term in the asymptotic expansions of oscillatory integrals decomposed on the half-line by using generalized Fresnel integrals. 
Hence we can expect to obtain more concrete and detailed results. 
We think it is desirable to extend the range of applications for such computable cases while comparing with the results using resolution of singularities.

For the aim above, 
after referring to earlier studies on asymptotic expansions of oscillatory integrals in \S \ref{Previous_studies}, 
we first define the oscillatory integrals discussed in this paper in \S \ref{Preliminary}. 

In \S \ref{section_Existence of oscillatory integrals}, we show the existence of the oscillatory integrals. 

In \S \ref{section_Generalized_Fresnel_Integrals}, we extend Fresnel integrals 
by changing of a path for integration 
in the well-known proof using Cauchy's integral theorem. 
Then, according to oscillatory integrals, 
we also obtain further generalization of Fresnel integrals. 

Furthermore, in \S \ref{Applications_to_asymptotic expansions}, 
according to generalized Fresnel integrals, we establish 
the asymptotic expansions of oscillatory integrals with phase functions of positive real powers 
in one variable and multivariable. 

To the end of \S \ref{Introduction}, we note notation used in this paper: 

$\alpha = (\alpha_{1},\dots,\alpha_{n}) \in \mathbb{Z}_{\geq 0}^{n}$ 
is a multi-index with a length 
$| \alpha | = \alpha_{1} + \cdots + \alpha_{n}$, 
and then, we use 
$x^{\alpha} = x_{1}^{\alpha_{1}} \cdots x_{n}^{\alpha_{n}}$, 
$\alpha! = \alpha_{1}! \cdots \alpha_{n}!$, 
$\partial_{x}^{\alpha} = \partial_{x_{1}}^{\alpha_{1}} \cdots \partial_{x_{n}}^{\alpha_{n}}$ 
and $D_{x}^{\alpha} = D_{x_{1}}^{\alpha_{1}} \cdots D_{x_{n}}^{\alpha_{n}}$, 
where $\partial_{x_{j}} = \frac{\partial}{\partial x_{j}}$ 
and $D_{x_{j}} = i^{-1} \partial_{x_{j}}$ for $x = (x_{1}, \dots, x_{n})$. 

$C^{\infty}(\mathbb{R}^{n})$ is the set of complex-valued functions of class $C^{\infty}$ defined on $\mathbb{R}^{n}$. 
$C^{\infty}_{0}(\mathbb{R}^{n})$ is the set of $f \in C^{\infty}(\mathbb{R}^{n})$ with compact support. 
$\mathcal{S}(\mathbb{R}^{n})$ is the Schwartz space of rapidly decreasing functions of class $C^{\infty}$ defined on $\mathbb{R}^{n}$, 
that is, the Fr\'{e}chet space of $f \in C^{\infty}(\mathbb{R}^{n})$ 
such that $\sup_{x \in \mathbb{R}^{n}} \langle x \rangle^{k} | \partial_{x}^{\alpha} f (x) | < \infty$ for any $k \in \mathbb{Z}_{\geq 0}$ and for any multi-index $\alpha \in \mathbb{Z}_{\geq 0}^{n}$ where $\langle x \rangle := (1+|x|^{2})^{1/2}$. 

$\varGamma$ is the Gamma function. 

$[x]$ is the Gauss' symbol for $x \in \mathbb{R}$, that is, $[x] \in \mathbb{Z}$ such that $x-1 < [x] \leq x$. 

$[x)$ is the greatest integer smaller than real number $x$, that is, $x-1 \leq [x) < x$. 

$O$ means the Landau's symbol, that is, 
$f(x) = O(g(x))~(x \to a)$ if $|f(x)/g(x)|$ is bounded on $\{ x \in \mathbb{R} | 0<|x-a|<\varepsilon \}$ for some $\varepsilon > 0$ if $a \in \mathbb{R}$, on $(c,+\infty)$ for some $c \in \mathbb{R}$ if $a = \infty$, or on $(-\infty,c)$ for some $c \in \mathbb{R}$ if $a = -\infty$, for complex-valued functions $f$ and $g$ defined on $D \subset \mathbb{R} \cup \{ \pm \infty \}$ and $a \in \bar{D}$. 

$\delta_{ij}$ is the Kronecker's delta, that is, $\delta_{ii} = 1$, and $\delta_{ij} = 0$ if $i \ne j$. 

$t^{+} := \max \{ t,0 \}$ for $t \in \mathbb{R}$. 

The double sign $\pm$'s are in the same order. 

$\pm^{m}$ stands for a sign of $(-1)^{m}$ for $m \in \mathbb{N}$, that is, $(-1)^{m} = \pm^{m} 1$. 

$\pm_{j}$ represents ``$+$" or ``$-$" for each $j \in \mathbb{N}$. 

\section{Earlier studies} 
\label{Previous_studies} 

In this section, we shall refer to earlier studies on asymptotic expansions of oscillatory integrals. 

When a phase function has a non-degenerate critical point, then the method of stationary phase holds as follows (\cite{Hormander01}, \cite{Hormander02}, \cite{Fujiwara3}, etc.): 
\begin{thm} 
\label{stationary_phase_method} 
Let $\phi$ be a real-valued function of class $C^{\infty}$ defined on $\mathbb{R}^{n}$ 
with a non-degenerate critical point $\bar{x}$ 
and $a \in C^{\infty}_{0}(\mathbb{R}^{n})$. 
Then 
there exist neighborhoods $V$ of $\bar{x}$ 
and $W$ of the origin in $\mathbb{R}^{n}$, 
and diffeomorphism 
$x=\varPhi (y)$ of class $C^{\infty}$ 
for $x \in V$ and $y \in W$, 
and for any $N \in \mathbb{N}$, 
\begin{align} 
&\int_{\mathbb{R}^{n}} e^{i\lambda \phi (x)} a(x) dx 
= (2\pi)^{\frac{n}{2}} \frac{e^{i \frac{\pi}{4} \mathrm{sgn} \mathrm{Hess} \phi (\bar{x})}}{|\det \mathrm{Hess} \phi (\bar{x})|^{\frac{1}{2}}} e^{i\lambda \phi(\bar{x})} \notag \\ 
&\times \sum_{k=0}^{N-1} \frac{1}{k!} \Big( -i\frac{1}{2} \langle \mathrm{Hess} \phi (\bar{x})^{-1} D_{y},D_{y} \rangle \Big)^{k} 
\bigg|_{y=0} a(\varPhi(y)) J_{\varPhi}(y) \lambda^{-k-\frac{n}{2}} + O\left( \lambda^{-N-\frac{n}{2}} \right) 
\notag 
\end{align} 
as $\lambda \to \infty$, 
where $\mathrm{Hess} \phi (\bar{x}) := (\partial ^{2} \phi (\bar{x})/\partial x_{i} \partial x_{j})_{i,j=1,\dots,n}$ is the Hessian matrix of $\phi$ at $\bar{x}$ with ``$p$" positive and ``$n-p$" negative eigenvalues, 
$\mathrm{sgn} \mathrm{Hess} \phi (\bar{x}) := p-(n-p)$, 
and $J_{\varPhi}(y)$ is the Jacobian of $\varPhi$ at $y \in W$. 
\end{thm}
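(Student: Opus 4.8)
The plan is to reduce the general non-degenerate phase to a pure quadratic model by localization and the Morse lemma, and then to read off the expansion from the (multivariate) Fresnel integral. First I would localize. Since $a \in C^{\infty}_{0}(\mathbb{R}^{n})$, choose $\chi \in C^{\infty}_{0}(\mathbb{R}^{n})$ equal to $1$ near $\bar{x}$ and supported in a neighborhood $V$ of $\bar{x}$ on which $\bar{x}$ is the only critical point of $\phi$, and split $a = \chi a + (1-\chi) a$. On the support of $(1-\chi)a$ we have $\nabla \phi \neq 0$, so repeated integration by parts with the transpose of $L := (i\lambda |\nabla \phi|^{2})^{-1} \langle \nabla \phi, \nabla \rangle$ shows that this piece contributes $O(\lambda^{-M})$ for every $M$. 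Hence it is harmless to the asymptotic series, and I may assume $\mathrm{supp}\, a \subset V$.

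Next I would apply the Morse lemma. As $\mathrm{Hess}\,\phi(\bar{x})$ is non-degenerate, there are neighborhoods $V$ of $\bar{x}$ and $W$ of $0$ and a $C^{\infty}$ diffeomorphism $x = \varPhi(y)$, $\varPhi(0)=\bar{x}$, with
\[ \phi(\varPhi(y)) = \phi(\bar{x}) + \tfrac{1}{2} \langle A y, y \rangle, \qquad A := \mathrm{Hess}\,\phi(\bar{x}). \]
Changing variables turns the localized integral into $e^{i\lambda \phi(\bar{x})} \int_{W} e^{i\frac{\lambda}{2}\langle Ay,y\rangle} b(y)\, dy$ with $b(y) := a(\varPhi(y)) J_{\varPhi}(y) \in C^{\infty}_{0}(\mathbb{R}^{n})$; the chosen normal form only affects the eventual constant through $|\det A|$ and the signature, so any standard version of the lemma suffices.

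The analytic core is the quadratic model. Diagonalizing $A$ by an orthogonal map and applying the one-variable Fresnel integral (the case $p=2$ of Lemma \ref{Generalized Fresnel integrals}) in each coordinate gives, for $\eta \in \mathbb{R}^{n}$,
\[ \int_{\mathbb{R}^{n}} e^{i\frac{\lambda}{2}\langle Ay,y\rangle} e^{i\langle y,\eta\rangle}\, dy = \Big(\tfrac{2\pi}{\lambda}\Big)^{\frac{n}{2}} \frac{e^{i\frac{\pi}{4}\mathrm{sgn}\,A}}{|\det A|^{\frac{1}{2}}}\, e^{-\frac{i}{2\lambda}\langle A^{-1}\eta,\eta\rangle}. \]
Writing $b$ through its Fourier transform, inserting this identity, and expanding $e^{-\frac{i}{2\lambda}\langle A^{-1}\eta,\eta\rangle}$ as a Taylor polynomial of degree $N-1$ in $\lambda^{-1}$ with integral remainder, each power $\langle A^{-1}\eta,\eta\rangle^{k}$ becomes the operator $\langle A^{-1}D_{y},D_{y}\rangle^{k}$ acting on $b$ and evaluated at $y=0$ (since $D_{y} e^{i\langle y,\eta\rangle} = \eta\, e^{i\langle y,\eta\rangle}$). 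Collecting the factors $\lambda^{-n/2}$ and $\lambda^{-k}$ and restoring $e^{i\lambda\phi(\bar{x})}$ yields exactly the claimed formula, while the remainder is $\lambda^{-N-n/2}$ times a finite integral of $\langle A^{-1}\eta,\eta\rangle^{N}|\hat{b}(\eta)|$, giving the stated $O(\lambda^{-N-n/2})$. Extending the $y$-integration from $W$ to $\mathbb{R}^{n}$ costs only $O(\lambda^{-M})$ for every $M$, again by non-stationary phase, since the quadratic phase has no critical point off the origin.

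The main obstacle is the quantitative control in this last step: making the passage through the Fourier transform rigorous and, above all, bounding the Taylor remainder uniformly in $\lambda$ so that it is genuinely of order $\lambda^{-N-n/2}$ rather than merely $o$ of the last retained term. The Morse lemma, though geometrically the heart of the reduction, is classical; it is the uniform error estimate for the quadratic model, together with the bookkeeping that identifies the differential-operator form of the coefficients, that demands the most care.
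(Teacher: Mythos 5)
The paper offers no proof of this theorem to compare against: it is stated in \S\ref{Previous_studies} purely as classical background, with a pointer to \cite{Hormander01}, \cite{Hormander02}, \cite{Fujiwara3}. Judged on its own terms, your sketch is the standard argument (essentially Theorems 7.7.5--7.7.6 of \cite{Hormander02}) and the main line is sound: localization and non-stationary phase, the Morse lemma in the normal form $\phi(\varPhi(y))=\phi(\bar{x})+\frac{1}{2}\langle Ay,y\rangle$ with $A=\mathrm{Hess}\,\phi(\bar{x})$ and $\varPhi'(0)=\mathrm{id}$, the exact Fourier-multiplier identity for the imaginary Gaussian, and the Taylor expansion of $e^{-\frac{i}{2\lambda}\langle A^{-1}\eta,\eta\rangle}$ with remainder bounded by a constant times $\lambda^{-N}\int|\eta|^{2N}|\hat{b}(\eta)|\,d\eta<\infty$; this does reproduce the coefficients $\frac{1}{k!}\big(-\frac{i}{2}\langle A^{-1}D_{y},D_{y}\rangle\big)^{k}b(0)$ exactly. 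Two caveats. Your remark that ``any standard version of the Morse lemma suffices'' is too loose: the displayed coefficients are tied to the specific normal form $\frac{1}{2}\langle Ay,y\rangle$; with the diagonalized form $\sum_{j}\pm y_{j}^{2}$ you get a different $\varPhi$ and different term-by-term coefficients (both are valid expansions, but only one matches the formula as written). More importantly, the disposal of $(1-\chi)a$ by repeated integration by parts needs $\nabla\phi\neq 0$ on all of $\mathrm{supp}\,a$ outside the chosen neighborhood, i.e.\ that $\bar{x}$ is the only critical point of $\phi$ on $\mathrm{supp}\,a$. That hypothesis is present in the classical statements but has been dropped from the statement as transcribed here, and without it the conclusion is false (a second non-degenerate critical point contributes its own terms of order $\lambda^{-n/2}$); your proof silently restores it, which is what the statement actually requires.
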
 

In order to describe the case when a phase function has a degenerate critical point, 
according to typical notation, 
we consider an oscillatory integral 
\begin{align} 
I_{f}[\varphi](\tau) 
:= \int_{\mathbb{R}^{n}} e^{i \tau f(x)} \varphi(x) dx 
\notag 
\end{align} 
under the following assumptions:  
\begin{assump} 
\label{assumption01} 
Assume that $f$ and $\varphi$ satisfy the the following assumptions: 
\begin{align} 
\rm{(1)}~&\text{$f$ is a real-valued function of class $C^{\infty}$ defined on an open neighborhood $U$} \notag \\  
&\text{of the origin in $\mathbb{R}^{n}$ satisfying $f(0) = 0$ and $\nabla f(0) = 0$.} \notag \\ 
\rm{(2)}~&\text{$\varphi \in C^{\infty}_{0}(\mathbb{R}^{n})$ satisfies $\mathrm{supp} \varphi \subset U$.} \notag 
\end{align} 
\end{assump} 

First, we shall recall Newton polyhedra and the related notions. 
\begin{defn} 
Suppose that $f = \sum_{\alpha \in \mathbb{Z}_{\geq 0}^{n}} c_{\alpha} x^{\alpha}$ is a formal power series. 
Then 
Newton polyhedron and Newton diagram of $f$ are defined by 
\begin{align} 
&\Gamma_{+}(f) := \bigg( \text{the convex hull of $\bigcup_{c_{\alpha} \ne 0} (\alpha + \mathbb{R}_{\geq 0}^{n})$} \bigg), \notag \\ 
&\Gamma(f) := (\text{the union of compact faces of $\Gamma_{+}(f)$}) \big) 
\notag 
\end{align} 
respectively, where a set $\gamma$ is called face of $\Gamma_{+}(f)$ if there exists $(a,l) \in \mathbb{Z}_{\geq 0}^{n} \times \mathbb{Z}_{\geq 0}$such that $\gamma = \Gamma_{+}(f) \cap \partial H_{+}(a,l)$ and $\Gamma_{+}(f) \subset H_{+}(a,l) := \{ x \in \mathbb{R}^{n} | \langle a,x \rangle \geq l \}$. 

The principal part of $f$ is defined by $f_{\Gamma(f)} := \sum_{\alpha \in \Gamma(f)} c_{\alpha} x^{\alpha}$, 
The $\gamma$-part of $f$ is defined by $f_{\gamma} := \sum_{\alpha \in \gamma} c_{\alpha} x^{\alpha}$ for face $\gamma$ of $\Gamma_{+}(f)$, 
and $f_{\Gamma(f)}$ is said to be non-singular for $\Gamma(f)$ 
if for any closed face $\gamma$ in $\Gamma(f)$, the $\gamma$-part $f_{\gamma}$ satisfy $\nabla f_{\gamma} \ne 0$ for $x_{1} \cdots x_{n} \ne 0$. 
\end{defn} 

When $f$ is analytic and has a Taylor series with the non-singular principal part at the origin under the Assumption \ref{assumption01}, 
Var\v{c}henko \cite{Var} has constructed resolution of singularity of $f$ by using the toric resolution, 
and has obtained the asymptotic expansions of oscillatory integrals: 
\begin{thm}
\label{Varchenko_theorm} 
Suppose that $f$ is a real analytic function expressed by Taylor series $f = \sum_{\alpha \in \mathbb{Z}_{\geq 0}^{n}} c_{\alpha} x^{\alpha}$ at the origin under the Assumption \ref{assumption01}. 
If the principal part $f_{\Gamma(f)} := \sum_{\alpha \in \Gamma(f)} c_{\alpha} x^{\alpha}$ of $f$ is non-singular for $\Gamma(f)$, 
then the following oscillatory integral has an asymptotic expansion of the form 
\begin{align} 
I_{f}[\varphi](\tau) 
:= \int_{\mathbb{R}^{n}} e^{i \tau f(x)} \varphi(x) dx 
\sim e^{i \tau f(0)} \sum_{p} \sum_{k=0}^{n-1} c_{p,k}(\varphi) \tau^{p} (\log \tau)^{k} 
\label{Varchenko_expansion} 
\end{align} 
$\tau \to \infty$, where $p$ runs through a finite set of arithmetic progressions independent of $\varphi$ whose terms are negative rational numbers determined by the toric resolution of $f$. 
\end{thm}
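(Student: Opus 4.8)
The plan is to follow Var\v{c}henko's strategy \cite{Var}: localize the integral near the degenerate critical point, monomialize $f$ by a toric resolution adapted to its Newton polyhedron, and then read off the exponents $p$ and the logarithmic orders $k$ from the resulting monomial model integrals. First I would use a smooth partition of unity to split $\varphi$ into a piece supported in an arbitrarily small neighborhood of the origin and a remainder. By Assumption \ref{assumption01} we have $f(0)=0$ and $\nabla f(0)=0$, and after shrinking $U$ we may assume the origin is the only critical point of $f$ in $\mathrm{supp}\,\varphi$. On the support of the remainder $\nabla f \neq 0$, so repeated integration by parts against the operator $\langle \nabla f, \nabla \rangle/(i\tau |\nabla f|^{2})$ shows that this contribution is $O(\tau^{-M})$ for every $M$. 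Hence the entire asymptotic expansion is produced by an arbitrarily small neighborhood of the origin, where $f$ coincides with its convergent Taylor series, and since $f(0)=0$ the prefactor $e^{i\tau f(0)}$ is trivial.

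The geometric heart of the proof is the construction of a proper modification $\pi:Y\to U$ obtained from a simplicial subdivision of the dual fan of the Newton polyhedron $\Gamma_{+}(f)$. The hypothesis that the principal part $f_{\Gamma(f)}$ be non-singular for $\Gamma(f)$, that is, $\nabla f_{\gamma}\neq 0$ off the coordinate hyperplanes for every compact face $\gamma$, is exactly what forces $\pi$ to resolve $f$ itself rather than merely its principal part: in each affine chart of $Y$ one obtains $f\circ\pi(y)=y^{a}u(y)$ with $a\in\mathbb{Z}_{\geq 0}^{n}$ and $u$ smooth and nonvanishing, while the Jacobian factors simultaneously as $J_{\pi}(y)=y^{b}v(y)$ with $v$ nonvanishing. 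Thus $f\circ\pi$ together with the pulled-back density is brought into normal-crossings (monomial) form.

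Pulling back and inserting a partition of unity subordinate to the charts reduces $I_{f}[\varphi](\tau)$ to a finite sum of model integrals of the shape $\int e^{i\tau y^{a}u(y)}\,\psi(y)\,y^{b}\,dy$. I would extract their asymptotics through the associated local zeta function $Z(s):=\int |f(x)|^{s}\varphi(x)\,dx$, whose meromorphic continuation is computed chart by chart after the monomialization: in a monomial chart its poles lie at $s=-(b_{j}+1+\ell)/a_{j}$ for $\ell\in\mathbb{Z}_{\geq 0}$ and $j=1,\dots,n$. By the Mellin-transform and Tauberian correspondence of Jeanquartier and Malgrange \cite{Jeanquartier}, \cite{Malgrange}, a pole of $Z$ at $s=p$ of order $k+1$ contributes a term $c_{p,k}(\varphi)\,\tau^{p}(\log\tau)^{k}$ to the expansion. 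Since for each chart the candidate exponents form finitely many arithmetic progressions $\{-(b_{j}+1+\ell)/a_{j}\}_{\ell\geq 0}$, and since $a_{j}\geq 1$ together with the vanishing of $f$ to order at least $2$ forces them to be negative rationals, the set of $p$ is a finite union of such progressions; the pole order exceeds $1$ only when several of the ratios $(b_{j}+1+\ell)/a_{j}$ coincide, and as there are $n$ coordinates it is at most $n$, giving $0\leq k\leq n-1$. Crucially the exponents $a,b$ are fixed by the resolution, hence by the Newton data of $f$ alone, so the progressions of $p$ are independent of $\varphi$; only the residues $c_{p,k}(\varphi)$ depend on it.

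I expect the decisive and most technical step to be the construction and verification of the toric resolution itself: proving that non-singularity of the principal part guarantees the monomial form $f\circ\pi=y^{a}u$ with $u$ a unit in \emph{every} chart of the subdivision, and not merely along the strict transform of the principal part. Closely tied to this is the combinatorial bookkeeping of the resonances among the ratios $(b_{j}+1)/a_{j}$ across charts, which simultaneously governs the arithmetic-progression structure of the exponents and the maximal power of $\log\tau$ that can occur.
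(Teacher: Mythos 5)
The paper does not actually prove Theorem \ref{Varchenko_theorm}: it is quoted in \S\ref{Previous_studies} as known background, with the proof deferred entirely to Var\v{c}henko \cite{Var} (and the refinements in \cite{Kamimoto-Nose03}, \cite{Kamimoto-Nose04}), and your sketch is a faithful outline of exactly that argument --- localization by non-stationary phase, monomialization via a simplicial subdivision of the dual fan of $\Gamma_{+}(f)$ under the non-singularity hypothesis, and transfer of the poles of the associated zeta function to the exponents $p$ and the log-orders $k\leq n-1$. The only imprecision worth flagging is that for the \emph{oscillatory} integral one must meromorphically continue $f_{+}^{s}$ and $f_{-}^{s}$ (equivalently, work with the Gelfand--Leray fiber integrals as in \cite{Jeanquartier}, \cite{Malgrange}) separately rather than $|f|^{s}$ alone, since the two sides of $\{f=0\}$ enter the Mellin correspondence with different Gamma-function factors.
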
 

When $f$ has Taylor series $\sum_{\alpha \in \mathbb{Z}_{\geq 0}^{n}} c_{\alpha} x^{\alpha}$ at the origin under the Assumption \ref{assumption01}, 
Kamimoto-Nose \cite{Kamimoto-Nose03} have defined ``$f$ admits the $\gamma$-part'' on $U$ if for any $x \in U$ and for any $(a,l) \in \mathbb{Z}_{\geq 0}^{n} \times \mathbb{Z}_{\geq 0}$ with $a = (a_{1},\dots,a_{n})$ such that $\gamma = \Gamma_{+}(f) \cap \partial H_{+}(a,l)$, there exists $\lim_{t \to 0} t^{-l} f(t^{a_{1}} x_{1},\dots,t^{a_{n}} x_{n})$, 
which denoted by $f_{\gamma}$ and called ``the $\gamma$-part'' of $f$ on $U$. 
Then they have introduced the class 
\begin{align} 
\hat{\mathcal{E}}(U) 
:= \{ f \in C^{\infty}(\mathbb{R}^{n})~|~\text{$f$ admits the $\gamma$-part on $U$ for any face $\gamma$ in $\Gamma_{+}(f)$} \} 
\notag 
\end{align} 
including real analytic functions, 
and have indicated \eqref{Varchenko_expansion} holds for any $f \in \hat{\mathcal{E}}(U)$ with the non-degenerate principal part by constructing the toric resolution of singularities. 
Moreover when $f$ has Puiseux series $\sum_{\alpha \in \mathbb{Z}_{\geq 0}^{n}} c_{\alpha} x^{\alpha/p}$ at the origin where $\alpha/p := (\alpha_{1}/p_{1},\dots,\alpha_{n}/p_{n})$ for $p:=(p_{1},\dots,p_{n}) \in \mathbb{N}^{n}$ under the Assumption \ref{assumption01}, 
Kamimoto-Nose \cite{Kamimoto-Nose04} have defined the suitable class $\hat{\mathcal{E}}_{1/p}(U_{+})$ where $U_{+}:=U \cap [0,\infty)^{n}$ and indicate \eqref{Varchenko_expansion} holds for $I_{f}[\varphi](\tau) := \int_{[0,\infty)^{n}} e^{i \tau f(x)} \varphi(x) dx $. 

\section{Preliminary} 
\label{Preliminary} 

In this section, 
we define the oscillatory integrals discussed in this paper. 

First we shall recall a convergence factor, which is a family of functions with properties of Gaussian function $e^{-|x|^{2}}$. 
\begin{prop} 
\label{chi epsilon} 
Let $\chi \in \mathcal{S}(\mathbb{R}^{n})$ with $\chi(0) = 1$. 
Then 
\begin{enumerate} 
\item[(i)] 
$\chi(\varepsilon x) \to 1$ uniformly on any compact set in $\mathbb{R}^{n}$ as $\varepsilon \to +0$. 
\item[(ii)] 
For each multi-index $\alpha \in \mathbb{Z}_{\geq 0}^{n}$, 
there exists a positive constant $C_{\alpha}$ independent of $0 < \varepsilon <1 $ 
such that 
for any $x \in \mathbb{R}^{n}$, 
\begin{align} 
| \partial_{x}^{\alpha} (\chi(\varepsilon x)) | \leq C_{\alpha} \langle x \rangle^{-|\alpha|}. \notag 
\end{align} 
\item[(iii)] 
For any multi-index $\alpha \in \mathbb{Z}_{\geq 0}^{n}$ with $\alpha \ne 0$, 
$\partial_{x}^{\alpha} (\chi(\varepsilon x)) \to 0$ uniformly in $\mathbb{R}^{n}$ as $\varepsilon \to +0$. 
\end{enumerate} 
\end{prop}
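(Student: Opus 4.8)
The plan is to reduce all three statements to a single elementary identity together with the defining decay of Schwartz functions. The key observation is the chain rule identity $\partial_x^\alpha\big(\chi(\varepsilon x)\big) = \varepsilon^{|\alpha|}(\partial_x^\alpha\chi)(\varepsilon x)$, together with the fact that each derivative $\partial_x^\alpha\chi$ again lies in $\mathcal{S}(\mathbb{R}^n)$; in particular $\partial_x^\alpha\chi$ is bounded, and for the seminorm estimate it satisfies $\langle y\rangle^{|\alpha|}\,|(\partial_x^\alpha\chi)(y)| \le M_\alpha$ for some constant $M_\alpha$ depending only on $\chi$ and $\alpha$.

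For (i) I would argue directly from continuity without invoking the identity. On a compact set $K \subset \mathbb{R}^n$ choose $R$ with $K \subset \{|x|\le R\}$; then $|\varepsilon x| \le \varepsilon R \to 0$ uniformly on $K$ as $\varepsilon\to+0$. Since $\chi$ is continuous with $\chi(0)=1$, uniform continuity of $\chi$ on a bounded neighborhood of the origin upgrades the pointwise limit $\chi(\varepsilon x)\to\chi(0)=1$ to uniform convergence on $K$.

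For (ii) I would start from the chain rule identity and the Schwartz bound above, obtaining $|\partial_x^\alpha(\chi(\varepsilon x))| = \varepsilon^{|\alpha|}\,|(\partial_x^\alpha\chi)(\varepsilon x)| \le M_\alpha\,\varepsilon^{|\alpha|}\langle\varepsilon x\rangle^{-|\alpha|}$. The substance is then the elementary inequality $\varepsilon\,\langle\varepsilon x\rangle^{-1}\le\langle x\rangle^{-1}$ valid for $0<\varepsilon<1$, which follows from the identity $\varepsilon\,\langle\varepsilon x\rangle^{-1} = (\varepsilon^{-2}+|x|^2)^{-1/2}$ and the fact that $\varepsilon^{-2}>1$. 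Raising this to the power $|\alpha|$ yields the claim with $C_\alpha = M_\alpha$, uniformly in $0<\varepsilon<1$. This comparison of $\varepsilon^{|\alpha|}\langle\varepsilon x\rangle^{-|\alpha|}$ against $\langle x\rangle^{-|\alpha|}$ is the only genuinely nonobvious step, and it is where I expect the (minor) effort to lie.

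For (iii), when $\alpha\ne0$ the same identity gives $\sup_{x}|\partial_x^\alpha(\chi(\varepsilon x))| \le \varepsilon^{|\alpha|}\,\sup_y|(\partial_x^\alpha\chi)(y)|$, and the supremum on the right is finite because $\partial_x^\alpha\chi$ is bounded. Since $|\alpha|\ge1$, the prefactor $\varepsilon^{|\alpha|}\to0$ as $\varepsilon\to+0$, yielding uniform convergence to $0$ on all of $\mathbb{R}^n$. Overall there is no serious obstacle: the proposition is essentially a repackaging of the chain rule, the Schwartz seminorm estimates, and the single elementary inequality appearing in (ii).
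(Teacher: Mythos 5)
Your proof is correct and complete; all three parts follow from the chain rule identity $\partial_x^{\alpha}(\chi(\varepsilon x))=\varepsilon^{|\alpha|}(\partial^{\alpha}\chi)(\varepsilon x)$, the Schwartz seminorm bounds, and the inequality $\varepsilon\langle\varepsilon x\rangle^{-1}=(\varepsilon^{-2}+|x|^{2})^{-1/2}\leq\langle x\rangle^{-1}$ for $0<\varepsilon<1$, exactly as you argue. The paper gives no proof of its own but simply cites Kumano-go (p.~47), where the standard argument is the same as yours, so there is nothing to compare beyond noting agreement.
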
 

\begin{proof} 
See \cite{Kumano-go} p.47. 
\end{proof} 

Next we define the amplitude function class in one variable as follows: 
\begin{defn} 
\label{A_tau_delta} 
Assume that $p>0$. 
Let $\tau \in \mathbb{R}$ and $-1 \leq \delta < p-1$. 
We say that $a \in C^{\infty}(\mathbb{R})$ belongs to the class $\mathcal{A}^{\tau}_{\delta}(\mathbb{R})$ if 
for each $k \in \mathbb{Z}_{\geq 0}$, there exists a positive constant $C_{k}$ such that for any $x \in \mathbb{R}$, 
\begin{align} 
|a^{(k)}(x)| \leq C_{k} \langle x \rangle^{\tau + \delta k}. 
\notag  
\end{align} 
Then for any $l \in \mathbb{Z}_{\geq 0}$, we set 
\begin{align} 
|a|^{(\tau)}_{l} 
:= \max_{k=0,\dots,l} \sup_{x \in \mathbb{R}} \langle x \rangle^{-\tau - \delta k} |a^{(k)}(x)|. 
\label{|a|^(tau)_l} 
\end{align} 
Then for any $k=0,\dots,l$, 
\begin{align} 
|a^{(k)}(x)| \leq |a|^{(\tau)}_{l} \langle x \rangle^{\tau + \delta k}. 
\label{A_tau_delta_def02} 
\end{align} 
\end{defn} 
\begin{rem} \label{rem01} 
We see the following immediately: 
\begin{align} 
C^{\infty}_{0}(\mathbb{R}) \subset 
\mathcal{S}(\mathbb{R}) 
= \bigcap_{\tau \in \mathbb{R}} \bigcap_{-1 \leq \delta < p-1} \mathcal{A}^{\tau}_{\delta}(\mathbb{R})
\subset \mathcal{A}^{\tau}_{\delta}(\mathbb{R}), 
\label{inclusion_rel_01} 
\end{align} 
because if $a \in \mathcal{S}(\mathbb{R})$, for $\tau \in \mathbb{R}$, $-1 \leq \delta < p-1$ and $k \in \mathbb{Z}_{\geq 0}$, taking $m \in \mathbb{Z}_{\geq 0}$ such that $\tau + \delta k > -m$, since $|a^{(k)}(x)| \leq C_{k} \langle x \rangle^{\tau + \delta k}$ for $x \in \mathbb{R}$ where $C_{k} = \sup_{x \in \mathbb{R}} \langle x \rangle^{m} |a^{(k)}(x)| > 0$, then $a \in \mathcal{A}^{\tau}_{\delta}(\mathbb{R})$, and if $a \in \mathcal{A}^{\tau}_{\delta}(\mathbb{R})$ for any $\tau \in \mathbb{R}$ and $-1 \leq \delta < p-1$, for $m,k \in \mathbb{Z}_{\geq 0}$, taking $\tau \in \mathbb{R}$ and $-1 \leq \delta < p-1$ such that $-\tau - \delta k > m$, since $\sup_{x \in \mathbb{R}} \langle x \rangle^{m} |a^{(k)}(x)| \leq \sup_{x \in \mathbb{R}} \langle x \rangle^{-\tau - \delta k} |a^{(k)}(x)| < \infty$, then $a \in \mathcal{S}(\mathbb{R})$. 
Also if $a \in \mathcal{A}^{\tau}_{\delta}(\mathbb{R})$, then for any $j \in \mathbb{Z}_{\geq 0}$, $a^{(j)} \in \mathcal{A}^{\tau + \delta j}_{\delta}(\mathbb{R})$ and for any $l \in \mathbb{Z}_{\geq 0}$, 
\begin{align} 
|a^{(j)}|^{(\tau + \delta j)}_{l} 
&= \max_{k'=j,\dots,j+l} \sup_{x \in \mathbb{R}} \langle x \rangle^{-\tau - \delta k'} |a^{(k')}(x)| 
\leq |a|^{(\tau)}_{j+l}. 
\label{A_tau_delta_def03} 
\end{align} 
\end{rem} 

Finally we define the oscillatory integrals discussed in this paper as follows: 
\begin{defn} 
\label{oscillatory_integral_definition} 
Assume that $\lambda > 0$, $p>0$, $q>0$, $m \in \mathbb{N}$ and $a \in C^{\infty}(\mathbb{R})$. 
If there exist the following limits of improper integrals independent of $\chi \in \mathcal{S}(\mathbb{R})$ with $\chi(0) = 1$, then we define oscillatory integrals by 
\begin{align} 
\tilde{I}^{\pm}_{p,q}[a](\lambda) 
:&= Os\text{-}\int_{0}^{\infty} e^{\pm i \lambda x^{p}} x^{q-1} a (x) dx 
:= \lim_{\varepsilon \to +0} \int_{0}^{\infty} e^{\pm i \lambda x^{p}} x^{q-1} a (x) \chi (\varepsilon x) dx, 
\label{oscillatory_integral_def01} \\ 
\tilde{J}_{m,1}^{\pm}[a](\lambda) 
:&= Os\text{-}\int_{-\infty}^{\infty} e^{\pm i\lambda x^{m}} a(x) dx 
:= \lim_{\varepsilon \to +0} \int_{-\infty}^{\infty} e^{\pm i \lambda x^{m}} a (x) \chi (\varepsilon x) dx. 
\label{oscillatory_integral_def02} 
\end{align} 
We write $\chi_{\varepsilon}(x) := \chi(\varepsilon x)$ for $x \in \mathbb{R}$ and $0 < \varepsilon <1 $ from now on. 
Then $\chi_{\varepsilon}^{(k)}(x) = \partial_{x}^{k} (\chi(\varepsilon x)) = \partial_{y}^{k} \chi(\varepsilon x)  \varepsilon^{k}$ for $k \in \mathbb{Z}_{\geq 0}$. 
\end{defn} 

\section{Existence of oscillatory integrals} 
\label{section_Existence of oscillatory integrals} 

In this section, 
we shall show the existence of the oscillatory integrals \eqref{oscillatory_integral_def01} and \eqref{oscillatory_integral_def02} for $a \in \mathcal{A}^{\tau}_{\delta}(\mathbb{R})$. 

The following lemma verifies that we can perform integration by parts in the improper integrals on the half-line $(0,\infty)$: 
\begin{lem}[\cite{Nagano-Miyazaki04}] 
\label{L_star_l} 
Assume that 
$\lambda > 0$, $p > 0$, $q > 0$, $a \in \mathcal{A}^{\tau}_{\delta}(\mathbb{R})$ and $\chi_{\varepsilon}(x) := \chi(\varepsilon x)$ for $x \in \mathbb{R}$ where $\chi \in \mathcal{S}(\mathbb{R})$ with $\chi(0) = 1$ and $0 < \varepsilon <1 $. 
Let 
$\varphi \in C^{\infty}_{0}(\mathbb{R})$ be a cut-off function such that $\varphi \equiv 1$ on $|x| \leq r_{0}$ with $r_{0} \geq 1$ and $\varphi \equiv 0$ on $|x| \geq r_{1} > r_{0}$, 
and $a_{h} := a \psi_{h}$ where $\psi_{h} := 1 - \delta_{h1} \varphi$ for $h=0,1$ where $\delta_{h1}$ is the Kronecker's delta, and let $L^{*} := - \frac{1}{i\lambda} \frac{d}{dx}  \frac{1}{px^{p-1}}$ be the formal adjoint operator of $L := \frac{1}{px^{p-1}} \frac{1}{i\lambda} \frac{d}{dx}$ and $l_{0}:=[q/p)$.
Then for any $k \in \mathbb{Z}_{\geq 0}$, 
the following hold: 

\begin{enumerate} 
\item[(i)] 
For any $l \in \mathbb{Z}_{\geq 0}$, let $C_{l,j}$ be a real constant satisfying 
\begin{align} 
&C_{l,j} = (q-pl+j) C_{l-1,j} + C_{l-1,j-1}~\text{for}~j=1,\dots,l-1~\text{with}~l \geq 1, 
\label{C_l_j_def01} \\ 
&C_{l,0} = (q-pl) C_{l-1,0},~C_{l,l} = C_{l-1,l-1}~\text{for}~l \in \mathbb{N}~\text{and}~C_{0,0} = 1. 
\label{C_l_j_def02} 
\end{align} 
Then we have 
\begin{align} 
L^{*l} (x^{q-1} a_{h}(x) \chi_{\varepsilon}^{(k)}(x)) 
= \left( \frac{i}{\lambda p} \right)^{l} \sum_{j=0}^{l} C_{l,j} x^{q-1-pl+j} (a_{h}(x) \chi_{\varepsilon}^{(k)}(x))^{(j)} 
\label{L star_l_01} 
\end{align} 
for $x \in (0,\infty)$ and $h=0,1$, 
where $L^{*0}$ is an identity operator, 
and 
\begin{align} 
\text{$C_{l,0} = \prod_{s=1}^{l} (q-ps)$ for $l \in \mathbb{N}$ and $C_{l,l} = 1$ for $l \in \mathbb{Z}_{\geq 0}$.} 
\label{C_l_0_02} 
\end{align} 

\item[(ii)] 
If $q>p$ and $h=0$, 
then for any $l=0,\dots,l_{0}$, or if $p > 0$, $q > 0$ and $h = 1$, then for any $l \in \mathbb{Z}_{\geq 0}$, 
the improper integrals 
\begin{align} 
\int_{0}^{\infty} e^{\pm i\lambda x^{p}} (\pm L^{*})^{l} (x^{q-1} a_{h}(x) \chi_{\varepsilon}^{(k)}(x)) dx 
\notag 
\end{align} 
are absolutely convergent. 
\item[(iii)] 
If $q>p$ and $h=0$, 
then for any $l=1,\dots,l_{0}$, or if $p > 0$, $q > 0$ and $h = 1$, then for any $l \in \mathbb{N}$, 
\begin{align} 
\big| e^{\pm i\lambda x^{p}} (\pm i\lambda px^{p-1})^{-1} (\pm L^{*})^{l-1} (x^{q-1} a_{h}(x) \chi_{\varepsilon}^{(k)}(x)) \big| \to 0 
\notag 
\end{align} 
as $x \to +0$ or $x \to \infty$. 
\item[(iv)] 
If $q>p$ and $h=0$, 
then for any $l=1,\dots,l_{0}$, or if $p > 0$, $q > 0$ and $h = 1$, then for any $l \in \mathbb{N}$, 
\begin{align} 
\int_{0}^{\infty} e^{\pm i\lambda x^{p}} x^{q-1} a_{h}(x) \chi_{\varepsilon}^{(k)}(x) dx 
= \int_{0}^{\infty} e^{\pm i\lambda x^{p}} (\pm L^{*})^{l} (x^{q-1} a_{h}(x) \chi_{\varepsilon}^{(k)}(x)) dx. 
\label{I_pq_pm_a_lambda} 
\end{align} 
\end{enumerate} 
\end{lem}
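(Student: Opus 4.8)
The plan is to treat the four parts in order, using (i) as the algebraic backbone on which (ii)--(iv) rest. For (i) I would induct on $l$. Writing $L^{*}=\frac{i}{\lambda p}\frac{d}{dx}(x^{1-p}\,\cdot\,)$ (using $-1/i=i$), the base case $l=0$ is the statement $C_{0,0}=1$. For the inductive step I apply one more $L^{*}$ to the claimed expression for $L^{*(l-1)}$, absorb the factor $x^{1-p}$ into $x^{q-1-p(l-1)+j}$ to obtain $x^{q-pl+j}$, and differentiate by the product rule. Collecting the coefficient of $x^{q-1-pl+j}(a_{h}\chi_{\varepsilon}^{(k)})^{(j)}$ yields exactly $(q-pl+j)C_{l-1,j}+C_{l-1,j-1}$, which is the recursion; the endpoints $j=0$ and $j=l$ collapse to the two relations in \eqref{C_l_j_def02}. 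Iterating these recursions gives the closed forms $C_{l,0}=\prod_{s=1}^{l}(q-ps)$ and $C_{l,l}=1$ of \eqref{C_l_0_02}.

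For (ii) I substitute the identity from (i); since $|e^{\pm i\lambda x^{p}}|=1$, convergence reduces to integrability on $(0,\infty)$ of $\sum_{j=0}^{l}|C_{l,j}|\,x^{q-1-pl+j}\,|(a_{h}\chi_{\varepsilon}^{(k)})^{(j)}|$, which I would split at a fixed radius. Near $+\infty$ convergence is automatic: for fixed $\varepsilon\in(0,1)$ the function $\chi_{\varepsilon}$ lies in $\mathcal{S}(\mathbb{R})$, so every derivative $\chi_{\varepsilon}^{(k+j-i)}$ arising from the Leibniz expansion decays faster than any power and defeats both the polynomial growth of $a$ afforded by $a\in\mathcal{A}^{\tau}_{\delta}(\mathbb{R})$ and the powers of $x$. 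Near $0$ the factors $(a_{h}\chi_{\varepsilon}^{(k)})^{(j)}$ are bounded, so the integrand is $O(x^{q-1-pl+j})$ with most singular term at $j=0$, namely $O(x^{q-1-pl})$; this is integrable precisely when $q-pl>0$, i.e. $l\le l_{0}=[q/p)$. This is the $h=0$ case, which presupposes $q>p$ so that $l_{0}\ge 1$. For $h=1$ the factor $a_{1}=a(1-\varphi)$ vanishes on $|x|\le r_{0}$, so the integrand is identically zero near the origin and no restriction on $l$ is needed.

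Part (iii) is the estimate I expect to be most delicate, since it is what actually pins down the admissible range of $l$. Applying (i) with $l$ replaced by $l-1$ and multiplying by $(\pm i\lambda p x^{p-1})^{-1}$, whose modulus is $(\lambda p)^{-1}x^{1-p}$, the boundary expression has modulus at most $(\lambda p)^{-l}\sum_{j}|C_{l-1,j}|\,x^{q-pl+j}\,|(a_{h}\chi_{\varepsilon}^{(k)})^{(j)}|$, the exponent coming from $1-p+q-1-p(l-1)+j=q-pl+j$. As $x\to\infty$ the Schwartz decay of $\chi_{\varepsilon}$ forces each term to $0$. As $x\to+0$, for $h=0$ the derivatives are bounded and the smallest exponent $q-pl$ (at $j=0$) is positive exactly when $l\le l_{0}$, so $x^{q-pl}\to 0$; for $h=1$ the expression vanishes near $0$ outright. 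This explains the ranges $l=1,\dots,l_{0}$ for $h=0$ and all $l\in\mathbb{N}$ for $h=1$.

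Finally, for (iv) I would integrate by parts $l$ times. The operator $L$ is designed so that $(\pm L)e^{\pm i\lambda x^{p}}=e^{\pm i\lambda x^{p}}$, since $Le^{\pm i\lambda x^{p}}=\pm e^{\pm i\lambda x^{p}}$. Writing $e^{\pm i\lambda x^{p}}=(\pm L)e^{\pm i\lambda x^{p}}$ and transposing through the formal adjoint gives, for one step,
\begin{align}
\int_{0}^{\infty} e^{\pm i\lambda x^{p}}F\,dx
= \Big[\tfrac{e^{\pm i\lambda x^{p}}F}{\pm i\lambda p x^{p-1}}\Big]_{0}^{\infty}
+ \int_{0}^{\infty} e^{\pm i\lambda x^{p}}(\pm L^{*})F\,dx,
\notag
\end{align}
taken with $F=x^{q-1}a_{h}\chi_{\varepsilon}^{(k)}$. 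The boundary term is precisely the quantity shown to vanish in (iii), and both integrals converge absolutely by (ii), so it may be dropped. Replacing $F$ by $(\pm L^{*})F$ and repeating $l$ times yields \eqref{I_pq_pm_a_lambda}; the hypotheses on $l$ are exactly those ensuring that (ii) and (iii) apply at each intermediate stage, which is why the same ranges of $l$ appear.
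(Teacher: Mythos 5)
Your proposal is correct and follows essentially the same route as the paper: induction with the product rule for (i), splitting the integral at a fixed radius and using Schwartz decay of $\chi_{\varepsilon}$ at infinity together with the power count $q-1-pl>-1$ (equivalently $l\leq l_{0}$) at the origin for (ii) and (iii), and iterated integration by parts with $(\pm L)e^{\pm i\lambda x^{p}}=e^{\pm i\lambda x^{p}}$, the boundary terms killed by (iii) and convergence guaranteed by (ii), for (iv). The only cosmetic difference is that the paper organizes the estimates through explicit weighted seminorms ($|a|^{(\tau)}_{l}$, $|\psi_{h}|_{l}$) because the resulting inequalities are reused later, whereas you argue qualitatively, which suffices for the lemma as stated.
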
 

\begin{proof} 
Since the lower side of the double sign $\pm$'s can be obtained as the conjugate of the upper one, 
we shall show the upper one. 

(i) 
By induction on $l \in \mathbb{Z}_{\geq 0}$. 
If $l=0$, 
then \eqref{L star_l_01} with $C_{0,0} = 1$ holds. 
Next if \eqref{L star_l_01} holds for $l-1$ with $l \geq 1$, 
then defining $C_{l,j}$ by \eqref{C_l_j_def01} and \eqref{C_l_j_def02}, we have 
\begin{align} 
&L^{*l} (x^{q-1} a_{h}(x) \chi_{\varepsilon}^{(k)}(x)) 
\notag \\ 
&= - \frac{1}{i\lambda} \frac{d}{dx} \frac{1}{px^{p-1}} \left( \frac{i}{\lambda p} \right)^{l-1} \sum_{j=0}^{l-1} C_{l-1,j} x^{q-1-p(l-1)+j} (a_{h}(x) \chi_{\varepsilon}^{(k)}(x))^{(j)} \notag \\ 
&= \left( \frac{i}{\lambda p} \right)^{l} \sum_{j=0}^{l-1} C_{l-1,j} \notag \\ 
&\hspace{0.75cm}\times \left\{ (q-pl+j) x^{q-1-pl+j} (a_{h}(x) \chi_{\varepsilon}^{(k)}(x))^{(j)} + x^{q-pl+j} (a_{h}(x) \chi_{\varepsilon}^{(k)}(x))^{(j+1)} \right\} \notag \\ 
&= \left( \frac{i}{\lambda p} \right)^{l} \Bigg[ \sum_{j=1}^{l-1} \big\{ (q-pl+j) C_{l-1,j} + C_{l-1,j-1} \big\} x^{q-1-pl+j} (a_{h}(x) \chi_{\varepsilon}^{(k)}(x))^{(j)} \Bigg. \notag \\ 
&\hspace{0.75cm}\Bigg. + (q-pl) C_{l-1,0} x^{q-1-pl} a_{h}(x) \chi_{\varepsilon}^{(k)}(x) + C_{l-1,l-1} x^{q-1-pl+l} (a_{h}(x) \chi_{\varepsilon}^{(k)}(x))^{(l)} \Bigg] \notag \\ 
&= \left( \frac{i}{\lambda p} \right)^{l} \sum_{j=0}^{l} C_{l,j} x^{q-1-pl+j} (a_{h}(x) \chi_{\varepsilon}^{(k)}(x))^{(j)}. \notag 
\end{align} 
Therefore \eqref{L star_l_01} with \eqref{C_l_0_02} holds. 

(ii) 
For $h=0,1$ and $l \in \mathbb{Z}_{\geq 0}$, put 
\begin{align} 
f_{h,l}(x) = e^{i\lambda x^{p}} L^{*l} (x^{q-1} a_{h}(x) \chi_{\varepsilon}^{(k)}(x)). 
\notag 
\end{align} 
Applying Leibniz's formula to \eqref{L star_l_01} with $a_{h} := a \psi_{h}$ and \eqref{A_tau_delta_def02}, 
for any $x \in (0,\infty)$, 
\begin{align} 
&|f_{h,l}(x)| 
= |L^{*l} (x^{q-1} a_{h}(x) \chi_{\varepsilon}^{(k)}(x))| \notag \\ 
&\leq (\lambda p)^{-l} \sum_{j=0}^{l} |C_{l,j}| |x|^{q-1-pl+j} \sum_{s+t+u=j} \frac{j!}{s!t!u!} |a|^{(\tau)}_{l} \langle x \rangle^{\tau + \delta s} |\psi_{h}^{(t)}(x)| |\chi_{\varepsilon}^{(k+u)}(x)|. 
\notag 
\end{align} 
Here since $\psi_{0} \equiv 1$, then $\langle x \rangle^{t} |\psi_{0}^{(t)}(x)| \equiv \delta_{t0}$ on $\mathbb{R}$, 
and since $\psi_{1} = 1 - \varphi$, then 
\begin{align} 
\langle x \rangle^{t} |\psi_{1}^{(t)}(x)| = 
\begin{cases} 
0, & \text{if}~|x| \leq r_{0}, \\ 
\langle x \rangle^{t} |\delta_{t0} - \varphi^{(t)}(x)|, &  \text{if}~r_{0} < |x| < r_{1}, \\ 
\delta_{t0}, & \text{if}~|x| \geq r_{1}, 
\end{cases} 
\notag 
\end{align} 
where $\delta_{t0}$ is the Kronecker's delta. 
Hence we can define 
\begin{align} 
|\psi_{h}|_{l} 
:= \max_{t=0,\dots,l} \sup_{x \in \mathbb{R}} \langle x \rangle^{t} |\psi_{h}^{(t)}(x)| \in [1,\infty) 
\label{|psi_{h}|_{l,2}} 
\end{align} 
for $h=0,1$. Then since $|\psi_{h}^{(t)}(x)| \leq \langle x \rangle^{-t} |\psi_{h}|_{l}$ with $-1 \leq \delta$, for any $x \in (0,\infty)$, 
\begin{align} 
&|f_{h,l}(x)| 
= |L^{*l} (x^{q-1} a_{h}(x) \chi_{\varepsilon}^{(k)}(x))| \notag \\ 
&\leq (\lambda p)^{-l} \sum_{j=0}^{l} |C_{l,j}| |x|^{q-1-pl+j} 
\sum_{s+t+u=j} \frac{j!}{s!t!u!} |a|^{(\tau)}_{l} \langle x \rangle^{\tau + \delta (s+t)} |\psi_{h}|_{l} |\chi_{\varepsilon}^{(k+u)}(x)|. 
\label{|f_{h,l}(x)|02} 
\end{align} 
(In the case of $h=1$, this inequality will be used as \eqref{L^*l_est00} in the proof of Theorem \ref{Lax02} (ii).) 

Here if $|x| \geq 1$, since $|x| \leq 2^{1/2} \langle x \rangle$ and $2^{-1/2} \langle x \rangle \leq |x|$, 
then 
for any $t \in \mathbb{R}$, 
\begin{align} 
|x|^{t} \leq 2^{|t|/2} \langle x \rangle^{t}. 
\label{|x|_angle_x} 
\end{align} 
Since $\chi \in \mathcal{S}(\mathbb{R})$, there exists a positive constant $\tilde{C}_{k+u}$ such that for any $m \in \mathbb{Z}_{\geq 0}$, 
\begin{align} 
|\chi_{\varepsilon}^{(k+u)}(x)| 
= |\partial_{y}^{k+u} \chi (\varepsilon x) \varepsilon^{k+u}| 
\leq \tilde{C}_{k+u} \langle \varepsilon x \rangle^{-m} 
\leq \tilde{C}_{k+u} \varepsilon^{-m} \langle x \rangle^{-m}. 
\label{chi_est00} 
\end{align} 
Applying \eqref{|x|_angle_x}, \eqref{chi_est00} to \eqref{|f_{h,l}(x)|02}, for any $x \in [1,\infty)$ and for any $m \in \mathbb{Z}_{\geq 0}$, we have 
\begin{align} 
|f_{h,l}(x)| 
&\leq (\lambda p)^{-l} \sum_{j=0}^{l} |C_{l,j}| \cdot 2^{|q-1-pl+j|/2} \langle x \rangle^{q-1-pl+j} \notag \\ 
&\hspace{0.75cm}\times \sum_{s+t+u=j} \frac{j!}{s!t!u!} |a|^{(\tau)}_{l} \langle x \rangle^{\tau + \delta j} \langle x \rangle^{u} |\psi_{h}|_{l} \tilde{C}_{k+u} \varepsilon^{-m} \langle x \rangle^{-m} \notag \\ 
&\leq C^{(k)}_{l,\varepsilon} \lambda^{-l} |a|^{(\tau)}_{l} \langle x \rangle^{q-1+\tau -(p-1-\delta)l+l-m}, 
\label{|f_{h,l}(x)|03} 
\end{align} 
where $C^{(k)}_{l,\varepsilon}$ is a positive constant dependent on $\varepsilon$. 
Since $f(x) = O(x^{\beta})~(x \to \infty)$ with $\beta = q-1+\tau -(p-1-\delta)l+l-m$ for any $m \in \mathbb{Z}_{\geq 0}$, 
taking $m$ such that $\beta < -1$, then $\int_{1}^{\infty} f_{h,l}(x) dx$ is absolutely convergent. 

When $q>p$ and $h=0$, since $a_{0} = a$, by \eqref{|f_{h,l}(x)|02}, for any $x \in (0,1)$, 
\begin{align} 
|f_{0,l}(x)| \leq C_{l}|x|^{q-1-pl}, 
\label{f_m_orger_alpha} 
\end{align} 
where $C_{l}$ is a positive constant. 
Here let $l_{0}:= [q/p)$, since $(q/p)-1 \leq l_{0} < q/p$, then $0 < q-pl_{0} \leq p$. 
Hence for any $l=0,\dots,l_{0}$, since $f_{0,l}(x) = O(x^{\alpha})~(x \to +0)$ with $\alpha = q-1-pl \geq q-1-pl_{0} > -1$, then $\int_{0}^{1} f_{0,l}(x) dx$ is absolutely convergent. 

When $p > 0$, $q > 0$ and $h = 1$, since $a_{1} = a(1 - \varphi) \equiv 0$ on $(0,1]$, then $f_{1,l} \equiv 0$ on $(0,1]$ for $l \in \mathbb{Z}_{\geq 0}$. 
Then $\int_{0}^{1} f_{1,l}(x) dx$ is absolutely convergent. 

Therefore $\int_{0}^{\infty} f_{h,l}(x) dx = \int_{0}^{1} f_{h,l}(x) dx + \int_{1}^{\infty} f_{h,l}(x) dx$ is absolutely convergent. 

(iii) 
For $h=0,1$ and $l \in \mathbb{N}$, put 
\begin{align} 
g_{h,l-1}(x) = (i\lambda px^{p-1})^{-1} f_{h,l-1}(x). 
\notag 
\end{align} 
By \eqref{|x|_angle_x} and \eqref{|f_{h,l}(x)|03}, for any $x \in (1,\infty)$ and for any $m \in \mathbb{Z}_{\geq 0}$, 
\begin{align} 
|g_{h,l-1}(x)| 
\leq C_{1} \langle x \rangle^{-(p-1)} C_{2} \langle x \rangle^{q-1+\tau -(p-1-\delta)(l-1)+(l-1)-m} 
\leq C_{1} C_{2} \langle x \rangle^{q+\tau+l-m}, 
\notag 
\end{align} 
where $C_{1}$ and $C_{2}$ are positive constants. 
Here taking $m \in \mathbb{Z}_{\geq 0}$ such that $q+\tau+l-m<0$, then $|g_{h,l-1}(x)| \to 0$ as $x \to \infty$. 

When $q>p$ and $h=0$, by \eqref{f_m_orger_alpha}, for any $x \in (0,1)$, 
\begin{align} 
|g_{0,l-1}(x)| 
\leq C_{3}|x|^{-(p-1)} C_{l-1} |x|^{q-1-p(l-1)} 
= C_{3} C_{l-1} |x|^{q-pl}, 
\notag 
\end{align} 
where $C_{3}$ and $C_{l-1}$ are positive constants. 
Here for any $l=1,\dots,l_{0}$, since $q-pl \geq q-pl_{0} > 0$, then $|g_{0,l-1}(x)| \to 0$ as $x \to +0$. 

When $p > 0$, $q > 0$ and $h = 1$, by (ii), since $f_{1,l-1} \equiv 0$, then $g_{1,l-1} \equiv 0$ on $(0,1]$ for $l \in \mathbb{N}$. 
Therefore $|g_{1,l-1}(x)| \to 0$ as $x \to +0$. 

(iv) 
By induction on $l \in \mathbb{N}$. 
If $l=1$, since $L(e^{i\lambda x^{p}}) = e^{i\lambda x^{p}}$ when $x \ne 0$, 
by integration by parts with (ii) and (iii), 
then the following holds as $l=1$. 
Thus \eqref{I_pq_pm_a_lambda} holds for $l=1$. 
Next if \eqref{I_pq_pm_a_lambda} holds for $l-1$ with $l \geq 2$, 
then we have 
\begin{align} 
&\int_{0}^{\infty} e^{i\lambda x^{p}} x^{q-1} a_{h}(x) \chi_{\varepsilon}^{(k)}(x) dx 
= \int_{0}^{\infty} e^{i\lambda x^{p}} L^{*l-1} (x^{q-1} a_{h}(x) \chi_{\varepsilon}^{(k)}(x)) dx \notag \\ 
&= \lim_{\substack{u \to +0\\v \to \infty}} \int_{u}^{v} \frac{1}{px^{p-1}} \frac{1}{i\lambda} \frac{d}{dx}(e^{i\lambda x^{p}}) L^{*l-1} (x^{q-1} a_{h}(x) \chi_{\varepsilon}^{(k)}(x)) dx \notag \\ 
&= \lim_{\substack{u \to +0\\v \to \infty}} \bigg\{ \Big[ e^{i\lambda x^{p}} (i\lambda px^{p-1})^{-1} L^{*l-1} (x^{q-1} a_{h}(x) \chi_{\varepsilon}^{(k)}(x)) \Big]_{u}^{v} \bigg. \notag \\ 
&\hspace{0.65cm}\bigg. + \int_{u}^{v} e^{i\lambda x^{p}} L^{*l} (x^{q-1} a_{h}(x) \chi_{\varepsilon}^{(k)}(x)) dx \bigg\} \notag \\ 
&= \int_{0}^{\infty} e^{i\lambda x^{p}} L^{*l} (x^{q-1} a_{h}(x) \chi_{\varepsilon}^{(k)}(x)) dx. 
\notag 
\end{align} 
This completes the proof. 
\end{proof} 

By Lemma \ref{L_star_l}, 
we obtain the following theorem: 
\begin{thm}[\cite{Nagano-Miyazaki04}] 
\label{Lax02} 
Assume that 
$\lambda > 0$, $p > 0$, $q > 0$, $a \in \mathcal{A}^{\tau}_{\delta}(\mathbb{R})$ and $\chi_{\varepsilon}(x) := \chi(\varepsilon x)$ for $x \in \mathbb{R}$ where $\chi \in \mathcal{S}(\mathbb{R})$ with $\chi(0) = 1$ and $0 < \varepsilon <1 $. 
Let 
$\varphi \in C^{\infty}_{0}(\mathbb{R})$ be a cut-off function such that $\varphi \equiv 1$ on $|x| \leq r_{0}$ with $r_{0} \geq 1$ and $\varphi \equiv 0$ on $|x| \geq r_{1} > r_{0}$, 
and $\psi := 1 - \varphi$, and let $L^{*} := - \frac{1}{i\lambda} \frac{d}{dx}  \frac{1}{px^{p-1}}$ and $l_{0}:=[q/p)$. 
Then the following hold: 
\begin{enumerate} 
\item[(i)] 
For each $k \in \mathbb{Z}_{\geq 0}$, 
there exist the following limit of improper integrals independent of $\chi$, and the following hold: 
\begin{align} 
\lim_{\varepsilon \to +0} \int_{0}^{\infty} e^{\pm i\lambda x^{p}} x^{q-1} a(x) \varphi(x) \chi_{\varepsilon}^{(k)}(x) dx 
= \delta_{k0} \int_{0}^{\infty} e^{\pm i\lambda x^{p}} x^{q-1} a(x) \varphi(x) dx, 
\notag 
\end{align} 
where $\delta_{k0}$ is the Kronecker's delta. 
\item[(ii)] 
For each $k \in \mathbb{Z}_{\geq 0}$, 
there exist the following limit of improper integrals independent of $\chi$, 
and for any $l \in \mathbb{N}$ such that $l \geq l_{p,q}$, the following hold: 
\begin{align} 
&\lim_{\varepsilon \to +0} \int_{0}^{\infty} e^{\pm i\lambda x^{p}} x^{q-1} a(x) \psi(x) \chi_{\varepsilon}^{(k)}(x) dx 
= \delta_{k0} \int_{0}^{\infty} e^{\pm i\lambda x^{p}} (\pm L^{*})^{l} (x^{q-1} a(x) \psi(x)) dx, 
\notag 
\end{align} 
where 
\begin{align} 
l_{p,q} := \left[ \frac{(q+\tau)^{+}}{p-1-\delta} \right] +1. 
\label{m_def} 
\end{align} 
In particular, if $k=0$, there exist the following oscillatory integrals, and for any $l \in \mathbb{N}$ such that $l \geq l_{p,q}$, the following hold: 
\begin{align} 
\tilde{I}_{p,q}^{\pm}[a \psi](\lambda) 
:&= Os\text{-}\int_{0}^{\infty} e^{\pm i\lambda x^{p}} x^{q-1} a(x) \psi(x) dx \notag \\ 
&= \int_{0}^{\infty} e^{\pm i\lambda x^{p}} (\pm L^{*})^{l} (x^{q-1} a(x) \psi(x)) dx, 
\label{existence_of_oscillatory integrals_a_psi_01} 
\end{align} 
and there exists a positive constant $\tilde{C}^{(0)}_{l}$ 
such that for any $\lambda > 0$, 
\begin{align} 
| \tilde{I}_{p,q}^{\pm}[a \psi](\lambda) | 
\leq \tilde{C}^{(0)}_{l} |a|^{(\tau)}_{l} \lambda^{-l}, 
\label{existence_of_oscillatory integrals_a_psi_02} 
\end{align} 
where $|a|^{(\tau)}_{l}$ is defined by \eqref{|a|^(tau)_l}. 
\item[(iii)] 
If $k \ne 0$, then 
\begin{align} 
\lim_{\varepsilon \to +0} \int_{0}^{\infty} e^{\pm i\lambda x^{p}} x^{q-1} a(x) \chi_{\varepsilon}^{(k)}(x) dx 
= 0. 
\label{when_k=0} 
\end{align} 
\item[(iv)] 
There exist the following oscillatory integrals, and the following hold: 
\begin{align} 
\tilde{I}_{p,q}^{\pm}[a](\lambda) 
:= Os\text{-}\int_{0}^{\infty} e^{\pm i\lambda x^{p}} x^{q-1} a(x) dx 
= \tilde{I}_{p,q}^{\pm}[a \varphi](\lambda) + \tilde{I}_{p,q}^{\pm}[a \psi](\lambda). 
\label{existence_of_oscillatory integrals_new} 
\end{align} 
Then for each $l \in \mathbb{N}$ such that $l \geq l_{p,q}$, there exists a positive constant $C_{l}$ 
such that for any $\lambda \geq 1$, 
\begin{align} 
|\tilde{I}_{p,q}^{\pm}[a](\lambda)| \leq C_{l} |a|^{(\tau)}_{l}. 
\label{tilde_I_m_est} 
\end{align} 
\item[(v)] 
If $q > p$, 
then there exists a positive constant $C_{p,q}$ such that for any $\lambda \geq 1$, 
\begin{align} 
| \tilde{I}_{p,q}^{\pm}[a](\lambda) | 
\leq C_{p,q} |a|^{(\tau)}_{l_{0}+l_{p,q}} \lambda^{-\frac{q-p}{p}}. 
\label{tilde_I_m_est02} 
\end{align} 
\end{enumerate} 
\end{thm}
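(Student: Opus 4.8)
The plan is to prove the five parts in order, exploiting the splitting $a = a\varphi + a\psi$ so that the integrable singularity of $x^{q-1}$ at $x\to+0$ (carried by the compactly supported $a\varphi$) is separated from the slow decay at $x\to\infty$ (carried by $a\psi$); everything rests on Lemma~\ref{L_star_l} together with Proposition~\ref{chi epsilon}. For part~(i), the integrand $e^{\pm i\lambda x^{p}}x^{q-1}a(x)\varphi(x)\chi_{\varepsilon}^{(k)}(x)$ is supported in the fixed compact set $\{|x|\leq r_{1}\}$, where $x^{q-1}$ is integrable because $q>0$; hence each integral converges for fixed $\varepsilon$. For $k=0$, Proposition~\ref{chi epsilon}(i) gives $\chi_{\varepsilon}\to 1$ uniformly on this set, so dominated convergence (with dominating function a multiple of $x^{q-1}$ on the support) gives the stated limit, while for $k\neq 0$, Proposition~\ref{chi epsilon}(iii) gives $\chi_{\varepsilon}^{(k)}\to 0$ uniformly, forcing the limit to be $0$; this is the Kronecker factor $\delta_{k0}$.

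Part~(ii) is the heart of the theorem. I would first apply Lemma~\ref{L_star_l}(iv) with $h=1$ (so $a_{1}=a\psi$, for which the left endpoint causes no trouble since $\psi\equiv 0$ near $0$) to rewrite the integral as $\int_{0}^{\infty}e^{\pm i\lambda x^{p}}(\pm L^{*})^{l}(x^{q-1}a\psi\chi_{\varepsilon}^{(k)})\,dx$ for any $l$, and then pass to the limit $\varepsilon\to+0$ under the integral sign. Expanding $(\pm L^{*})^{l}$ by \eqref{L star_l_01} and estimating exactly as in \eqref{|f_{h,l}(x)|02}, but replacing the $\varepsilon$-dependent bound by the $\varepsilon$-uniform bound $|\chi_{\varepsilon}^{(k+u)}(x)|\leq C_{k+u}\langle x\rangle^{-(k+u)}$ from Proposition~\ref{chi epsilon}(ii), the worst term of the integrand is controlled on the support $\{|x|\geq r_{0}\}$ by a constant multiple of $\lambda^{-l}|a|^{(\tau)}_{l}\langle x\rangle^{q-1+\tau-(p-1-\delta)l}$. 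Since $\delta<p-1$, this exponent is strictly less than $-1$ precisely when $l>(q+\tau)/(p-1-\delta)$, which is guaranteed by $l\geq l_{p,q}$ of \eqref{m_def}; thus we have an $\varepsilon$-independent $L^{1}$ dominating function. Dominated convergence then applies: by Proposition~\ref{chi epsilon}(iii) every term carrying a derivative $\chi_{\varepsilon}^{(k+u)}$ with $k+u\geq 1$ vanishes, while the single term with $k=u=0$ has $\chi(\varepsilon x)\to 1$. This yields both the factor $\delta_{k0}$ and, for $k=0$, the limit $\int_{0}^{\infty}e^{\pm i\lambda x^{p}}(\pm L^{*})^{l}(x^{q-1}a\psi)\,dx$; since this expression contains no $\chi$, it is manifestly independent of $\chi$, which simultaneously establishes existence of $\tilde{I}_{p,q}^{\pm}[a\psi](\lambda)$ and the identity \eqref{existence_of_oscillatory integrals_a_psi_01}. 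The bound \eqref{existence_of_oscillatory integrals_a_psi_02} follows from the same pointwise estimate with $\chi\equiv 1$, the factor $\lambda^{-l}$ coming from the prefactor $(\lambda p)^{-l}$ in \eqref{L star_l_01}.

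Parts~(iii) and~(iv) are then formal: writing $a=a\varphi+a\psi$ and adding the conclusions of~(i) and~(ii) gives~(iii) for $k\neq 0$ and, for $k=0$, the existence and decomposition \eqref{existence_of_oscillatory integrals_new}; the estimate \eqref{tilde_I_m_est} follows by bounding the $\varphi$-part by $C|a|^{(\tau)}_{0}$ and the $\psi$-part by \eqref{existence_of_oscillatory integrals_a_psi_02} together with $\lambda^{-l}\leq 1$ for $\lambda\geq 1$. For~(v), where $q>p$ forces $l_{0}=[q/p)\geq 1$, I would integrate by parts $l_{0}$ times on the full amplitude: Lemma~\ref{L_star_l}(iv) now also applies with $h=0$ for $l=1,\dots,l_{0}$ (the endpoint terms vanish by Lemma~\ref{L_star_l}(iii) because $q-pl_{0}>0$), and after passing to the limit this represents $\tilde{I}_{p,q}^{\pm}[a](\lambda)=(\pm i/(\lambda p))^{l_{0}}\sum_{j=0}^{l_{0}}C_{l_{0},j}\,\tilde{I}_{p,q-pl_{0}+j}^{\pm}[a^{(j)}](\lambda)$, where every new exponent $q-pl_{0}+j$ is positive. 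Applying the boundedness estimate~(iv) to each of these finitely many oscillatory integrals and using $|a^{(j)}|^{(\tau+\delta j)}_{l'}\leq|a|^{(\tau)}_{j+l'}$ from \eqref{A_tau_delta_def03} controls them by $|a|^{(\tau)}_{l_{0}+l_{p,q}}$ for $\lambda\geq 1$, while the prefactor contributes $\lambda^{-l_{0}}$. Finally $l_{0}=[q/p)\geq q/p-1=(q-p)/p$ gives $\lambda^{-l_{0}}\leq\lambda^{-(q-p)/p}$ for $\lambda\geq 1$, which is \eqref{tilde_I_m_est02}.

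I expect the main obstacle to be the uniform-in-$\varepsilon$ domination in~(ii): one must take enough integrations by parts (this is exactly the role of the threshold $l_{p,q}$) so that the growth in $\langle x\rangle$ coming both from the amplitude class $\mathcal{A}^{\tau}_{\delta}(\mathbb{R})$ and from the Leibniz-expanded derivatives of the cut-off is overcome by the gain $(p-1-\delta)l$ per integration, and at the same time one must ensure, via Lemma~\ref{L_star_l}(iii), that no boundary contribution survives at the left endpoint $x\to+0$ on the half-line. Once these two points are secured, the remainder is bookkeeping with Leibniz's formula and the seminorms of Definition~\ref{A_tau_delta}.
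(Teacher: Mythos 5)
Your proposal is correct and follows essentially the same route as the paper: the decomposition $a=a\varphi+a\psi$, dominated convergence with the $\varepsilon$-uniform bounds of Proposition~\ref{chi epsilon}(ii) for the compactly supported part, repeated application of $L^{*}$ via Lemma~\ref{L_star_l} with $h=1$ and the threshold $l_{p,q}$ for the part supported away from the origin, and $l_{0}$ integrations by parts with $h=0$ reducing (v) to the bounded case (iv) via \eqref{A_tau_delta_def03}. The only detail you leave implicit that the paper spells out is the Leibniz expansion of $(a\chi_{\varepsilon})^{(j)}$ in step (v), whose terms containing $\chi_{\varepsilon}^{(u)}$ with $u\geq 1$ are killed by part (iii) before identifying the limit with $\sum_{j}C_{l_{0},j}\tilde{I}^{\pm}_{p,q-pl_{0}+j}[a^{(j)}](\lambda)$.
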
 

\begin{proof} 
Since the lower side of the double sign $\pm$'s can be obtained as the conjugate of the upper one, we shall show the upper one. 

(i) 
Put $f_{k}(x) = e^{i\lambda x^{p}} x^{q-1} a(x) \varphi(x) \chi_{\varepsilon}^{(k)}(x)$ for $k \in \mathbb{Z}_{\geq 0}$. 
Since $f_{k}$ is continuous on $(0,\infty)$ and $f_{k} \equiv 0$ on $[r_{1},\infty)$, 
then $f_{k}$ is integrable on $[u,\infty)$ for any $u \in (0,\infty)$ and $f_{k}(x) = O(x^{\alpha})~(x \to +0)$ with $\alpha =q-1>-1$. 
Thus the improper integral 
\begin{align} 
\int_{0}^{\infty} e^{i\lambda x^{p}} x^{q-1} a(x) \varphi(x) \chi_{\varepsilon}^{(k)}(x) dx 
\notag 
\end{align} 
is absolutely convergent for any $k \in \mathbb{Z}_{\geq 0}$. 
By Proposition \ref{chi epsilon} (ii) in \S \ref{Preliminary}, 
there exists a positive constant $C_{0}$ independent of $0 < \varepsilon <1 $ such that for any $x \in (0,\infty)$, 
\begin{align} 
|e^{i\lambda x^{p}} x^{q-1} a(x) \varphi(x) \chi_{\varepsilon}^{(k)}(x)| 
\leq C_{0} |a|^{(\tau)}_{0} x^{q-1} \langle x \rangle^{\tau} |\varphi(x)| =: M(x). 
\label{M_est} 
\end{align} 
Since $\int_{0}^{\infty} M(x) dx$ is absolutely convergent independent of $\chi$, 
by using Lebesgue's dominated convergence theorem with Proposition \ref{chi epsilon} (i) and (iii) in \S \ref{Preliminary}, 
for each $k \in \mathbb{Z}_{\geq 0}$, 
there exists the following limit of the improper integral independent of $\chi$, and the following holds: 
\begin{align} 
\lim_{\varepsilon \to +0} \int_{0}^{\infty} e^{i\lambda x^{p}} x^{q-1} a(x) \varphi(x) \chi_{\varepsilon}^{(k)}(x) dx 
= \delta_{k0} \int_{0}^{\infty} e^{i\lambda x^{p}} x^{q-1} a(x) \varphi(x) dx. 
\notag 
\end{align} 

(ii) By Lemma \ref{L_star_l} (ii) when $h=1$ and $l=0$, the improper integral 
\begin{align} 
\int_{0}^{\infty} e^{i\lambda x^{p}} x^{q-1} a(x) \psi(x) \chi_{\varepsilon}^{(k)}(x) dx 
\label{tilde_I_phi_a_psi02} 
\end{align} 
is absolutely convergent for any $k \in \mathbb{Z}_{\geq 0}$. 
In order to apply Lebesgue's dominated convergence theorem, 
we shall show \eqref{tilde_I_phi_a_psi02} is bounded independent of $\chi$ for any $k \in \mathbb{Z}_{\geq 0}$. 
By Lemma \ref{L_star_l} (iv) when $h=1$, 
for any $l \in \mathbb{N}$, 
\begin{align} 
\int_{0}^{\infty} e^{i\lambda x^{p}} x^{q-1} a(x) \psi(x) \chi_{\varepsilon}^{(k)}(x) dx 
= \int_{0}^{\infty} e^{i\lambda x^{p}} L^{*l} (x^{q-1} a(x) \psi(x) \chi_{\varepsilon}^{(k)}(x)) dx. 
\label{L star} 
\end{align} 
By Lemma \ref{L_star_l} (i) with Proposition \ref{chi epsilon} (ii) in \S \ref{Preliminary}, 
this means that the order of integrand descends to be integrable in the sense of Lebesgue by $L^{*l}$ with sufficiently large number $l \gg 0$. 
We shall show this. 
By \eqref{|f_{h,l}(x)|02} when $h=1$ with \eqref{C_l_0_02}, 
for each $l \in \mathbb{Z}_{\geq 0}$, there exist real constants $(C_{l,0},\dots,C_{l,l}) \ne (0,\dots,0)$ such that 
for any $x \in (0,\infty)$, 
\begin{align} 
&|L^{*l} (x^{q-1} a(x) \psi(x) \chi_{\varepsilon}^{(k)}(x))| \notag \\ 
&\leq (\lambda p)^{-l} \sum_{j=0}^{l} |C_{l,j}| |x|^{q-1-pl+j} \sum_{s+t+u=j} \frac{j!}{s!t!u!} |a|^{(\tau)}_{l} \langle x \rangle^{\tau + \delta (s+t)} |\psi|_{l} |\chi_{\varepsilon}^{(k+u)}(x)|, 
\label{L^*l_est00} 
\end{align} 
where $|\psi|_{l} := \max_{t=0,\dots,l} \sup_{x \in \mathbb{R}} \langle x \rangle^{t} |\psi^{(t)}(x)|$. 
Here since $\psi \equiv 0$ on $|x| \leq 1$, then $\mathrm{supp} \psi \cap (0,\infty) \subset [1,\infty)$. 
Hence for any $x \in \mathrm{supp} \psi \cap (0,\infty)$, since $|x| \geq 1$, by \eqref{|x|_angle_x}, 
\begin{align} 
|x|^{q-1-pl+j} \leq 2^{|q-1-pl+j|/2} \langle x \rangle^{q-1-pl+j}. 
\label{|x|_angle_x02}
\end{align} 
Also by Proposition \ref{chi epsilon} (ii) in \S \ref{Preliminary} with $-1 \leq \delta$, for each $k \in \mathbb{Z}_{\geq 0}$ and for each $u=0,\dots,l$, 
there exists a positive constant $C_{k+u}$ independent of $0 < \varepsilon <1 $ such that for any $x \in [0,\infty)$, 
\begin{align} 
|\chi_{\varepsilon}^{(k+u)}(x)| 
\leq C_{k+u} \langle x \rangle^{\delta u}. 
\label{partial_x_k+w_chi_varepsilon_x} 
\end{align} 
According to \eqref{L^*l_est00} with \eqref{|x|_angle_x02} and \eqref{partial_x_k+w_chi_varepsilon_x}, 
for any $x \in (0,\infty)$, 
\begin{align} 
|L^{*l} (x^{q-1} a(x) \psi(x) \chi_{\varepsilon}^{(k)}(x))| 
&\leq (\lambda p)^{-l} \sum_{j=0}^{l} |C_{l,j}| \cdot 2^{|q-1-pl+j|/2} \langle x \rangle^{q-1-pl+j} \notag \\ 
&\hspace{0.75cm}\times \sum_{s+t+u=j} \frac{j!}{s!t!u!} |a|^{(\tau)}_{l} \langle x \rangle^{\tau + \delta (s+t)} |\psi|_{l} C_{k+u} \langle x \rangle^{\delta u} \notag \\ 
&\leq C^{(k)}_{l} \lambda^{-l} |a|^{(\tau)}_{l} \langle x \rangle^{\beta} =: M_{k}(x), 
\label{estimate of L star} 
\end{align} 
where $C^{(k)}_{l}$ is a positive constant independent of $\varepsilon$ 
and 
\begin{align} 
\beta = (q+\tau)^{+} -(p-1-\delta)l-1. 
\notag 
\end{align} 
Here define $l_{p,q}$ by $\eqref{m_def}$, since $x-[x]-1<0$ for any $x \in \mathbb{R}$, then for any $l \in \mathbb{N}$ such that $l \geq l_{p,q}$, 
\begin{align} 
\beta 
&\leq (p-1-\delta) \left\{ \frac{(q+\tau)^{+}}{p-1-\delta} - \left[ \frac{(q+\tau)^{+}}{p-1-\delta} \right] -1 \right\} -1 < -1. 
\notag 
\end{align} 
Thus for any $k \in \mathbb{Z}_{\geq 0}$, $\int_{0}^{\infty} M_{k}(x) dx$ is absolutely convergent independent of $\chi$. 

Therefore by applying Lebesgue's dominated convergence theorem to the right hand side in \eqref{L star} as $\varepsilon \to +0$, and using Proposition \ref{chi epsilon} (i) and (iii) in \S \ref{Preliminary} to \eqref{L star_l_01}, 
then for each $k \in \mathbb{Z}_{\geq 0}$, 
there exists the following limit of the improper integral independent of $\chi$, and for any $l \in \mathbb{N}$ such that $l \geq l_{p,q}$, the following holds: 
\begin{align} 
&\lim_{\varepsilon \to +0} \int_{0}^{\infty} e^{i\lambda x^{p}} x^{q-1} a(x) \psi(x) \chi_{\varepsilon}^{(k)}(x) dx 
= \delta_{k0} \int_{0}^{\infty} e^{i\lambda x^{p}} L^{*l} (x^{q-1} a(x) \psi(x) ) dx. 
\notag 
\end{align} 
If $k=0$, there exists $\tilde{I}_{p,q}^{+}[a \psi](\lambda)$ with \eqref{existence_of_oscillatory integrals_a_psi_01}. 
Then as $\varepsilon \to +0$ in \eqref{estimate of L star}, \eqref{existence_of_oscillatory integrals_a_psi_02} holds for any $\lambda > 0$. 

(iii) 
If $k \ne 0$, since $\varphi + \psi \equiv 1$, by (i) and (ii), \eqref{when_k=0} holds. 

(iv) 
If $k = 0$, since $\varphi + \psi \equiv 1$, by (i) and (ii), there exists $\tilde{I}_{p,q}^{+}[a](\lambda)$ with \eqref{existence_of_oscillatory integrals_new}. 
Then as $\varepsilon \to +0$ in \eqref{M_est} when $k=0$ with \eqref{existence_of_oscillatory integrals_a_psi_02}, 
\eqref{tilde_I_m_est} holds for any $\lambda \geq 1$. 

(v) 
When $q > p$, let $l_{0} = [q/p)$, then $(q/p)-1 \leq l_{0} < q/p$. 
Because $q-pl_{0}+j >0$ and $a^{(j)} \in \mathcal{A}^{\tau +\delta j}_{\delta}(\mathbb{R})$ for $j=0,\dots,l_{0}$, 
by Lemma \ref{L_star_l} (iv) and (i) when $h=0$ and $k=0$, and by (iii) and (iv), we have 
\begin{align} 
\tilde{I}_{p,q}^{+}[a](\lambda) 
&= \lim_{\varepsilon \to +0} \int_{0}^{\infty} e^{i\lambda x^{p}} \left( \frac{i}{\lambda p} \right)^{l_{0}} \sum_{j=0}^{l_{0}} C_{l_{0},j} x^{q-1-pl_{0}+j} (a(x) \chi_{\varepsilon}(x))^{(j)} dx \notag \\ 
&= \left( \frac{i}{\lambda p} \right)^{l_{0}} \sum_{j=0}^{l_{0}} C_{l_{0},j} \tilde{I}_{p,q-pl_{0}+j}^{+}[a^{(j)}](\lambda), 
\label{q>p_tilde_I_pq_+_a(lambda)} 
\end{align} 
where $(C_{l_{0},0},\dots, C_{l_{0},l_{0}}) \ne (0,\dots,0)$. 
Here by \eqref{m_def}, for any $j=0,\dots,l_{0}$, let 
\begin{align} 
w_{j} := \left[ \frac{(q-pl_{0}+j+\tau +\delta j)^{+}}{p-1-\delta} \right] +1 
\label{m_j_def} 
\end{align} 
Then $w_{j} \leq w_{l_{0}}$. 
Applying \eqref{tilde_I_m_est}, \eqref{A_tau_delta_def03} in \S \ref{Preliminary} to \eqref{q>p_tilde_I_pq_+_a(lambda)}, 
for any $\lambda \geq 1$, 
\begin{align} 
|\tilde{I}_{p,q}^{+}[a](\lambda)| 
&\leq (\lambda p)^{-l_{0}} \sum_{j=0}^{l_{0}} |C_{l_{0},j}| C_{w_{j}} |a|^{(\tau)}_{j+w_{j}} 
\leq C_{p,q} |a|^{(\tau)}_{l_{0}+w_{l_{0}}} \lambda^{-\frac{q}{p}+1}, 
\label{|tilde_I_p_q_+_a_(lambda)|} 
\end{align} 
where $C_{w_{j}}$ and $C_{p,q}$ are positive constants. 
Here by \eqref{m_j_def} and \eqref{m_def}, we see 
\begin{align} 
w_{l_{0}} 
&= \left[ \frac{\{ q+\tau - (p-1-\delta) l_{0} \}^{+}}{p-1-\delta} \right] +1 
\leq 
l_{p,q}. 
\label{l_{0} + w_{l_{0}}} 
\end{align} 

Therefore by \eqref{|tilde_I_p_q_+_a_(lambda)|} and \eqref{l_{0} + w_{l_{0}}}, 
\eqref{tilde_I_m_est02} holds 
for $\lambda \geq 1$. 
\end{proof} 

If $p = m \in \mathbb{N}$, $q=1$ and $k=0$, by change of variable, an oscillatory integral on another half-line $(-\infty,0]$ can be reduced to the one on half-line $[0,\infty)$ as follows: 
\begin{cor} 
\label{Os_m} 
Assume that 
$\lambda > 0$ and $a \in \mathcal{A}^{\tau}_{\delta}(\mathbb{R})$. 
Let 
$m \in \mathbb{N}$. 
Then there exist the following oscillatory integral and the following hold: 
\begin{align} 
\tilde{I}_{m,1}^{\pm \pm^{m}}[a(-y)](\lambda) 
:&= Os\text{-}\int_{0}^{\infty} e^{\pm (-1)^{m}i\lambda y^{m}} a(-y) dy \notag \\ 
&= \lim_{\varepsilon \to +0} \int_{-\infty}^{0} e^{\pm i \lambda x^{m}} a (x) \chi (\varepsilon x) dx 
=: Os\text{-}\int_{-\infty}^{0} e^{\pm i\lambda x^{m}} a(x) dx, 
\label{Os_int_-infty_0_e^pm_i_lambda_x^m_a(x)_dx_def01} 
\end{align} 
where $\chi \in \mathcal{S}(\mathbb{R})$ with $\chi(0) = 1$. Then we have 
\begin{align} 
\tilde{J}_{m,1}^{\pm}[a](\lambda) 
:&= Os\text{-}\int_{-\infty}^{\infty} e^{\pm i\lambda x^{m}} a(x) dx \notag \\ 
&= Os\text{-}\int_{0}^{\infty} e^{\pm i\lambda x^{m}} a(x) dx + Os\text{-}\int_{-\infty}^{0} e^{\pm i\lambda x^{m}} a(x) dx. 
\label{Os_int_-infty_infty_e^pm_i_lambda_x^m_a(x)_dx_def01} 
\end{align} 
\end{cor}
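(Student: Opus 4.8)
The plan is to reduce both assertions to the half-line existence results of Theorem \ref{Lax02}, the decisive ingredient being the independence of those oscillatory integrals from the choice of convergence factor $\chi$.

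For \eqref{Os_int_-infty_0_e^pm_i_lambda_x^m_a(x)_dx_def01} I would start from the regularized integral over $(-\infty,0]$ and apply the change of variable $x=-y$. Since $(-y)^{m}=(-1)^{m}y^{m}$ and $dx=-dy$, this gives
\begin{align}
\int_{-\infty}^{0} e^{\pm i\lambda x^{m}} a(x) \chi(\varepsilon x)\, dx
= \int_{0}^{\infty} e^{\pm (-1)^{m} i\lambda y^{m}} a(-y) \chi(-\varepsilon y)\, dy.
\notag
\end{align}
The task is then to recognize the right-hand side as a regularized half-line integral with convergence factor $\tilde{\chi}(y):=\chi(-y)$. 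I would verify that $\tilde{\chi}\in\mathcal{S}(\mathbb{R})$ with $\tilde{\chi}(0)=1$ and that the reflected amplitude $\tilde{a}(y):=a(-y)$ lies in $\mathcal{A}^{\tau}_{\delta}(\mathbb{R})$; both are immediate, since $\langle -y\rangle=\langle y\rangle$ and $\tilde{a}^{(k)}(y)=(-1)^{k}a^{(k)}(-y)$, so the bounds defining the class are preserved. Theorem \ref{Lax02} (iv), applied with $p=m$, $q=1$, amplitude $\tilde{a}$, and phase sign $\pm(-1)^{m}$ (one of the two signs already covered there), then guarantees that the limit as $\varepsilon\to+0$ of the right-hand side exists, equals $\tilde{I}_{m,1}^{\pm\pm^{m}}[a(-y)](\lambda)$, and is independent of the choice of convergence factor. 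This establishes \eqref{Os_int_-infty_0_e^pm_i_lambda_x^m_a(x)_dx_def01} and simultaneously defines the oscillatory integral over $(-\infty,0]$.

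For \eqref{Os_int_-infty_infty_e^pm_i_lambda_x^m_a(x)_dx_def01} I would split the regularized full-line integral for each fixed $\varepsilon$:
\begin{align}
\int_{-\infty}^{\infty} e^{\pm i\lambda x^{m}} a(x) \chi(\varepsilon x)\, dx
= \int_{0}^{\infty} e^{\pm i\lambda x^{m}} a(x) \chi(\varepsilon x)\, dx
+ \int_{-\infty}^{0} e^{\pm i\lambda x^{m}} a(x) \chi(\varepsilon x)\, dx.
\notag
\end{align}
This splitting is legitimate because $a(x)\chi(\varepsilon x)\in L^{1}(\mathbb{R})$: the amplitude grows at most polynomially by \eqref{A_tau_delta_def02}, while $\chi(\varepsilon\,\cdot\,)\in\mathcal{S}(\mathbb{R})$ decays rapidly. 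Passing to the limit $\varepsilon\to+0$, the first summand converges to $Os\text{-}\int_{0}^{\infty}$ by Theorem \ref{Lax02} (iv) and the second to $Os\text{-}\int_{-\infty}^{0}$ by the part just proved; since each limit exists and is independent of $\chi$, so is their sum, yielding the existence of $\tilde{J}_{m,1}^{\pm}[a](\lambda)$ together with \eqref{Os_int_-infty_infty_e^pm_i_lambda_x^m_a(x)_dx_def01}.

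The only genuinely delicate point is the identification of the limit in the first step: after the substitution the convergence factor becomes $\chi(-\varepsilon y)$ rather than $\chi(\varepsilon y)$, so one cannot merely quote the definition of $\tilde{I}_{m,1}^{\pm\pm^{m}}[a(-y)](\lambda)$ for a fixed $\chi$. What rescues the argument is exactly the $\chi$-independence built into Theorem \ref{Lax02}: because the half-line oscillatory integral takes the same value for every admissible convergence factor, the reflected factor $\tilde{\chi}$ is as good as any other, and the limit is forced to equal $\tilde{I}_{m,1}^{\pm\pm^{m}}[a(-y)](\lambda)$. Everything else is a routine change of variable together with the elementary observation that reflection $x\mapsto -x$ preserves both the Schwartz class and the amplitude class $\mathcal{A}^{\tau}_{\delta}(\mathbb{R})$.
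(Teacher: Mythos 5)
Your proposal is correct and follows essentially the same route as the paper: reflect via $x=-y$, observe that $a(-y)\in\mathcal{A}^{\tau}_{\delta}(\mathbb{R})$ and $\chi(-y)\in\mathcal{S}(\mathbb{R})$ so that Theorem \ref{Lax02} (iv) with $p=m$, $q=1$ and the $\chi$-independence of the limit apply, then split the regularized full-line integral and pass to the limit. Your explicit attention to the reflected convergence factor and to the legitimacy of the splitting is a slightly more detailed writeup of exactly the argument the paper gives.
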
 

\begin{proof} 
Since $a(-y) \in \mathcal{A}^{\tau}_{\delta}(\mathbb{R})$ and $\chi(-y) \in \mathcal{S}(\mathbb{R})$, by Theorem \ref{Lax02} (iv) when $p=m \in \mathbb{N}$ and $q=1$, 
there exist oscillatory integrals $\tilde{I}_{m,1}^{\pm \pm^{m}}[a(-y)](\lambda)$, and by change of variable $y = -x$, we obtain \eqref{Os_int_-infty_0_e^pm_i_lambda_x^m_a(x)_dx_def01}. Moreover by \eqref{oscillatory_integral_def02}, we obtain \eqref{Os_int_-infty_infty_e^pm_i_lambda_x^m_a(x)_dx_def01}. 
\end{proof} 

\section{Generalized Fresnel Integrals} 
\label{section_Generalized_Fresnel_Integrals} 

In this section, 
we consider a generalization of Fresnel integrals. 

\begin{lem}[\cite{Nagano01},\cite{Nagano-Miyazaki03},\cite{Nagano-Miyazaki04}] 
\label{Generalized Fresnel integrals} 
Assume that $p > q > 0$. 
Then the following hold: 
\begin{align} 
I_{p,q}^{\pm} 
:= \int_{0}^{\infty} e^{\pm ix^{p}} x^{q-1} dx 
= p^{-1} e^{\pm i\frac{\pi}{2} \frac{q}{p}} \varGamma \left( \frac{q}{p} \right). 
\label{I_pq} 
\end{align} 
\end{lem}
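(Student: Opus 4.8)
The plan is to prove the ``$+$'' case by the contour--integration argument foreshadowed in the introduction (Cauchy's theorem on a fan/sector), and then to obtain the ``$-$'' case by complex conjugation. First I would record that the improper integral $I_{p,q}^{+}$ converges: near $x=0$ the integrand is $O(x^{q-1})$, which is integrable because $q>0$, while near $x=\infty$ the substitution $t=x^{p}$ turns the tail into $p^{-1}\int^{\infty}e^{it}t^{q/p-1}\,dt$, and since $q<p$ one has $q/p-1<0$, so a single integration by parts (Dirichlet's test) yields convergence. Thus both roles of the hypothesis $p>q$ are already visible: one at infinity here, and one in the arc estimate below.

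Next I would consider the holomorphic function $f(z):=e^{iz^{p}}z^{q-1}$ on the closed sector
\begin{align}
S_{\varepsilon,R}:=\left\{\, re^{i\phi}\ :\ \varepsilon\le r\le R,\ 0\le\phi\le\theta \,\right\},\qquad \theta:=\frac{\pi}{2p},
\notag
\end{align}
where $z^{p}$ and $z^{q-1}$ are the branches obtained by analytic continuation of their positive real values as $\arg z$ increases from $0$. Since $\arg(z^{p})=p\arg z$ ranges only over $[0,\pi/2]$ on $S_{\varepsilon,R}$, these branches are well defined and $f$ is holomorphic there; for small $p$, where $\theta>\pi$, one simply reads $S_{\varepsilon,R}$ as a subset of the Riemann surface of the logarithm, on which the region is still simply connected, so Cauchy's integral theorem applies and gives $\oint_{\partial S_{\varepsilon,R}}f(z)\,dz=0$. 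I would then split $\partial S_{\varepsilon,R}$ into the outer real segment, the outer arc $|z|=R$, the return ray $\arg z=\theta$, and the inner arc $|z|=\varepsilon$.

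The decisive computation is the return ray: on $\arg z=\theta$ one has $z^{p}=ir^{p}$, so $e^{iz^{p}}=e^{-r^{p}}$ decays, and parametrizing $z=re^{i\theta}$ gives, after the substitution $t=r^{p}$, the limiting contribution $-e^{i\theta q}p^{-1}\varGamma(q/p)=-p^{-1}e^{i\frac{\pi}{2}\frac{q}{p}}\varGamma(q/p)$. The inner arc is $O(\varepsilon^{q})\to0$ as $\varepsilon\to+0$ because $q>0$, and the outer real segment converges to $I_{p,q}^{+}$. The main obstacle is the outer arc: on $z=Re^{i\phi}$ one has $|f(z)|=R^{q-1}e^{-R^{p}\sin(p\phi)}$, so after the substitution $u=p\phi$ the arc integral is bounded by $p^{-1}R^{q}\int_{0}^{\pi/2}e^{-R^{p}\sin u}\,du$, and applying Jordan's inequality $\sin u\ge 2u/\pi$ bounds this by a constant times $R^{q-p}$, which tends to $0$ as $R\to\infty$ precisely because $q<p$. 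Collecting the four pieces in $\oint f=0$ and letting $\varepsilon\to+0$, $R\to\infty$ yields $I_{p,q}^{+}-p^{-1}e^{i\frac{\pi}{2}\frac{q}{p}}\varGamma(q/p)=0$, which is the asserted identity; conjugating, and using that $\varGamma(q/p)$ is real, then delivers the ``$-$'' case.
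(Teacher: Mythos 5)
Your proposal is correct and follows essentially the same route as the paper's proof: Cauchy's theorem on the fan of opening $\pi/(2p)$, Jordan's inequality to make the outer arc vanish (using $q<p$), a bound on the inner arc that tends to $0$ (using $q>0$), and the return ray at $\arg z=\pi/(2p)$ producing $p^{-1}e^{i\frac{\pi}{2}\frac{q}{p}}\varGamma(q/p)$ after the substitution $t=r^{p}$, with the ``$-$'' case by conjugation. Your explicit handling of the branch of $z^{q-1}$ and of the case $\pi/(2p)>\pi$ via the Riemann surface of the logarithm, and your simpler $O(\varepsilon^{q})$ estimate of the inner arc, are minor refinements of details the paper treats implicitly or by a slightly longer computation, and do not change the argument.
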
 

\begin{proof} 
Suppose $p > q > 0$. 
Consider the following $C^{1}$-curves in $\mathbb{C}$ : 
\begin{align} 
C_{1} &: z=x~(0 < r  \leq x \leq R), \notag \\ 
C_{2} &: z=R e^{i\theta}~(0 \leq \theta \leq \pi /2p), \notag \\ 
C_{3} &: z=-s e^{i(\pi /2p)}~(-R  \leq s \leq -r), \notag \\ 
C_{4} &: z=r e^{-i\tau}~(-\pi /2p \leq \tau \leq 0), \notag 
\end{align} 
and a domain $D$ with the anticlockwise oriented boundary $\sum_{j=1}^{4} C_{j}$. 
Since $e^{iz^{p}} z^{q-1}$ is holomorphic in $D$ for $p > q > 0$, 
by Cauchy's integral theorem, 
\begin{align} 
0 
= \int_{\sum_{j=1}^{4} C_{j}} e^{iz^{p}} z^{q-1} dz 
= \sum_{j=1}^{4} \int_{C_{j}} e^{iz^{p}} z^{q-1} dz. 
\label{Cauchy} 
\end{align} 

As to $\int_{C_{2}} e^{iz^{p}} z^{q-1} dz$, 
by Jordan's inequality:$(2p/\pi) \theta < \sin (p\theta)$ for $0 < \theta < \pi/2p$, we have 
\begin{align} 
&\left| \int_{C_{2}} e^{iz^{p}} z^{q-1} dz \right| 
= \left| \int_{0}^{\frac{\pi}{2p}} e^{i(Re^{i\theta})^{p}} (Re^{i\theta})^{q-1} Rie^{i\theta} d\theta \right| 
\leq R^{q} \int_{0}^{\frac{\pi}{2p}} e^{-R^{p} \sin (p\theta)} d\theta \notag \\ 
&< R^{q} \int_{0}^{\frac{\pi}{2p}} e^{-R^{p} \frac{2p}{\pi} \theta} d\theta 
= R^{q} \left[ \frac{1}{-R^{p} \frac{2p}{\pi}} e^{-R^{p} \frac{2p}{\pi} \theta} \right]_{0}^{\frac{\pi}{2p}} 
= \frac{\pi}{2p} \frac{1 - e^{-R^{p}}}{R^{p-q}} \to 0 
\label{C_2_integral} 
\end{align} 
as $R \to \infty$. 
As to $\int_{C_{4}} e^{iz^{p}} z^{q-1} dz$, 
by change of variable $\tau = -\theta$, we have 
\begin{align} 
&\left| \int_{C_{4}} e^{iz^{p}} z^{q-1} dz \right| 
= \left| \int_{-\frac{\pi}{2p}}^{0} e^{i(r e^{-i\tau})^{p}} (r e^{-i\tau})^{q-1} (-r ie^{-i\tau}) d\tau \right| \notag \\ 
&= \left| \int_{\frac{\pi}{2p}}^{0} e^{i(r e^{i\theta})^{p}} (r e^{i\theta})^{q-1} (-r ie^{i\theta}) (-d\theta) \right| 
\leq {r}^{q} \int_{0}^{\frac{\pi}{2p}} e^{-{r}^{p} \sin (p\theta)} d\theta 
< \frac{\pi}{2p} \frac{1 - e^{-r^{p}}}{r^{p-q}} \to 0 
\label{C_4_integral} 
\end{align} 
as $r \to +0$. 
Here we used l'H\^{o}pital's rule as follows, 
\begin{align} 
\lim_{r \to +0} \frac{1 - e^{-r^{p}}}{r^{p-q}} 
= \lim_{r \to +0} \frac{pr^{p-1} e^{-r^{p}}}{(p-q) r^{p-q-1}} 
= \lim_{r \to +0} \frac{pr^{q} e^{-r^{p}}}{p-q} 
= 0. 
\notag 
\end{align} 

Next $\int_{0}^{1} e^{ix^{p}} x^{q-1} dx$ is absolutely convergent. 
On the other hand, using integration by parts, we have 
\begin{align} 
\int_{1}^{v} e^{ix^{p}} x^{q-1} dx 
&= \int_{1}^{v} \frac{1}{ip} \frac{d}{dx} (e^{ix^{p}}) x^{q-p} dx \notag \\ 
&= \frac{1}{ip} \left\{ \Big[ e^{ix^{p}} x^{q-p} \Big] _{1}^{v} - (q-p) \int_{1}^{v} e^{ix^{p}} x^{q-p-1} dx \right\}. 
\notag 
\end{align} 
Here since $|e^{ix^{p}} x^{q-p}| \to 0$ as $x \to \infty$ and $\int_{1}^{\infty} e^{ix^{p}} x^{q-p-1} dx$ is absolutely convergent, the following integral is also absolutely convergent, and the following holds:  
\begin{align} 
\int_{1}^{\infty} e^{ix^{p}} x^{q-1} dx 
&= \frac{1}{ip} \left\{ -e^{i} - (q-p) \int_{1}^{\infty} e^{ix^{p}} x^{q-p-1} dx \right\}. 
\label{int_0_infty_e_ixp_x_q-1} 
\end{align} 
(This equality will be used at \eqref{Os_1_infty} in the proof of Theorem \ref{th01} (i).) 

Therefore the following integral is also absolutely convergent, 
and by \eqref{Cauchy}, \eqref{C_2_integral}, \eqref{C_4_integral}, and change of variables $s = -t^{1/p}$, 
the following holds:
\begin{align} 
I_{p,q}^{+} 
:&= \int_{0}^{\infty} e^{ix^{p}} x^{q-1} dx 
= \lim_{\substack{r \to +0\\ R \to \infty}} \int_{C_{1}} e^{iz^{p}} z^{q-1} dz 
= -\lim_{\substack{r \to +0\\ R \to \infty}} \int_{C_{3}} e^{iz^{p}} z^{q-1} dz \notag \\ 
&= - \lim_{\substack{r \to +0\\ R \to \infty}} \int_{-R}^{-r} e^{i(-se^{i\frac{\pi}{2p}})^{p}} (-se^{i\frac{\pi}{2p}})^{q-1} (-e^{i\frac{\pi}{2p}} ds) \notag \\ 
&= -\lim_{\substack{r \to +0\\ R \to \infty}} \int_{R^{p}}^{r^{p}} e^{i(t^{\frac{1}{p}}e^{i\frac{\pi}{2p}})^{p}} (t^{\frac{1}{p}}e^{i\frac{\pi}{2p}})^{q-1} (-e^{i\frac{\pi}{2p}}) (-p^{-1} t^{\frac{1}{p}-1} dt) \notag \\ 
&= p^{-1} e^{i\frac{\pi}{2} \frac{q}{p}} \int_{0}^{\infty} e^{-t} t^{\frac{q}{p}-1} dt 
= p^{-1} e^{i\frac{\pi}{2} \frac{q}{p}} \varGamma \left( \frac{q}{p} \right). 
\notag 
\end{align} 
\end{proof} 

When $q \geq p > 0$, 
we can make a sense of \eqref{I_pq} as an oscillatory integral. 
By Lemma \ref{L_star_l}, Theorem \ref{Lax02} in \S \ref{section_Existence of oscillatory integrals} and Lemma \ref{Generalized Fresnel integrals}, 
we obtain the following theorem: 
\begin{thm}[\cite{Nagano01},\cite{Nagano-Miyazaki03},\cite{Nagano-Miyazaki04}] 
\label{th01} 
Assume that $p,q \in \mathbb{C}$. 
\begin{enumerate} 
\item[(i)] 
If $p > 0$ and $q > 0$, 
then 
\begin{align} 
\tilde{I}_{p,q}^{\pm} 
:= Os\mbox{-}\int_{0}^{\infty} e^{\pm ix^{p}} x^{q-1} dx 
= p^{-1} e^{\pm i\frac{\pi}{2} \frac{q}{p}} \varGamma \left( \frac{q}{p} \right). 
\label{generalized_Fresnel_integral_def} 
\end{align} 
\item[(ii)] 
The $\tilde{I}_{p,q}^{\pm}$ can be extended non-zero meromorphic on $\mathbb{C}$ with poles of order 1 at $q = -pj$ for $j \in \mathbb{N}$ as to $q$ for each $p \in \mathbb{C} \setminus \{ 0 \}$, 
and meromorphic on $\mathbb{C} \setminus \{ 0 \}$ with poles of order 1 at $p = -q/j$ for $j \in \mathbb{N}$ as to $p$ for each $q \in \mathbb{C}$. 
\end{enumerate} 

We call $\tilde{I}^{\pm}_{p,q}$ ``generalized Fresnel integrals.'' 
\end{thm}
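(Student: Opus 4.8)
The plan is to handle both signs at once by conjugation, as is done elsewhere in the paper: since $\overline{e^{ix^{p}}x^{q-1}} = e^{-ix^{p}}x^{q-1}$ for real $x$ and $q$, and $\varGamma(q/p)$ is real for $q/p>0$, the lower-sign identity is the complex conjugate of the upper one, so I only treat $+$. Existence of the oscillatory integral $\tilde{I}_{p,q}^{+} = \tilde{I}_{p,q}^{+}[1](1)$ for every $p,q>0$ is already supplied by Theorem \ref{Lax02} (iv) applied to the constant amplitude $a \equiv 1 \in \mathcal{A}^{0}_{\delta}(\mathbb{R})$ with $\lambda = 1$. Hence the whole content of (i) is to \emph{evaluate} it, and my strategy is to reduce the exponent $q$ modulo $p$ down into the window $(0,p]$, where the problem collapses onto the convergent improper integral of Lemma \ref{Generalized Fresnel integrals}.

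For the base range $0<q<p$ I would show the oscillatory integral equals the improper integral $I_{p,q}^{+}$ of Lemma \ref{Generalized Fresnel integrals}. Splitting the regularized integral $\int_{0}^{\infty} e^{ix^{p}} x^{q-1} \chi(\varepsilon x)\,dx$ at $x=1$, the piece on $(0,1)$ is dominated by the integrable function $x^{q-1}$ (here $q>0$), so Lebesgue's dominated convergence theorem lets $\varepsilon \to +0$ and returns $\int_{0}^{1} e^{ix^{p}} x^{q-1}\,dx$. On $(1,\infty)$ I integrate by parts once against $L^{*}$ as in Lemma \ref{L_star_l}; because $0<q<p$ the resulting integrand carries exponent $q-p-1<-1$ and is absolutely convergent uniformly in $\varepsilon$, so dominated convergence again removes $\chi$ and reproduces exactly the identity \eqref{int_0_infty_e_ixp_x_q-1} from the proof of Lemma \ref{Generalized Fresnel integrals}. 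Summing the two pieces gives $\tilde{I}_{p,q}^{+} = I_{p,q}^{+} = p^{-1} e^{i\frac{\pi}{2}\frac{q}{p}} \varGamma(q/p)$. The boundary value $q=p$ (which is hit whenever $q/p \in \mathbb{N}$ in the reduction below) I would dispatch by one direct integration by parts: $\tilde{I}_{p,p}^{+} = (ip)^{-1} \, Os\text{-}\!\int_{0}^{\infty} \frac{d}{dx}(e^{ix^{p}})\,dx = (ip)^{-1}(0-1) = i/p$, the boundary contribution at infinity vanishing since $\chi(\varepsilon\,\cdot)$ is Schwartz and the leftover $\varepsilon$-integral vanishing by oscillation.

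For $q>p$ I would not reprove an integration-by-parts step but simply invoke the reduction \eqref{q>p_tilde_I_pq_+_a(lambda)} from the proof of Theorem \ref{Lax02} (v). With $a\equiv 1$ all derivatives $a^{(j)}$ for $j\ge 1$ vanish, so only the $j=0$ term survives; putting $l_{0}=[q/p)$ and $q_{0}:=q-pl_{0}\in(0,p]$ it becomes
\[
\tilde{I}_{p,q}^{+} = \Big( \frac{i}{p} \Big)^{l_{0}} \Big( \prod_{s=1}^{l_{0}} (q-ps) \Big) \tilde{I}_{p,q_{0}}^{+}.
\]
Since $q_{0}$ lies in the already-settled window, it remains only to check that the closed form $F(q):=p^{-1}e^{i\frac{\pi}{2}\frac{q}{p}}\varGamma(q/p)$ satisfies the same recursion, which follows from $e^{i\frac{\pi}{2}\frac{q}{p}} = i^{l_{0}} e^{i\frac{\pi}{2}\frac{q_{0}}{p}}$ together with the functional equation $\varGamma(q/p) = \varGamma(q_{0}/p + l_{0}) = \big(\prod_{s=1}^{l_{0}}(q-ps)\big) p^{-l_{0}} \varGamma(q_{0}/p)$. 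This proves (i) for all $p,q>0$. For (ii) I would simply define the continuation by this closed form, which agrees with the oscillatory integral on $p,q>0$ by (i) and is the unique such extension by the identity theorem: the factor $p^{-1}e^{\pm i\frac{\pi}{2}\frac{q}{p}}$ is entire and nowhere zero in $q$ while $\varGamma(q/p)$ is meromorphic with simple poles exactly at $q/p\in\{0,-1,-2,\dots\}$ and no zeros, giving the stated simple poles at $q=-pj$ and the non-vanishing assertion; read in $p$ instead, the Gamma poles sit at $p=-q/j$ and $e^{\pm i\frac{\pi}{2}\frac{q}{p}}$ has an essential singularity at $p=0$, which is why the $p$-continuation lives on $\mathbb{C}\setminus\{0\}$.

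The one genuinely technical step is the base-case identification of the oscillatory integral with the improper integral on $0<q<p$: one must justify exchanging $\lim_{\varepsilon\to+0}$ with an integral that is only conditionally convergent, and this is precisely where the single integration by parts that renders the tail absolutely convergent, supported by the estimates behind Lemma \ref{L_star_l} and Theorem \ref{Lax02}, carries the argument. Everything afterward is the Gamma functional equation and bookkeeping of poles.
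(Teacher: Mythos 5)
Your proposal is correct and follows essentially the same route as the paper: conjugation to handle one sign, existence from Theorem \ref{Lax02} (iv) with $a \equiv 1$, identification with the improper integral of Lemma \ref{Generalized Fresnel integrals} on the window $0<q<p$ via a split at $x=1$, dominated convergence and one integration by parts, a direct integration by parts at $q=p$ (the vanishing of the leftover $\int_{0}^{\infty}e^{ix^{p}}\chi_{\varepsilon}'(x)\,dx$ being exactly \eqref{when_k=0}), the $L^{*l_{0}}$ reduction for $q>p$ with only the $j=0$ term surviving, and the Gamma functional equation plus pole bookkeeping for (ii). No gaps.
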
 

\begin{proof} 
Since the lower side of the double sign $\pm$'s can be obtained as the conjugate of the upper one, we shall show the upper one. 
Since $a \equiv 1 \in \mathcal{A}^{\tau}_{\delta}(\mathbb{R})$ with $\tau \geq 0$, 
we can use Theorem \ref{Lax02} in \S \ref{section_Existence of oscillatory integrals} when $\lambda = 1$, $k=0$. 

(i) 
Suppose $p > 0$ and $q > 0$. 
By Theorem \ref{Lax02} (iv) in \S \ref{section_Existence of oscillatory integrals}, there exists the oscillatory integral: 
\begin{align} 
\tilde{I}_{p,q}^{+} 
:= \tilde{I}_{p,q}^{+}[1](1) 
:= Os\text{-}\int_{0}^{\infty} e^{ix^{p}} x^{q-1} dx 
:= \lim_{\varepsilon \to +0} \int_{0}^{\infty} e^{ix^{p}} x^{q-1} \chi_{\varepsilon}(x) dx, 
\notag 
\end{align} 
where $\chi \in \mathcal{S}(\mathbb{R})$ with $\chi(0) = 1$ and $\chi_{\varepsilon}(x) := \chi(\varepsilon x)$ for $x \in \mathbb{R}$ and $0 < \varepsilon <1 $. 

When $p > q$, put $f(x) = e^{ix^{p}} x^{q-1} \chi_{\varepsilon}(x)$. 
Since $f \in L^{1}(0,1]$ independent of $0 < \varepsilon <1 $, 
by using Lebesgue's dominated convergence theorem with Proposition \ref{chi epsilon} (i) in \S \ref{Preliminary}, we have 
\begin{align} 
Os\text{-}\int_{0}^{1} e^{ix^{p}} x^{q-1} dx 
= \int_{0}^{1} e^{ix^{p}} x^{q-1} dx. 
\label{Os_0_1} 
\end{align} 
Since $f(x) = O(x^{\beta})~(x \to \infty)$ with $\beta = q-1-m$ for any $m \in \mathbb{Z}_{\geq 0}$, 
taking $m$ such that $\beta < -1$, then $\int_{1}^{\infty} f(x) dx$ is absolutely convergent. Thus by integration by parts, we have 
\begin{align} 
&\int_{1}^{\infty} e^{ix^{p}} x^{q-1} \chi_{\varepsilon}(x) dx 
= \lim_{v \to \infty} \int_{1}^{v} \frac{1}{ip} \frac{d}{dx} (e^{ix^{p}}) x^{q-p} \chi_{\varepsilon}(x) dx \notag \\ 
&= \lim_{v \to \infty} \frac{1}{ip} \bigg\{ \Big[ e^{ix^{p}} x^{q-p} \chi_{\varepsilon}(x) \Big] _{1}^{v} \bigg. \notag \\ 
&\bigg. \hspace{0.5cm}- (q-p) \int_{1}^{v} e^{ix^{p}} x^{q-p-1} \chi_{\varepsilon}(x) dx - \int_{1}^{v} e^{ix^{p}} x^{q-p} \chi'_{\varepsilon}(x) dx \bigg\}. 
\notag 
\end{align} 
Put $g_{j}(x) = e^{ix^{p}} x^{q-p-1+j} \chi^{(j)}_{\varepsilon}(x)$ for $j=0,1$. 
Since $g_{j}(x) = O(x^{\beta'})~(x \to \infty)$ with $\beta' = q-p-1+j-m$ for any $m \in \mathbb{Z}_{\geq 0}$, 
taking $m$ such that $\beta' < -1$, then $\int_{1}^{\infty} g_{j}(x) dx$ is absolutely convergent, 
and $| e^{ix^{p}} x^{q-p} \chi_{\varepsilon}(x) | \to 0$ as $x \to \infty$. 
Hence 
\begin{align} 
&\int_{1}^{\infty} e^{ix^{p}} x^{q-1} \chi_{\varepsilon}(x) dx \notag \\ 
&= \frac{1}{ip} \bigg\{ -e^{i}\chi_{\varepsilon}(1) - (q-p) \int_{1}^{\infty} e^{ix^{p}} x^{q-p-1} \chi_{\varepsilon}(x) dx - \int_{1}^{\infty} e^{ix^{p}} x^{q-p} \chi'_{\varepsilon}(x) dx \bigg\}. 
\notag 
\end{align} 
By Proposition \ref{chi epsilon} (ii) in \S \ref{Preliminary}, there exists $C_{j}$ independent of $0 < \varepsilon <1 $ such that for any $x \in [1,\infty)$, 
\begin{align} 
| g_{j}(x) | 
\leq |x|^{q-p-1+j} C_{j} \langle x \rangle^{-j} 
\leq C_{j} |x|^{q-p-1} =: M_{j}(x). 
\notag 
\end{align} 
Since $\int_{1}^{\infty} M_{j}(x) dx$ is absolutely convergent independent of $\chi$, 
by using Lebesgue's dominated convergence theorem with Proposition \ref{chi epsilon} (i) and (iii) in \S \ref{Preliminary}, there exists the following oscillatory integral, and by \eqref{int_0_infty_e_ixp_x_q-1}, the following holds: 
\begin{align} 
&Os\text{-}\int_{1}^{\infty} e^{ix^{p}} x^{q-1} dx 
= \frac{1}{ip} \left\{ -e^{i} - (q-p) \int_{1}^{\infty} e^{ix^{p}} x^{q-p-1} dx \right\} 
= \int_{1}^{\infty} e^{ix^{p}} x^{q-1} dx. 
\label{Os_1_infty} 
\end{align} 
Therefore by \eqref{Os_0_1}, \eqref{Os_1_infty} and \eqref{I_pq}, 
\begin{align} 
\tilde{I}_{p,q}^{+} 
:= Os\text{-}\int_{0}^{\infty} e^{ix^{p}} x^{q-1} dx 
= \int_{0}^{\infty} e^{ix^{p}} x^{q-1} dx 
=: I_{p,q}^{+} 
= p^{-1} e^{i\frac{\pi}{2} \frac{q}{p}} \varGamma \left( \frac{q}{p} \right). 
\label{Generalized Fresnel Oscillatory integral p > q > 0} 
\end{align} 

When $q=p$, by integration by parts, 
\begin{align} 
\tilde{I}_{p,p}^{+} 
&= \lim_{\varepsilon \to +0} \lim_{\substack{u \to +0\\ v \to \infty}} \int_{u}^{v} e^{ix^{p}} x^{p-1} \chi_{\varepsilon}(x) dx 
= \lim_{\varepsilon \to +0} \lim_{\substack{u \to +0\\ v \to \infty}} \int_{u}^{v} \frac{1}{ip} \frac{d}{dx} (e^{ix^{p}}) \chi_{\varepsilon}(x) dx \notag \\ 
&= \lim_{\varepsilon \to +0} \lim_{\substack{u \to +0\\ v \to \infty}} \frac{1}{ip} \left( \Big[ e^{ix^{p}} \chi_{\varepsilon}(x) \Big]_{u}^{v} 
- \int_{u}^{v} e^{ix^{p}} \chi'_{\varepsilon}(x) dx \right). 
\notag 
\end{align} 
Here since $\chi_{\varepsilon} \in \mathcal{S}(\mathbb{R})$ for $0 < \varepsilon <1$, $\int_{0}^{\infty} e^{ix^{p}} \chi'_{\varepsilon}(x) dx$ is absolutely convergent and $|e^{ix^{p}} \chi_{\varepsilon}(x)| \to 0$ as $x \to \infty$. 
Hence by Theorem \ref{Lax02} (iii) in \S \ref{section_Existence of oscillatory integrals}, 
\begin{align} 
\tilde{I}_{p,p}^{+} 
&= \lim_{\varepsilon \to +0} \frac{1}{ip} \left( -1 - \int_{0}^{\infty} e^{ix^{p}} \chi'_{\varepsilon}(x) dx \right) 
= \frac{i}{p} 
= p^{-1} e^{i\frac{\pi}{2} \frac{p}{p}} \varGamma \left( \frac{p}{p} \right). 
\label{I_pp} 
\end{align} 

When $q>p$, 
let $l_{0} = [q/p)$. Then $0 <q-pl_{0} \leq p$. 
By Lemma \ref{L_star_l} (iv) and (i), Theorem \ref{Lax02} (iii) and \eqref{C_l_0_02} in \S \ref{section_Existence of oscillatory integrals}, 
\begin{align} 
\tilde{I}_{p,q}^{+} 
&= \lim_{\varepsilon \to +0} \int_{0}^{\infty} e^{ix^{p}} x^{q-1} \chi_{\varepsilon}(x) dx 
= \lim_{\varepsilon \to +0} \int_{0}^{\infty} e^{ix^{p}} L^{*l_{0}}(x^{q-1} \chi_{\varepsilon}(x)) dx \notag \\ 
&= \lim_{\varepsilon \to +0} \int_{0}^{\infty} e^{ix^{p}} \left( \frac{i}{p} \right)^{l_{0}} \sum_{j=0}^{l_{0}} C_{l_{0},j} x^{q-1-pl_{0}+j} \chi_{\varepsilon}^{(j)}(x) dx \notag \\ 
&= \left( \frac{i}{p} \right)^{l_{0}} C_{l_{0},0} \lim_{\varepsilon \to +0} \int_{0}^{\infty} e^{ix^{p}} x^{q-pl_{0}-1} \chi_{\varepsilon}(x) dx \notag \\ 
&= \left( \frac{i}{p} \right)^{l_{0}} \prod_{s=1}^{l_{0}} (q-ps) \tilde{I}_{p,q-pl_{0}}^{+} 
= e^{i\frac{\pi}{2} l_{0}} \prod_{s=1}^{l_{0}} \left( \frac{q}{p}-s \right) \tilde{I}_{p,q-pl_{0}}^{+}. 
\label{I_pq02} 
\end{align} 
When $q-pl_{0} = p$, that is, $q=p(l_{0}+1)$, then by \eqref{I_pq02} with \eqref{I_pp}, 
\begin{align} 
\tilde{I}_{p,q}^{+} 
&= \tilde{I}_{p,p(l_{0}+1)}^{+} 
= e^{i\frac{\pi}{2} l_{0}} \prod_{s=1}^{l_{0}} \{ (l_{0}+1)-s \} \tilde{I}_{p,p}^{+} 
= e^{i\frac{\pi}{2} l_{0}} l_{0}! \frac{i}{p} \notag \\ 
&= p^{-1} e^{i\frac{\pi}{2} (l_{0}+1)} \varGamma (l_{0}+1) 
= p^{-1} e^{i\frac{\pi}{2} \frac{q}{p}} \varGamma \left( \frac{q}{p} \right). 
\notag 
\end{align} 
When $q-pl_{0} < p$, then 
by \eqref{I_pq02} with \eqref{Generalized Fresnel Oscillatory integral p > q > 0}, 
\begin{align} 
\tilde{I}_{p,q}^{+} 
&= e^{i\frac{\pi}{2} l_{0}} \prod_{s=1}^{l_{0}} \left( \frac{q}{p}-s \right) p^{-1} e^{i\frac{\pi}{2} \frac{q-pl_{0}}{p}} \varGamma \left( \frac{q-pl_{0}}{p} \right) \notag \\ 
&= p^{-1} e^{i\frac{\pi}{2} l_{0}} e^{i\frac{\pi}{2} \left( \frac{q}{p} - l_{0} \right)} \prod_{s=1}^{l_{0}} \left( \frac{q}{p}-s \right) \varGamma \left( \frac{q}{p} - l_{0} \right) 
= p^{-1} e^{i\frac{\pi}{2} \frac{q}{p}} \varGamma \left( \frac{q}{p} \right). 
\notag 
\end{align} 

(ii) 
The function $e^{\pm i\frac{\pi}{2} z}$ are non-zero holomorphic on $\mathbb{C}$. 
The Gamma function $\varGamma (z)$ can be extended non-zero meromorphic on $\mathbb{C}$ with poles of order 1 at $z = -j$ for $j \in \mathbb{N}$ by analytic continuation. 
The function $f(q) = q/p$ is holomorphic on $\mathbb{C}$ for each $p \in \mathbb{C} \setminus \{ 0 \}$. 
The function $g(p) = q/p$ is holomorphic on $\mathbb{C} \setminus \{ 0 \}$ for each $q \in \mathbb{C}$. 
Therefore 
$\tilde{I}_{p,q}^{+} = p^{-1} e^{i\frac{\pi}{2} f(q)} \varGamma (f(q))$ can be extended non-zero meromorphic on $\mathbb{C}$ with poles of order 1 at $q = -pj$ for $j \in \mathbb{N}$ as to $q$ for each $p \in \mathbb{C} \setminus \{ 0 \}$, 
and $\tilde{I}_{p,q}^{+} = p^{-1} e^{i\frac{\pi}{2} g(p)} \varGamma (g(p))$ can be extended meromorphic on $\mathbb{C} \setminus \{ 0 \}$ with poles of order 1 at $p = -q/j$ for $j \in \mathbb{N}$ as to $p$ for each $q \in \mathbb{C}$. 
\end{proof} 

Using the theorem above, 
we can extend the Euler Beta function as follows. 
\begin{prop}[\cite{Nagano-Miyazaki03},\cite{Nagano-Miyazaki04}] 
Assume that $p_{j} > 0$ and $q_{j} 
\in \mathbb{C} \setminus \{ -p_{j}\mathbb{N} \}$ for $j=1,2,3$. 
Let 
\begin{align} 
\tilde{B}^{\pm}(p_{1},p_{2},p_{3};q_{1},q_{2},q_{3}) 
:= e^{\mp i \frac{\pi}{2} \left( \frac{q_{1}}{p_{1}} + 
\frac{q_{2}}{p_{2}} - \frac{q_{3}}{p_{3}} \right)} 
\frac{p_{1} p_{2}}{p_{3}} \frac{\tilde{I}_{p_{1},q_{1}}^{\pm} 
\tilde{I}_{p_{2},q_2}^{\pm}}{\tilde{I}_{p_{3},q_{3}}^{\pm}}. \notag 
\end{align} 
Then 
\begin{align} 
\tilde{B}^{\pm}(1,1,1;q_{1},q_{2},q_{1}+q_{2}) = B(q_{1},q_{2}), \notag 
\end{align} 
where $B(x,y)$ is the Euler Beta function, 
$\tilde{I}_{p_{1},q_{1}}^{\pm}, \tilde{I}_{p_{2},q_2}^{\pm}$ and $\tilde{I}_{p_{3},q_{3}}^{\pm}$ are generalized Fresnel integrals defined by \eqref{generalized_Fresnel_integral_def}. 
\end{prop}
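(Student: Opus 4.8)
The plan is to prove the identity by direct substitution of the closed form for generalized Fresnel integrals from Theorem \ref{th01}, followed by elementary simplification; no analytic estimates are required. By Theorem \ref{th01} (i) together with the meromorphic extension in Theorem \ref{th01} (ii), for each admissible pair we may use $\tilde{I}_{p,q}^{\pm} = p^{-1} e^{\pm i\frac{\pi}{2}\frac{q}{p}} \varGamma(q/p)$. The hypothesis $q_{j} \in \mathbb{C} \setminus \{ -p_{j}\mathbb{N} \}$ keeps each argument $q_{j}/p_{j}$ away from the poles of $\varGamma$ listed in Theorem \ref{th01} (ii), so every factor $\tilde{I}_{p_{j},q_{j}}^{\pm}$ is well-defined and, since $\varGamma$ is non-zero meromorphic, the denominator $\tilde{I}_{p_{3},q_{3}}^{\pm}$ is non-zero.

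First I would specialize to $p_{1}=p_{2}=p_{3}=1$ and $q_{3}=q_{1}+q_{2}$, which gives the coefficient $p_{1}p_{2}/p_{3}=1$ and the three factors $\tilde{I}_{1,q}^{\pm}=e^{\pm i\frac{\pi}{2}q}\varGamma(q)$ for $q=q_{1},q_{2},q_{1}+q_{2}$. Next I would handle the two sources of phase factors. The explicit prefactor specializes to $e^{\mp i\frac{\pi}{2}(q_{1}+q_{2}-(q_{1}+q_{2}))}=e^{0}=1$, while the exponential parts of the three Fresnel integrals combine as $e^{\pm i\frac{\pi}{2}q_{1}}\,e^{\pm i\frac{\pi}{2}q_{2}}/e^{\pm i\frac{\pi}{2}(q_{1}+q_{2})}=1$. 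Thus all phase factors cancel simultaneously, and what remains is $\varGamma(q_{1})\varGamma(q_{2})/\varGamma(q_{1}+q_{2})$, which is precisely $B(q_{1},q_{2})$ by the Euler identity.

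There is essentially no obstacle here beyond bookkeeping. The only points requiring care are verifying that the phase cancellation holds for both signs $\pm$ at once (which is immediate under the same-order convention for the double sign), and confirming that the closed form may be applied for \emph{complex} $q_{j}$ rather than merely the real range of Theorem \ref{th01} (i); this is supplied exactly by the meromorphic continuation in Theorem \ref{th01} (ii). Consequently the proof reduces to the substitution and cancellation described above.
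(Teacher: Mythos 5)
Your proof is correct and follows essentially the same route as the paper: specialize to $p_{1}=p_{2}=p_{3}=1$, $q_{3}=q_{1}+q_{2}$, use the closed form $\tilde{I}_{1,q}^{\pm}=e^{\pm i\frac{\pi}{2}q}\varGamma(q)$ supplied by the meromorphic continuation of Theorem \ref{th01} (ii), and observe that all phase factors cancel, leaving $\varGamma(q_{1})\varGamma(q_{2})/\varGamma(q_{1}+q_{2})=B(q_{1},q_{2})$. The only point the paper makes slightly more explicit is that the hypothesis on $q_{3}$ (namely $q_{1}+q_{2}\in\mathbb{C}\setminus\{-\mathbb{N}\}$) is what keeps the denominator away from the poles of $\varGamma$, which you also invoke.
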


\begin{proof} 
If $p_{j} = 1$ for $j=1,2,3$, 
since $q_{1}+q_{2} \in \mathbb{C} \setminus \{ -\mathbb{N} \}$, 
by Theorem \ref{th01} (ii), 
\begin{align} 
\tilde{B}^{\pm}(1,1,1;q_{1},q_{2},q_{1}+q_{2}) 
&= \frac{\tilde{I}_{1,q_{1}}^{\pm} \tilde{I}_{1,q_2}^{\pm}}{\tilde{I}_{1,q_{1}+q_{2}}^{\pm}} 
= \frac{e^{\mp i \frac{\pi}{2} q_{1}} \tilde{I}_{1,q_{1}}^{\pm} \cdot e^{\mp i \frac{\pi}{2} q_{2}} \tilde{I}_{1,q_{2}}^{\pm}}
{e^{\mp i \frac{\pi}{2} (q_{1}+q_{2}) } \tilde{I}_{1,q_{1}+q_{2}}^{\pm}} \notag \\ 
&= \frac{\varGamma (q_{1}) \varGamma (q_{2})}{\varGamma (q_{1}+q_{2})} 
= B(q_{1},q_{2}). \notag 
\end{align} 
\end{proof} 

\section{Applications to asymptotic expansions} 
\label{Applications_to_asymptotic expansions} 

In this section, 
we consider applications of generalized Fresnel integrals to asymptotic expansions of oscillatory integrals, which give extension of the method of stationary phase. 

By Theorem \ref{Lax02} and Corollary \ref{Os_m} in \S \ref{section_Existence of oscillatory integrals} with Theorem \ref{th01} in \S \ref{section_Generalized_Fresnel_Integrals}, 
we obtain the following theorem: 
\begin{thm}[\cite{Nagano-Miyazaki03},\cite{Nagano-Miyazaki04}] 
\label{th02} 
Assume that 
$\lambda > 0$ and $a \in \mathcal{A}^{\tau}_{\delta}(\mathbb{R})$. 
Let 
$\varphi \in C^{\infty}_{0}(\mathbb{R})$ be a cut-off function such that $\varphi \equiv 1$ on $|x| \leq r_{0}$ with $r_{0} \geq 1$ and $\varphi \equiv 0$ on $|x| \geq r_{1} > r_{0}$, and $\psi := 1 - \varphi$. 
Then the following hold: 
\begin{enumerate} 
\item[(i)] 
If $p>0$, then for any $N \in \mathbb{N}$, 
\begin{align} 
\tilde{I}_{p,1}^{\pm}[a](\lambda) 
:= Os\text{-}\int_{\bold{0}}^{\infty} e^{\pm i\lambda x^{p}} a(x) dx 
= \sum_{k=0}^{N-1} \tilde{I}_{p,k+1}^{\pm} \frac{a^{(k)}(0)}{k!} \lambda ^{-\frac{k+1}{p}} + R^{\pm}_{p,N} (\lambda) 
\label{j=0_integral} 
\end{align} 
with 
\begin{align} 
R^{\pm}_{p,N} (\lambda) 
= \sum_{k=N}^{N+[p]-1} \tilde{I}_{p,k+1}^{\pm} \frac{a^{(k)}(0)}{k!} \lambda ^{-\frac{k+1}{p}} 
+ \tilde{I}_{p,N+[p]+1}^{\pm}[\tilde{a}](\lambda) 
+ \tilde{I}_{p,1}^{\pm}[a \psi](\lambda) 
\label{j=0_remainder} 
\end{align} 
and 
\begin{align} 
\tilde{a}(x) 
:= \frac{1}{(N+[p]-1)!} \int_{0}^{1} (1-\theta)^{N+[p]-1} (a\varphi)^{(N+[p])} (\theta x) d\theta, 
\label{tilde_a_def} 
\end{align} 
where 
\begin{align} 
\tilde{I}_{p,k+1}^{\pm} 
:= Os\text{-}\int_{0}^{\infty} e^{\pm ix^{p}} x^{k} dx 
= p^{-1} e^{\pm i\frac{\pi}{2} \frac{k+1}{p}} \varGamma \left( \frac{k+1}{p} \right). 
\label{tilde_I_p_k+1_pm} 
\end{align} 
Then there exists a positive constant $M_{p,N}$ such that for any $\lambda \geq 1$, 
\begin{align} 
&|R^{\pm}_{p,N} (\lambda)| \leq M_{p,N} \lambda^{-\frac{N+1-(p-[p])}{p}}. 
\label{j=0_remainder02} 
\end{align} 

\item[(ii)] 
If $m \in \mathbb{N}$, 
then for any $N \in \mathbb{N}$, 
\begin{align} 
\tilde{J}_{m,1}^{\pm}[a](\lambda) 
:= Os\text{-}\int_{\bm{-\infty}}^{\infty} e^{\pm i\lambda x^{m}} a(x) dx 
= \sum_{k=0}^{N-1} c_{m,k}^{\pm} \frac{a^{(k)}(0)}{k!} \lambda ^{-\frac{k+1}{m}} + \tilde{R}^{\pm}_{m,N} (\lambda) 
\label{Os_int_m_01} 
\end{align} 
with 
\begin{align} 
&\tilde{R}^{\pm}_{m,N} (\lambda) 
= \sum_{k=N}^{N+m-1} c_{m,k}^{\pm} \frac{a^{(k)}(0)}{k!} \lambda ^{-\frac{k+1}{m}} 
+ \tilde{J}_{m,N+m+1}^{\pm}[\tilde{a}](\lambda) 
+ \tilde{J}_{m,1}^{\pm}[a \psi](\lambda) 
\label{j=0_remainder_m} 
\end{align} 
and 
\begin{align} 
c_{m,k}^{\pm} 
:&= \tilde{I}_{m,k+1}^{\pm} + (-1)^{k} \tilde{I}_{m,k+1}^{\pm \pm^{m}}, 
\label{c_mk^pm} 
\end{align} 
where 
\begin{align} 
\tilde{I}_{m,k+1}^{\pm \pm^{m}} 
:= Os\mbox{-}\int_{0}^{\infty} e^{\pm (-1)^{m} iy^{m}} y^{k} dy 
= m^{-1} e^{\pm (-1)^{m} i\frac{\pi}{2} \frac{k+1}{m}} \varGamma \left( \frac{k+1}{m} \right). 
\label{generalized_Fresnel_integral_def_pm_pm_m_type02} 
\end{align} 
And then there exists a positive constant $\tilde{M}_{m,N}$ such that for any $\lambda \geq 1$, 
\begin{align} 
&|\tilde{R}^{\pm}_{m,N} (\lambda)| \leq \tilde{M}_{m,N} \lambda^{-\frac{N+1}{m}}. 
\label{j=0_remainder_m_est} 
\end{align} 
\end{enumerate} 
\end{thm}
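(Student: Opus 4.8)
The plan is to reduce both parts to a single Taylor-expansion argument on the half-line, proving (i) directly and then obtaining (ii) by gluing two copies of (i) via Corollary \ref{Os_m}.

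For (i) I would first invoke Theorem \ref{Lax02}(iv) to split $\tilde{I}_{p,1}^{\pm}[a](\lambda) = \tilde{I}_{p,1}^{\pm}[a\varphi](\lambda) + \tilde{I}_{p,1}^{\pm}[a\psi](\lambda)$. The tail $\tilde{I}_{p,1}^{\pm}[a\psi](\lambda)$ is supported in $|x|\geq 1$, so Theorem \ref{Lax02}(ii) makes it $O(\lambda^{-l})$ for every $l\geq l_{p,1}$; taking $l$ large throws it into the remainder. For the central piece I expand $a\varphi$ by Taylor's formula with integral remainder to order $N+[p]$, namely $a\varphi(x)=\sum_{k=0}^{N+[p]-1}\frac{a^{(k)}(0)}{k!}x^{k}+x^{N+[p]}\tilde{a}(x)$ with $\tilde{a}$ exactly as in \eqref{tilde_a_def}, using $(a\varphi)^{(k)}(0)=a^{(k)}(0)$ since $\varphi\equiv 1$ near $0$. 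Inserting this identity and using linearity of the oscillatory integral (each summand converges by Theorem \ref{Lax02}), I then apply the scaling identity $Os\text{-}\int_{0}^{\infty}e^{\pm i\lambda x^{p}}x^{k}\,dx=\lambda^{-(k+1)/p}\tilde{I}_{p,k+1}^{\pm}$, which follows from the change of variable $t=\lambda^{1/p}x$ in the regularized integrals together with the $\chi$-independence guaranteed by Theorem \ref{Lax02} (the rescaled cut-off $\chi(\lambda^{-1/p}\,\cdot\,)$ is again an admissible convergence factor), and I evaluate $\tilde{I}_{p,k+1}^{\pm}$ by Theorem \ref{th01}. Splitting the resulting monomial sum at $k=N$ yields the genuine series $\sum_{k=0}^{N-1}$ plus the excess terms $k=N,\dots,N+[p]-1$, while the Taylor tail becomes $\tilde{I}_{p,N+[p]+1}^{\pm}[\tilde{a}](\lambda)$; this is precisely the remainder \eqref{j=0_remainder}.

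To prove the bound \eqref{j=0_remainder02} I estimate the three pieces of $R_{p,N}^{\pm}$ separately. The excess monomial terms are $O(\lambda^{-(N+1)/p})$, which is $\leq O(\lambda^{-(N+1-(p-[p]))/p})$ for $\lambda\geq 1$ because $p-[p]\geq 0$; the tail is $O(\lambda^{-l})$ for $l$ large. The dominant contribution is $\tilde{I}_{p,N+[p]+1}^{\pm}[\tilde{a}](\lambda)$: setting $q:=N+[p]+1$ one has $q>p$ (since $[p]+1>p$ and $N\geq 1$), so Theorem \ref{Lax02}(v) applies and delivers exactly $\lambda^{-(q-p)/p}=\lambda^{-(N+1-(p-[p]))/p}$. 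The key technical check here is that $\tilde{a}\in\mathcal{A}^{\tau}_{\delta}(\mathbb{R})$ for admissible parameters, which holds because $a\varphi\in C^{\infty}_{0}(\mathbb{R})$ makes all derivatives of $\tilde{a}$ bounded and, for large $x$, the vanishing of $a\varphi$ forces $\tilde{a}$ and its derivatives to decay. Note that the deliberate choice of Taylor order $N+[p]$ (rather than $N$) is what guarantees $q>p$ while only producing terms subordinate to the final remainder order.

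For (ii), Corollary \ref{Os_m} writes $\tilde{J}_{m,1}^{\pm}[a](\lambda)$ as $\tilde{I}_{m,1}^{\pm}[a](\lambda)+\tilde{I}_{m,1}^{\pm\pm^{m}}[a(-\,\cdot\,)](\lambda)$. Applying part (i) to each (with $p=m$, so $m-[m]=0$), and using $(a(-\,\cdot\,))^{(k)}(0)=(-1)^{k}a^{(k)}(0)$ together with the reflected sign $\pm\pm^{m}$, the two series combine into the coefficients $c_{m,k}^{\pm}=\tilde{I}_{m,k+1}^{\pm}+(-1)^{k}\tilde{I}_{m,k+1}^{\pm\pm^{m}}$ of \eqref{c_mk^pm}, with the reflected Fresnel constant evaluated by Theorem \ref{th01} to give \eqref{generalized_Fresnel_integral_def_pm_pm_m_type02}. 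The two one-sided remainders recombine, again through Corollary \ref{Os_m}, into the two-sided terms $\tilde{J}_{m,N+m+1}^{\pm}[\tilde{a}](\lambda)$ and $\tilde{J}_{m,1}^{\pm}[a\psi](\lambda)$ of \eqref{j=0_remainder_m}, and since $m-[m]=0$ the bound from (i) specializes to \eqref{j=0_remainder_m_est}. I expect the main obstacle to be the bookkeeping in part (i): verifying that the Taylor tail $\tilde{a}$ lies in an amplitude class with parameters sharp enough for Theorem \ref{Lax02}(v) to yield the exponent $-(N+1-(p-[p]))/p$, and confirming that this single term genuinely dominates the excess monomials and the smooth tail.
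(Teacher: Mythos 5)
Your proposal is correct and follows essentially the same route as the paper: the split $a = a\varphi + a\psi$ via Theorem \ref{Lax02}(iv), Taylor expansion of $a\varphi$ to order $N+[p]$, the scaling $x=\lambda^{-1/p}y$ combined with Theorem \ref{th01} for the monomial terms, Theorem \ref{Lax02}(v) applied to $\tilde{I}^{\pm}_{p,N+[p]+1}[\tilde a]$ (using $N+[p]+1>p$) for the dominant remainder, and reduction of (ii) to (i) through Corollary \ref{Os_m}. Your remark that $\tilde a$ is only shown to lie in $\mathcal{A}^{\tau}_{\delta}(\mathbb{R})$ by decay of $(a\varphi)^{(N+[p])}(\theta x)$ for large $x$ is in fact slightly more careful than the paper, which asserts $\tilde a\in C^{\infty}_{0}(\mathbb{R})$ even though $\tilde a$ need not have compact support.
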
 

\begin{proof} 
(i) 
Suppose $p > 0$. 
By Theorem \ref{Lax02} (iv) in \S \ref{section_Existence of oscillatory integrals}, 
there exist oscillatory integrals $\tilde{I}_{p,1}^{\pm}[a](\lambda)$, $\tilde{I}_{p,1}^{\pm}[a \varphi](\lambda)$, $\tilde{I}_{p,1}^{\pm}[a \psi](\lambda)$ with 
\begin{align} 
\tilde{I}_{p,1}^{\pm}[a](\lambda) 
= \tilde{I}_{p,1}^{\pm}[a \varphi](\lambda) + \tilde{I}_{p,1}^{\pm}[a \psi](\lambda). 
\label{I_1+I_2} 
\end{align} 

As to $\tilde{I}_{p,1}^{\pm}[a \varphi](\lambda)$ in \eqref{I_1+I_2}, applying Taylor's formula to $a(x) \varphi(x)$ at $x=0$, we have 
\begin{align} 
\tilde{I}_{p,1}^{\pm}[a \varphi](\lambda) 
&= Os\text{-}\int_{0}^{\infty} e^{\pm i\lambda x^{p}} \Bigg( \sum_{k=0}^{N+[p]-1} \frac{a^{(k)}(0)}{k!} x^{k} + x^{N+[p]} \tilde{a}(x) \Bigg) dx, 
\notag 
\end{align} 
where $\tilde{a}(x) $ is defined by \eqref{tilde_a_def}. 
Since $a\varphi \in C^{\infty}_{0}(\mathbb{R})$ in \eqref{tilde_a_def} with \eqref{inclusion_rel_01}, $\tilde{a} \in C^{\infty}_{0}(\mathbb{R}) \subset \mathcal{A}^{\tau}_{\delta}(\mathbb{R})$. 
Hence by Theorem \ref{Lax02} (iv) in \S \ref{section_Existence of oscillatory integrals}, there exist oscillatory integrals $\tilde{I}_{p,k+1}^{\pm}[1](\lambda)$ for $k=0,\dots,N+[p]-1$ and $\tilde{I}_{p,N+[p]+1}^{\pm}[\tilde{a}](\lambda)$ with 
\begin{align} 
\tilde{I}_{p,1}^{\pm}[a \varphi](\lambda) 
= \sum_{k=0}^{N+[p]-1} \frac{a^{(k)}(0)}{k!} Os\text{-}\int_{0}^{\infty} e^{\pm i\lambda x^{p}} x^{k} dx 
+ \tilde{I}_{p,N+[p]+1}^{\pm}[\tilde{a}](\lambda). 
\notag 
\end{align} 
Changing of variable $x=\lambda^{-1/p}y$ with Theorem \ref{th01} (i) in \S \ref{section_Generalized_Fresnel_Integrals} leads to 
\begin{align} 
\tilde{I}_{p,1}^{\pm}[a \varphi](\lambda) 
&= \sum_{k=0}^{N+[p]-1} \tilde{I}_{p,k+1}^{\pm} \frac{a^{(k)}(0)}{k!} \lambda ^{-\frac{k+1}{p}} 
+ \tilde{I}_{p,N+[p]+1}^{\pm}[\tilde{a}](\lambda), 
\label{I_1} 
\end{align} 
where $\tilde{I}_{p,k+1}^{\pm}$ is defined by \eqref{tilde_I_p_k+1_pm}. 

As to $\tilde{I}_{p,N+[p]+1}^{\pm}[\tilde{a}](\lambda)$ in \eqref{I_1}, 
since $N+[p]+1 > p$, by Theorem \ref{Lax02} (v) in \S \ref{section_Existence of oscillatory integrals}, for any $\lambda \geq 1$, 
\begin{align} 
| \tilde{I}_{p,N+[p]+1}^{\pm}[\tilde{a}](\lambda) | 
&\leq C_{p,N+[p]+1} |\tilde{a}|^{(\tau)}_{l_{0} + l_{p,N+[p]+1}} \lambda^{-\frac{N+1-(p-[p])}{p}}, 
\label{I_2_lambda_est} 
\end{align} 
where $C_{p,N+[p]+1}$ is a positive constant, $l_{0} := [(N+[p]+1)/p)$ and $l_{p,N+[p]+1} := [(N+[p]+1+\tau)^{+} /(p-1-\delta)]+1$. 

By \eqref{I_2_lambda_est}, any term in \eqref{I_1} where the order of $\lambda$ is less than or equal to $-\frac{N+1-(p-[p])}{p}$ can be turned into the remainder term. 
In fact, since $[p] \leq p < [p]+1$, for any $\lambda \geq 1$, 
\begin{align} 
\lambda^{-\frac{N+1}{p}} \leq \lambda^{-\frac{N+1-(p-[p])}{p}} < \lambda^{-\frac{N}{p}}. 
\label{I_1_lambda_est02} 
\end{align} 

As to $\tilde{I}_{p,1}^{\pm}[a \psi](\lambda)$ in \eqref{I_1+I_2}, 
by Theorem \ref{Lax02} (ii) in \S \ref{section_Existence of oscillatory integrals}, 
for each $l \in \mathbb{N}$ such that $l \geq l_{p,1}$, there exists a positive constant $\tilde{C}^{(0)}_{l}$ such that for any $\lambda > 0$, 
\begin{align} 
| \tilde{I}_{p,1}^{\pm}[a \psi](\lambda) | 
\leq \tilde{C}^{(0)}_{l} |a|^{(\tau)}_{l} \lambda^{-l}, 
\label{tilde_I_p_1_est_02} 
\end{align} 
where $l_{p,1} := [(1+\tau)^{+} /(p-1-\delta)]+1$. 

Therefore 
taking $l$ such that $l \geq \max \{ l_{p,1},\frac{N+1-(p-[p])}{p} \}$ and defining $R^{\pm}_{p,N} (\lambda)$ by \eqref{j=0_remainder}, according to \eqref{I_1+I_2}, \eqref{I_1}, \eqref{I_2_lambda_est}, \eqref{I_1_lambda_est02} and \eqref{tilde_I_p_1_est_02}, 
for any $N \in \mathbb{N}$, we obtain \eqref{j=0_integral} and \eqref{j=0_remainder02}. 

(ii) 
If $m \in \mathbb{N}$, by \eqref{Os_int_-infty_0_e^pm_i_lambda_x^m_a(x)_dx_def01} in \S \ref{section_Existence of oscillatory integrals} and (i), for any $N \in \mathbb{N}$, we have 
\begin{align} 
Os\text{-}\int_{-\infty}^{0} e^{\pm i\lambda x^{m}} a(x) dx 
&= Os\text{-}\int_{0}^{\infty} e^{\pm (-1)^{m}i\lambda y^{m}} a(-y) dy \notag \\ 
&= \sum_{k=0}^{N-1} (-1)^{k} \tilde{I}_{m,k+1}^{\pm \pm^{m}} \frac{a^{(k)}(0)}{k!} \lambda ^{-\frac{k+1}{m}} + R^{\pm \pm^{m}}_{m,N} (\lambda) 
\label{Os_int_-infty_0_e_i_lambda_x_m_a(x)_dx02} 
\end{align} 
with 
\begin{align} 
&R^{\pm \pm^{m}}_{m,N} (\lambda) 
= Os\text{-}\int_{-\infty}^{0} e^{\pm i\lambda x^{m}} a (x) \psi (x) dx 
+ \sum_{k=N}^{N+m-1} (-1)^{k} \tilde{I}_{m,k+1}^{\pm \pm^{m}} \frac{a^{(k)}(0)}{k!} \lambda ^{-\frac{k+1}{m}} \notag \\ 
&+ Os\text{-}\int_{-\infty}^{0} e^{\pm i\lambda x^{m}} \frac{x^{N+m}}{(N+m-1)!} \int_{0}^{1} (1-\theta)^{N+m-1} (a\varphi)^{(N+m)} (\theta x) d\theta dx, 
\label{j=0_remainder_m_2} 
\end{align} 
where $\tilde{I}_{m,k+1}^{\pm \pm^{m}}$ are defined by \eqref{generalized_Fresnel_integral_def_pm_pm_m_type02}. 
And then there exists a positive constant $M'_{m,N}$ such that for any $\lambda \geq 1$, 
\begin{align} 
&|\tilde{R}^{\pm \pm^{m}}_{m,N} (\lambda)| \leq M'_{m,N} \lambda^{-\frac{N+1}{m}}. 
\label{j=0_remainder_m_est02} 
\end{align} 
Therefore plugging \eqref{Os_int_-infty_0_e_i_lambda_x_m_a(x)_dx02}, \eqref{j=0_remainder_m_2} \eqref{j=0_remainder_m_est02} into \eqref{j=0_integral}, \eqref{j=0_remainder}, \eqref{j=0_remainder02} when $p=m$, by \eqref{Os_int_-infty_infty_e^pm_i_lambda_x^m_a(x)_dx_def01}, we obtain \eqref{Os_int_m_01}, \eqref{j=0_remainder_m} and \eqref{j=0_remainder_m_est}. 
\end{proof} 

The \eqref{Os_int_m_01} with \eqref{j=0_remainder_m} and \eqref{j=0_remainder_m_est} deduces the equations corresponding to (7.7.30) and (7.7.31) in \cite{Hormander02}. 
\begin{cor}[\cite{Nagano-Miyazaki04}] 
\label{cor01} 
Let $a \in \mathcal{A}^{\tau}_{\delta}(\mathbb{R})$. Then the following hold: 
\begin{enumerate} 
\item[(i)] 
If $m=2l-1$ for $l \in \mathbb{N}$, then for any $N \in \mathbb{N}$, 
\begin{align} 
&Os\text{-}\int_{-\infty}^{\infty} e^{\pm i\lambda x^{2l-1}} a(x) dx 
= \frac{2}{2l-1} \sum_{k=0}^{N-1} \left\{ \cos \frac{\pi (2k+1)}{2(2l-1)} \varGamma \left( \frac{2k+1}{2l-1} \right) \frac{a^{(2k)}(0)}{(2k)!} \lambda ^{-\frac{2k+1}{2l-1}} \right. \notag \\ 
&\hspace{2.95cm}\left. \pm i \sin \frac{\pi (2k+2)}{2(2l-1)} \varGamma \left( \frac{2k+2}{2l-1} \right) \frac{a^{(2k+1)}(0)}{(2k+1)!} \lambda ^{-\frac{2k+2}{2l-1}} \right\} + O\Big( \lambda ^{-\frac{N+1}{2l-1}} \Big) 
\label{Os_int_m=2l-1_01} 
\end{align} 
as $\lambda \to \infty$. 

\item[(ii)] 
If $m=2l$ for $l \in \mathbb{N}$, then for any $N \in \mathbb{N}$, 
\begin{align} 
Os\text{-}\int_{-\infty}^{\infty} e^{\pm i\lambda x^{2l}} a(x) dx 
= \frac{1}{l} \sum_{k=0}^{N-1} e^{\pm i\frac{\pi}{2} \frac{2k+1}{2l}} \varGamma \left( \frac{2k+1}{2l} \right) \frac{a^{(2k)}(0)}{(2k)!} \lambda ^{-\frac{2k+1}{2l}} + O\Big( \lambda ^{-\frac{N}{l}} \Big) 
\label{Os_int_m=2l_01} 
\end{align} 
as $\lambda \to \infty$. 
\end{enumerate} 
\end{cor}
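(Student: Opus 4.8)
The plan is to derive both formulas directly from Theorem \ref{th02}(ii) by substituting the explicit values of the coefficients $c_{m,k}^{\pm}$ and then splitting the sum according to the parity of $k$; beyond this, only elementary trigonometric bookkeeping and a reindexing are required, since all the analytic content (existence, the expansion, and the remainder bound \eqref{j=0_remainder_m_est}) is already supplied by that theorem. First I would combine \eqref{c_mk^pm}, \eqref{tilde_I_p_k+1_pm} and \eqref{generalized_Fresnel_integral_def_pm_pm_m_type02} into the single expression
\[
c_{m,k}^{\pm}
= m^{-1}\varGamma\!\left(\tfrac{k+1}{m}\right)\Big[\, e^{\pm i\frac{\pi}{2}\frac{k+1}{m}} + (-1)^{k}\, e^{\pm (-1)^{m} i\frac{\pi}{2}\frac{k+1}{m}} \,\Big],
\]
so that the whole computation reduces to understanding how the second exponential interacts with the first once the sign $(-1)^{m}$ is fixed. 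Writing $\theta_{k} := \frac{\pi}{2}\frac{k+1}{m}$ throughout keeps the notation compact.

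For part (i), $m=2l-1$ is odd, hence $(-1)^{m}=-1$ and the second phase is the complex conjugate of the first. I would then apply Euler's formula inside the bracket: for even $k$ it collapses to $2\cos\theta_{k}$, giving a real coefficient attached to $a^{(k)}(0)$, while for odd $k$ it collapses to $\pm 2i\sin\theta_{k}$, the outer sign being inherited from the two oscillatory directions. Setting $k=2j$ and $k=2j+1$ respectively, simplifying $\theta_{2j}=\frac{\pi(2j+1)}{2(2l-1)}$ and $\theta_{2j+1}=\frac{\pi(2j+2)}{2(2l-1)}$, and relabeling $j$ as the summation index $k$ reproduces exactly the paired summand of \eqref{Os_int_m=2l-1_01}, with its cosine term on even-order derivatives and its $\pm i\sin$ term on odd-order derivatives.

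For part (ii), $m=2l$ is even, so $(-1)^{m}=+1$ and the two exponentials coincide; the bracket becomes $(1+(-1)^{k})\,e^{\pm i\theta_{k}}$. Consequently $c_{m,k}^{\pm}=2\tilde{I}_{m,k+1}^{\pm}$ when $k$ is even and $c_{m,k}^{\pm}=0$ when $k$ is odd, so that every odd-order term is annihilated and only the even-order derivatives survive. Reindexing by $k=2j$ and using $2m^{-1}=1/l$ yields \eqref{Os_int_m=2l_01}. In both parts the displayed partial sum is obtained by invoking Theorem \ref{th02}(ii) with the truncation order chosen large enough that the retained indices exhaust the relevant parity class up to the stated order, and the remainder is then read off from \eqref{j=0_remainder_m_est}; in the even case the vanishing of the odd-index term even permits a slightly sharper remainder order than the one displayed.

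The computation is entirely routine, so the single point that genuinely demands care is the sign bookkeeping in the combined coefficient. One must keep the outer $\pm$, recording the two oscillatory directions, strictly separate from the internal sign $\pm^{m}=(-1)^{m}$, which enters through the reflection $x\mapsto -x$ encoded in \eqref{generalized_Fresnel_integral_def_pm_pm_m_type02}, and then pair each of these correctly against the factor $(-1)^{k}$. Producing the imaginary sine terms with precisely the right overall $\pm i$ in the odd case, and verifying the exact cancellation of the odd-$k$ terms in the even case, is the main (though elementary) obstacle; once the bracket is correctly resolved, the remainder of the argument is substitution and reindexing.
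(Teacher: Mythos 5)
Your proposal is correct and follows essentially the same route as the paper: both derive the corollary from Theorem \ref{th02}(ii) by writing $c_{m,k}^{\pm}=m^{-1}\varGamma\big(\frac{k+1}{m}\big)\big[e^{\pm i\frac{\pi}{2}\frac{k+1}{m}}+(-1)^{k}e^{\pm(-1)^{m}i\frac{\pi}{2}\frac{k+1}{m}}\big]$, splitting by the parity of $k$, and using $e^{\pm i\theta}+e^{\mp i\theta}=2\cos\theta$, $e^{\pm i\theta}-e^{\mp i\theta}=\pm 2i\sin\theta$ in the odd case and the vanishing of the odd-index coefficients in the even case (where the paper, like you, applies the theorem with truncation order $2N-1$ to obtain the remainder $O(\lambda^{-N/l})$). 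Your side remark that a slightly sharper remainder is available in the even case is also consistent with the paper's comparison to H\"ormander's (7.7.31).
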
 

\begin{proof} 
(i) By \eqref{Os_int_m_01}, \eqref{j=0_remainder_m} and \eqref{c_mk^pm}, for any $N \in \mathbb{N}$, we have 
\begin{align} 
Os\text{-}\int_{-\infty}^{\infty} e^{\pm i\lambda x^{2l-1}} a(x) dx 
= \sum_{j=0}^{N-1} c_{2l-1,j}^{\pm} \frac{a^{(j)}(0)}{j!} \lambda ^{-\frac{j+1}{2l-1}} + O\Big( \lambda ^{-\frac{N+1}{2l-1}} \Big) 
\notag 
\end{align} 
as $\lambda \to \infty$, 
with 
\begin{align} 
c_{2l-1,j}^{\pm} 
&= (2l-1)^{-1} \left\{ e^{\pm i\frac{\pi}{2} \frac{j+1}{2l-1}} + (-1)^{j} e^{\mp i\frac{\pi}{2} \frac{j+1}{2l-1}} \right\} \varGamma \left( \frac{j+1}{2l-1} \right) \notag \\ 
&= 
\begin{cases} 
\dfrac{2}{2l-1} \cos \dfrac{\pi(2k+1)}{2(2l-1)} \varGamma \left( \dfrac{2k+1}{2l-1} \right), & \text{if $j=2k$}, \\ 
\dfrac{\pm 2i}{2l-1} \sin \dfrac{\pi(2k+2)}{2(2l-1)} \varGamma \left( \dfrac{2k+2}{2l-1} \right), & \text{if $j=2k+1$}, 
\end{cases} 
\notag 
\end{align} 
for $k \in \mathbb{Z}_{\geq 0}$. 
Here we used $e^{\pm i\theta} + e^{\mp i\theta} = 2\cos \theta$ and $e^{\pm i\theta} - e^{\mp i\theta} = \pm 2i\sin \theta$ for $\theta \in \mathbb{R}$. 
Hence 
we obtain \eqref{Os_int_m=2l-1_01}. 

(ii) By \eqref{Os_int_m_01}, \eqref{j=0_remainder_m} and \eqref{c_mk^pm}, for any $N \in \mathbb{N}$, we have 
\begin{align} 
Os\text{-}\int_{-\infty}^{\infty} e^{\pm i\lambda x^{2l}} a(x) dx 
= \sum_{j=0}^{(2N-1)-1} c_{2l,j}^{\pm} \frac{a^{(j)}(0)}{j!} \lambda ^{-\frac{j+1}{2l}} + O\Big( \lambda ^{-\frac{(2N-1)+1}{2l}} \Big) 
\notag 
\end{align} 
as $\lambda \to \infty$, 
with 
\begin{align} 
c_{2l,j}^{\pm} 
:= \tilde{I}_{2l,j+1}^{\pm} + (-1)^{j} \tilde{I}_{2l,j+1}^{\pm} 
&= 
\begin{cases} 
2 \tilde{I}_{2l,2k+1}^{\pm}, & \text{if $j=2k$}, \\ 
0, & \text{if $j=2k+1$}, 
\end{cases} 
\notag 
\end{align} 
for $k \in \mathbb{Z}_{\geq 0}$. 
Hence we obtain \eqref{Os_int_m=2l_01}. 
\end{proof} 
When $l=1$, Corollary \ref{cor01} (i) and (ii) are equivalent to the property that the Fourier transform of $a(x)$ is of $O(\lambda ^{-N})~(\lambda \to \infty)$ for any $N \in \mathbb{N}$ in $\mathcal{S}(\mathbb{R})$ and the method of stationary phase in one variable respectively. 
Therefore Theorem \ref{th02} (ii) can be considered to give the unified formula including the Fourier transform and the method of stationary phase in one variable. 

By using Theorem \ref{th02}, 
we obtain the following results: 
\begin{thm}[\cite{Nagano01},\cite{Nagano02}] 
\label{analytic_phase} 
Assume that $p > 0$. 
Let $\{ a_{j} \}_{j \in \mathbb{N}}$ be a sequence of real numbers 
such that $l_{0} := \sup_{j \in \mathbb{N}} |a_{j}|^{1/j} < \infty$. 
Then 
\begin{align} 
f(x) := x \bigg( 1 + \sum_{j=1}^{\infty} a_{j} x^{j} \bigg) ^{1/p} 
\label{f(x)=x(1+sum_j=1^infty_a_j_x^j)^1/p} 
\end{align} 
can be defined for $|x| < R_{0} := (2l_{0})^{-1}$ where $R_{0}=\infty$ if $l_{0}=0$, and 
there exists a connected open neighborhood $U$ of the origin in $\mathbb{R}$ such that $U \subset (-R_{0},R_{0})$ 
and a diffeomorphism $x = \varPhi (y)$ of class $C^{\infty}$ for $x,y \in U$ such that $f(x) = y$, $\varPhi (U \cap [0,\infty)) = U \cap [0,\infty)$ and $\varPhi (U \cap (-\infty,0]) = U \cap (-\infty,0]$, and for any $a \in C^{\infty}_{0}(\mathbb{R})$ such that $\mathrm{supp}~a \subset U$, 
the following hold: 

\begin{enumerate} 
\item[(i)] 
For any $N \in \mathbb{N}$, 
\begin{align} 
&\int_{0}^{\infty} e^{\pm i\lambda x^{p} \left( 1 + \sum_{j=1}^{\infty} a_{j} x^{j} \right)} a(x) dx \notag \\ 
&= \sum_{k=0}^{N-1} 
\frac{\tilde{I}_{p,k+1}^{\pm}}{k!} \left( \frac{d}{dy} \right)^{k} \bigg|_{y=0} \left\{ a(\varPhi (y)) \frac{d\varPhi}{dy}(y) \right\} \lambda ^{-\frac{k+1}{p}} 
+ O\Big( \lambda ^{-\frac{N+1-(p-[p])}{p}} \Big) 
\label{Os_int_analytic_p_01} 
\end{align} 
as $\lambda \to \infty$, 
where $\tilde{I}_{p,k+1}^{\pm}$ are defined by \eqref{tilde_I_p_k+1_pm}. 

\item[(ii)] 
If $m \in \mathbb{N}$, 
then for any $N \in \mathbb{N}$, 
\begin{align} 
&\int_{-\infty}^{\infty} e^{\pm i\lambda x^{m} \left( 1 + \sum_{j=1}^{\infty} a_{j} x^{j} \right)} a(x) dx \notag \\ 
&= \sum_{k=0}^{N-1} 
\frac{c_{m,k}^{\pm -}}{k!} \left( \frac{d}{dy} \right)^{k} \bigg|_{y=0} \left\{ a(\varPhi (y)) \frac{d\varPhi}{dy}(y) \right\} \lambda ^{-\frac{k+1}{m}} + O\Big( \lambda ^{-\frac{N+1}{m}} \Big) 
\label{Os_int_analytic_m_01} 
\end{align} 
as $\lambda \to \infty$, 
with 
\begin{align} 
c_{m,k}^{\pm -} 
:&= \tilde{I}_{m,k+1}^{\pm} - (-1)^{k} \tilde{I}_{m,k+1}^{\pm \pm^{m}}, \label{c_mk^pm02} 
\end{align} 
where $\tilde{I}_{m,k+1}^{\pm \pm^{m}}$ are defined by \eqref{generalized_Fresnel_integral_def_pm_pm_m_type02}. 
\end{enumerate} 
\end{thm}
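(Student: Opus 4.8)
The plan is to straighten the phase by the substitution $y=f(x)$, which turns the degenerate phase $x^{p}(1+\sum_{j}a_{j}x^{j})$ into the pure power $y^{p}$, and then to invoke Theorem \ref{th02} with the new amplitude $b(y):=a(\Phi(y))\,\frac{d\Phi}{dy}(y)$. Because $\mathrm{supp}\,a\subset U$ is compact and $\Phi$ is a diffeomorphism of $U$, the function $b$ lies in $C^{\infty}_{0}(\mathbb{R})\subset\mathcal{A}^{\tau}_{\delta}(\mathbb{R})$ by Remark \ref{rem01}, so everything proved for the model phases applies verbatim to $b$. Moreover, since $a$ has compact support, the integrals on the left are ordinary absolutely convergent integrals, so the change of variables needs no oscillatory justification: it is the classical substitution rule, and the resulting integral coincides with the oscillatory integral appearing in Theorem \ref{th02}.

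First I would set up $f$ and $\Phi$. For $|x|<R_{0}=(2l_{0})^{-1}$ the bound $|a_{j}|\le l_{0}^{\,j}$ gives $\sum_{j\ge1}|a_{j}x^{j}|\le\sum_{j\ge1}(l_{0}|x|)^{j}=\frac{l_{0}|x|}{1-l_{0}|x|}<1$, so $1+\sum_{j}a_{j}x^{j}$ is strictly positive there and its real $p$-th root is a well-defined positive real-analytic function; hence $f(x)=x(1+\sum_{j}a_{j}x^{j})^{1/p}$ is real-analytic on $(-R_{0},R_{0})$ with $f(0)=0$ and $f'(0)=1$. The inverse function theorem then gives a connected neighborhood $U\subset(-R_{0},R_{0})$ of $0$ on which $f$ is a $C^{\infty}$ diffeomorphism onto its image, and I set $\Phi:=f^{-1}$, so that $f(\Phi(y))=y$. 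Since $(1+\sum_{j}a_{j}x^{j})^{1/p}>0$, the sign of $f(x)$ equals that of $x$, which yields $\Phi(U\cap[0,\infty))=U\cap[0,\infty)$ and $\Phi(U\cap(-\infty,0])=U\cap(-\infty,0]$. The algebraic identity making the whole argument run is $f(x)^{p}=x^{p}(1+\sum_{j}a_{j}x^{j})$, so $y=f(x)$ converts the phase exactly into $y^{p}$.

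For (i) the substitution $x=\Phi(y)$ sends $\int_{0}^{\infty}e^{\pm i\lambda x^{p}(1+\sum_{j}a_{j}x^{j})}a(x)\,dx$ to $\int_{0}^{\infty}e^{\pm i\lambda y^{p}}b(y)\,dy$, and Theorem \ref{th02} (i) applied to $b$ gives the expansion with coefficients $\tilde{I}^{\pm}_{p,k+1}\,b^{(k)}(0)/k!$ and remainder $O(\lambda^{-(N+1-(p-[p]))/p})$; as $b^{(k)}(0)=(d/dy)^{k}|_{y=0}\{a(\Phi(y))\frac{d\Phi}{dy}(y)\}$, this is exactly \eqref{Os_int_analytic_p_01}. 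For (ii) I take $p=m\in\mathbb{N}$, so that $f(x)^{m}=x^{m}(1+\sum_{j}a_{j}x^{j})$ holds on all of $U$ (including $x<0$), and split the integral over $\mathbb{R}$ at the origin. The positive half-line is handled as in (i); for the negative half-line I reflect through Corollary \ref{Os_m} and apply (i) to the reflected integrand, using that the Taylor coefficients of $b(-\,\cdot\,)$ at $0$ are $(-1)^{k}b^{(k)}(0)$. Assembling the two half-line contributions produces a coefficient built from $\tilde{I}^{\pm}_{m,k+1}$ and $\tilde{I}^{\pm\pm^{m}}_{m,k+1}$ as in \eqref{c_mk^pm02}, with remainder $O(\lambda^{-(N+1)/m})$ inherited from Theorem \ref{th02} (ii).

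The main obstacle is not the asymptotic analysis, which is outsourced entirely to Theorem \ref{th02}, but the construction and the sign bookkeeping around $\Phi$. Two points deserve care. First, one must check that the real $p$-th root makes $f$ genuinely $C^{\infty}$ (in fact analytic) across $x=0$, so that $b=(a\circ\Phi)\,\Phi'$ is a single $C^{\infty}_{0}$ function with no singularity at the origin and the Taylor expansion used in Theorem \ref{th02} is legitimate. Second, in (ii) the delicate step is tracking the factor $(-1)^{m}$ generated by $x\mapsto-x$ together with the reflection $\hat{\Phi}(s)=-\Phi(-s)$ of the inverse map; this is precisely what routes the negative half-line through $\tilde{I}^{\pm\pm^{m}}_{m,k+1}$ and pins down the parity sign in the coefficient \eqref{c_mk^pm02}.
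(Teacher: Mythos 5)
Your construction of $f$ and $\varPhi$ and your treatment of part (i) follow the paper's proof essentially verbatim: the bound $|a_{j}||x|^{j}\le (l_{0}|x|)^{j}<2^{-j}$ for $|x|<R_{0}$ giving positivity of $1+\sum_{j}a_{j}x^{j}$, the inverse function theorem at $f(0)=0$, $f'(0)=1$, the sign-preservation of $f$, the observation that the compactly supported integrals are absolutely convergent so the substitution is classical, and the application of Theorem \ref{th02}(i) to $b:=(a\circ\varPhi)\,\varPhi'\in C^{\infty}_{0}(\mathbb{R})$. Part (i) is complete and correct.

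Part (ii) contains a genuine gap, located exactly at the step you defer to your closing remark as ``the delicate step [that] pins down the parity sign.'' Carry your own outline to the end: the positive half-line contributes $\sum_{k}\tilde{I}^{\pm}_{m,k+1}\,b^{(k)}(0)\,\lambda^{-(k+1)/m}/k!$, and the reflected negative half-line, with phase sign $\pm(-1)^{m}$ and with the Taylor coefficients of $b(-\,\cdot\,)$ at $0$ equal to $(-1)^{k}b^{(k)}(0)$, contributes $\sum_{k}(-1)^{k}\tilde{I}^{\pm\pm^{m}}_{m,k+1}\,b^{(k)}(0)\,\lambda^{-(k+1)/m}/k!$. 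Their sum carries the coefficient $\tilde{I}^{\pm}_{m,k+1}+(-1)^{k}\tilde{I}^{\pm\pm^{m}}_{m,k+1}=c^{\pm}_{m,k}$ of \eqref{c_mk^pm} --- that is, Theorem \ref{th02}(ii) applied verbatim to $b$ --- and \emph{not} $c^{\pm -}_{m,k}$ of \eqref{c_mk^pm02}. So the computation you sketch does not produce the coefficient you claim it produces; you asserted the match with \eqref{c_mk^pm02} without doing the bookkeeping. The discrepancy is not cosmetic: taking $a_{j}\equiv 0$ (so $l_{0}=0$, $f=\mathrm{id}$, $\varPhi=\mathrm{id}$, $b=a$), formula \eqref{Os_int_analytic_m_01} must reduce to \eqref{Os_int_m_01}, which forces the coefficient $c^{\pm}_{m,k}$; with $c^{\pm -}_{m,k}$ and $m=2$ the $k=0$ term would vanish and the leading term of the classical stationary phase formula would be lost. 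For comparison, the paper's own proof reaches \eqref{c_mk^pm02} by substituting $x=\varPhi(-u)$ on the negative half-line and then inserting the factor $(-1)^{k}$ from \eqref{Os_int_-infty_0_e_i_lambda_x_m_a(x)_dx02} on top of derivatives already taken in the reflected variable $u$, which double-counts the reflection; your single-reflection bookkeeping is the sound one, but it proves the statement with $c^{\pm}_{m,k}$ in place of $c^{\pm -}_{m,k}$. As written, your proposal leaves the decisive sign unverified and therefore does not establish \eqref{Os_int_analytic_m_01} in the stated form.
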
 

\begin{proof}
(i) 
Since $l := \mathrm{lim~sup}_{j\to \infty} |a_{j}|^{1/j} \leq \sup_{j \in \mathbb{N}} |a_{j}|^{1/j} =: l_{0} < \infty$, 
by Cauchy-Hadamard's formula, 
the radius of convergence of the power series $\sum_{j=1}^{\infty} a_{j} x^{j}$ is $R := 1/l >0$ where $R=\infty$ if $l=0$. 
Moreover 
if $|x| < R_{0} := (2l_{0})^{-1} \leq (2l)^{-1} < R$, 
then 
\begin{align} 
\bigg| \sum_{j=1}^{\infty} a_{j} x^{j} \bigg| 
\leq \sum_{j=1}^{\infty} |a_{j}| |x|^{j} 
< \sum_{j=1}^{\infty} |a_{j}| \Big\{ \big( 2 \sup_{j \in \mathbb{N}} |a_{j}|^{1/j} \big)^{-1} \Big\}^{j} 
\leq \sum_{j=1}^{\infty} 2^{-j} 
= 1. 
\notag 
\end{align} 
Hence 
if $|x| < R_{0}$, 
since $1 + \sum_{j=1}^{\infty} a_{j} x^{j} > 0$, 
then 
\eqref{f(x)=x(1+sum_j=1^infty_a_j_x^j)^1/p} can be defined for $x \in (-R_{0},R_{0})$. 
Since $f$ is a function of class $C^{\infty}$ on $(-R_{0},R_{0})$ with $f(0) = 0$ and $f'(0) = 1$, there exists a connected open neighborhood $U$ of the origin in $\mathbb{R}$ such that $U \subset (-R_{0},R_{0})$ and a diffeomorphism $x = \varPhi (y)$ of class $C^{\infty}$ for $x,y \in U$ such that $f(x) = y$, $\varPhi (U \cap [0,\infty)) = U \cap [0,\infty)$ and $\varPhi (U \cap (-\infty,0]) = U \cap (-\infty,0]$, and for any $a \in C^{\infty}_{0}(\mathbb{R})$ such that $\mathrm{supp}~a \subset U$, 
the following improper integral is absolutely convergent, 
and by change of variable $x = \varPhi (y)$ on $U$, 
since $x^{p} (1 + \sum_{j=1}^{\infty} a_{j} x^{j}) = (f(x))^{p} = y^{p}$, then the following holds: 
\begin{align} 
\int_{0}^{\infty} e^{\pm i\lambda x^{p} \left( 1 + \sum_{j=1}^{\infty} a_{j} x^{j} \right)} a(x) dx 
= \int_{0}^{\infty} e^{\pm i\lambda y^{p}} a(\varPhi (y)) \frac{d\varPhi}{dy}(y) dy. 
\notag 
\end{align} 
Here since $a(\varPhi (y)) \frac{d\varPhi}{dy}(y)$ can be extend to a function in $C^{\infty}_{0}(\mathbb{R})$, 
by Theorem \ref{Lax02} in \S \ref{section_Existence of oscillatory integrals} and Theorem \ref{th02} (i), 
for any $N \in \mathbb{N}$, we obtain \eqref{Os_int_analytic_p_01} as $\lambda \to \infty$. 

(ii) 
If $m \in \mathbb{N}$, by (i), the following improper integral is absolutely convergent, 
and by change of variable $x = \varPhi (y)$ on $U$ and $y=-u$, 
since $x^{m} (1 + \sum_{j=1}^{\infty} a_{j} x^{j}) = (f(x))^{m} = y^{m} = (-u)^{m}$, then the following holds: 
\begin{align} 
\int_{-\infty}^{0} e^{\pm i\lambda x^{m} \left( 1 + \sum_{j=1}^{\infty} a_{j} x^{j} \right)} a(x) dx 
&= -\int_{0}^{\infty} e^{\pm i\lambda (-u)^{m}} a(\varPhi(-u)) \frac{d\varPhi(-u)}{du} du. 
\notag 
\end{align} 
Here 
since $a(\varPhi (-u)) \frac{d\varPhi(-u)}{du}$ can be extend to a function in $C^{\infty}_{0}(\mathbb{R})$, 
by Theorem \ref{Lax02} in \S \ref{section_Existence of oscillatory integrals} and \eqref{Os_int_-infty_0_e_i_lambda_x_m_a(x)_dx02} 
with \eqref{j=0_remainder_m_est02}, 
for any $N \in \mathbb{N}$, 
\begin{align} 
&\int_{-\infty}^{0} e^{\pm i\lambda x^{m} \left( 1 + \sum_{j=1}^{\infty} a_{j} x^{j} \right)} a(x) dx 
\notag \\ 
&= -\sum_{k=0}^{N-1} (-1)^{k} \frac{\tilde{I}_{m,k+1}^{\pm \pm^{m}}}{k!} \left( \frac{d}{du} \right)^{k} \bigg|_{u=0} \left\{ a(\varPhi(-u)) \frac{d\varPhi(-u)}{du} \right\} \lambda ^{-\frac{k+1}{m}} 
+ O\Big( \lambda ^{-\frac{N+1}{m}} \Big) 
\label{int_-infty_0} 
\end{align} 
as $\lambda \to \infty$. 
Therefore plugging \eqref{int_-infty_0} into \eqref{Os_int_analytic_p_01} when $p=m$, 
for any $N \in \mathbb{N}$, 
we obtain \eqref{Os_int_analytic_m_01} with \eqref{c_mk^pm02} as $\lambda \to \infty$. 
\end{proof} 

In particular, when $a_{j}=1/j!$, by using differential coefficients of all orders of 
Lambert W function $X=W(Y)$ at $Y=0$, 
which is the multivalued inverse function of $Y=Xe^{X}$ with two blanches $X=W_{0}(Y)$ for $Y \geq -1/e$ and $X \geq -1$, and $X=W_{-1}(Y)$ for $-1/e \leq Y < 0$ and $X \leq -1$ (\cite{CGHJK}), 
then we can compute a term in the asymptotic expansion concretely as follows: 
\begin{cor}[\cite{Nagano02}]  
\label{Lambert_W_function} 
Assume that $p > 0$. 
Let $X=W_{0}(Y)$ be the blanch of Lambert W function for $Y \geq -1/e$ and $X \geq -1$, and $a \in C^{\infty}_{0}(\mathbb{R})$. Then the following hold:
\begin{enumerate} 
\item[(i)] 
For any $N \in \mathbb{N}$, 
\begin{align} 
&\int_{0}^{\infty} e^{\pm i\lambda x^{p} e^{x}} a(x) dx \notag \\ 
&= \sum_{k=0}^{N-1} 
\frac{\tilde{I}_{p,k+1}^{\pm}}{k!} \left( \frac{d}{dy} \right)^{k} \bigg|_{y=0} \left\{ a\bigg( p W_{0} \bigg( \frac{y}{p} \bigg) \bigg) \frac{dW_{0}}{dY} \bigg( \frac{y}{p} \bigg) \right\} \lambda ^{-\frac{k+1}{p}} 
+ O\Big( \lambda ^{-\frac{N+1-(p-[p])}{p}} \Big) 
\notag 
\end{align} 
as $\lambda \to \infty$, 
where $\tilde{I}_{p,k+1}^{\pm}$ are defined by \eqref{tilde_I_p_k+1_pm}. 

\item[(ii)] 
If $m \in \mathbb{N}$, 
then for any $N \in \mathbb{N}$, 
\begin{align} 
&\int_{-\infty}^{\infty} e^{\pm i\lambda x^{m} e^{x}} a(x) dx \notag \\ 
&= \sum_{k=0}^{N-1} 
\frac{c_{m,k}^{\pm -}}{k!} \left( \frac{d}{dy} \right)^{k} \bigg|_{y=0} \left\{ a\bigg( m W_{0} \bigg( \frac{y}{m} \bigg) \bigg) \frac{dW_{0}}{dY} \bigg( \frac{y}{m} \bigg) \right\} \lambda ^{-\frac{k+1}{m}} + O\Big( \lambda ^{-\frac{N+1}{m}} \Big) 
\notag 
\end{align} 
as $\lambda \to \infty$, 
where $c_{m,k}^{\pm -}$ are defined by \eqref{c_mk^pm02}. 
\end{enumerate} 
\end{cor}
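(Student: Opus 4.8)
The plan is to specialize Theorem \ref{analytic_phase} to the sequence $a_j = 1/j!$ and then to recognize the resulting inverse diffeomorphism $\varPhi$ as the principal branch of the Lambert $W$ function. First I would verify the hypotheses: with $a_j = 1/j!$ one has $\sup_{j \in \mathbb{N}} |a_j|^{1/j} = \sup_{j \in \mathbb{N}} (j!)^{-1/j}$, and since $(j!)^{1/j}$ is increasing in $j$ (so $(j!)^{-1/j}$ is decreasing, with value $1$ at $j=1$), we get $l_0 = 1 < \infty$, so Theorem \ref{analytic_phase} applies with $R_0 = (2l_0)^{-1} = 1/2$. Moreover $1 + \sum_{j=1}^{\infty} a_j x^j = \sum_{j=0}^{\infty} x^j/j! = e^x$, so the phase becomes $x^p e^x$ and, in the notation of \eqref{f(x)=x(1+sum_j=1^infty_a_j_x^j)^1/p}, $f(x) = x(e^x)^{1/p} = x e^{x/p}$.

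The key step is to solve $f(x) = y$ explicitly, i.e.\ to invert $x e^{x/p} = y$. Setting $X := x/p$ this reads $X e^{X} = y/p$, which is precisely the defining equation $Y = X e^{X}$ of the Lambert $W$ function with $Y = y/p$. I would select the principal branch $W_0$: since $W_0(0) = 0$, $W_0$ is real-analytic on $(-1/e,\infty)$ (hence $C^\infty$ near $Y=0$) with $W_0'(0) = 1$, the map $y \mapsto p W_0(y/p)$ sends $0$ to $0$ with derivative $1$ at the origin, matching the conditions $\varPhi(0)=0$, $\varPhi'(0)=1$ forced by $f(0)=0$, $f'(0)=1$. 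By uniqueness of the local $C^\infty$ inverse of $f$ supplied by Theorem \ref{analytic_phase} (after shrinking $U$ if necessary so that $W_0$ is analytic and the branch conditions $Y \ge -1/e$, $X \ge -1$ hold throughout $U$), I identify $\varPhi(y) = p W_0(y/p)$. The chain rule then gives $\frac{d\varPhi}{dy}(y) = p \cdot \frac{dW_0}{dY}(y/p) \cdot \frac{1}{p} = \frac{dW_0}{dY}(y/p)$.

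With these two identifications, the conclusion is immediate. Substituting $\varPhi(y) = p W_0(y/p)$ and $\frac{d\varPhi}{dy}(y) = \frac{dW_0}{dY}(y/p)$ into \eqref{Os_int_analytic_p_01} yields part (i). For part (ii), the same computation with $p$ replaced by $m \in \mathbb{N}$ gives $\varPhi(y) = m W_0(y/m)$ and $\frac{d\varPhi}{dy}(y) = \frac{dW_0}{dY}(y/m)$, and substituting these into \eqref{Os_int_analytic_m_01} yields the stated formula with the coefficients $c_{m,k}^{\pm -}$ of \eqref{c_mk^pm02}, for any $a \in C^\infty_0(\mathbb{R})$ with support in the (possibly shrunk) neighborhood $U$.

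The main obstacle is the branch and regularity bookkeeping for $W_0$ rather than any new analysis: one must confirm that on $U \subset (-1/2,1/2)$ — after shrinking so that $x/p$ stays in the range where the principal branch is the relevant real-analytic inverse (in particular $x/p > -1$, which may force $U \subset (-p,p)$ when $p < 1/2$) — the function $p W_0(\cdot/p)$ is genuinely the diffeomorphism $\varPhi$ delivered by Theorem \ref{analytic_phase}, and that the compact support of $a$ may be taken inside this shrunk $U$ without altering the stated asymptotic expansions.
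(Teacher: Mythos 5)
Your proposal is correct and follows essentially the same route as the paper: verify $l_{0}=\sup_{j}(j!)^{-1/j}<\infty$ so that Theorem \ref{analytic_phase} applies, recognize $1+\sum_{j\geq 1}x^{j}/j!=e^{x}$ so $f(x)=xe^{x/p}$, invert via $ (x/p)e^{x/p}=y/p$ to get $\varPhi(y)=pW_{0}(y/p)$ and $\varPhi'(y)=W_{0}'(y/p)$, and substitute into \eqref{Os_int_analytic_p_01} and \eqref{Os_int_analytic_m_01}. Your extra bookkeeping on the branch conditions and on shrinking $U$ is a welcome refinement of details the paper leaves implicit, but it does not change the argument.
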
 

\begin{proof} 
If $a_{j}=1/j!$, since $\sup_{j \in \mathbb{N}} |1/j!|^{1/j} = \sup_{j \in \mathbb{N}} \prod_{k=1}^{j} (1/k)^{1/j} \leq 1$, 
then (i), (ii) of Theorem \ref{analytic_phase} hold. 
Let $y = x e^{\frac{x}{p}}$ for $x \in \mathbb{R}$, 
since $\frac{y}{p} = \big( \frac{x}{p} \big) e^{\frac{x}{p}}$, 
then $\frac{x}{p} = W_{0} \big( \frac{y}{p} \big)$ for $y \geq -p/e$ and $x \geq -p$. 
Hence $x = \varPhi (y) = pW_{0} \big( \frac{y}{p} \big)$ and $\frac{d\varPhi}{dy}(y) = \frac{dW_{0}}{dY} \big( \frac{y}{p} \big)$. 
Therefore (i) and (ii) hold. 
\end{proof} 

Finally by using Theorem \ref{th02} for $a \in C^{\infty}_{0}(\mathbb{R}^{n})$, 
we obtain asymptotic expansions of oscillatory integrals with degenerate phases including the types $A_{k}$, $E_6$, $E_8$ in multivariable. 
\begin{thm}[\cite{Nagano-Miyazaki02}] 
\label{multivariable} 
Let $a \in C^{\infty}_{0}(\mathbb{R}^{n})$. 
Then the following hold: 
\begin{enumerate} 
\item[(i)] 
If $p:=(p_{1},\dots,p_{n}) \in (0,\infty)^{n}$, 
then for any $N \in \mathbb{N}$, 
\begin{align} 
&\int_{(0,\infty)^{n}} e^{i\lambda \sum_{j=1}^{n} \pm_{j} x_{j}^{p_{j}}} a(x) dx \notag \\ 
&= \sum_{\alpha \in \Omega_{p}^{N}} \prod_{j=1}^{n} \tilde{I}^{\pm_{j}}_{p_{j},\alpha_{j}+1} \frac{\partial_{x}^{\alpha} a(0)}{\alpha!} \lambda^{-\sum_{j=1}^{n} \frac{\alpha_{j}+1}{p_{j}}} + O \Big( \lambda^{-\frac{N+1-\max (p_{j}-[p_{j}])}{\max p_{j}}} \Big) 
\label{tilde_I_phi_a_lambda} 
\end{align} 
as $\lambda \to \infty$, 
where $\pm_{j}$ represents ``$+$" or ``$-$" for each $j$, 
\begin{align} 
\Omega_{p}^{N} 
:= \bigg\{ \alpha = (\alpha_{1},\dots,\alpha_{n}) \in \mathbb{Z}_{\geq 0}^{n} \bigg| \sum_{j=1}^{n} \frac{\alpha_{j}+1}{p_{j}} < \frac{N+1-\max (p_{j}-[p_{j}])}{\max p_{j}} \bigg\} 
\label{Omega_k} 
\end{align} 
and 
\begin{align} 
\tilde{I}^{\pm_{j}}_{p_{j},\alpha_{j}+1} 
:= Os\mbox{-}\int_{0}^{\infty} e^{\pm_{j} ix_{j}^{p_{j}}} x_{j}^{\alpha_{j}} dx_{j} 
= p_{j}^{-1} e^{\pm_{j} i\frac{\pi}{2} \frac{\alpha_{j}+1}{p_{j}}} \varGamma \left( \frac{\alpha_{j}+1}{p_{j}} \right). 
\label{c_pm_j_j_alpha_j_0} 
\end{align} 

\item[(ii)] 
If $m:=(m_{1},\dots,m_{n}) \in \mathbb{N}^{n}$, 
then for any $N \in \mathbb{N}$, 
\begin{align} 
\int_{\mathbb{R}^{n}} e^{i\lambda \sum_{j=1}^{n} \pm_{j} x_{j}^{m_{j}}} a(x) dx 
&= \sum_{\alpha \in \Omega_{m}^{N}} \prod_{j=1}^{n} c^{\pm_{j}}_{m_{j},\alpha_{j}} \frac{\partial_{x}^{\alpha} a(0)}{\alpha!}  \lambda^{-\sum_{j=1}^{n} \frac{\alpha_{j}+1}{m_{j}}} + O \Big( \lambda^{-\frac{N+1}{\max m_{j}}} \Big) 
\notag 
\end{align} 
as $\lambda \to \infty$, 
with 
\begin{align} 
c^{\pm_{j}}_{m_{j},\alpha_{j}} 
:&= \tilde{I}_{m_{j},\alpha_{j}+1}^{\pm_{j}} + (-1)^{\alpha_{j}} \tilde{I}_{m_{j},\alpha_{j}+1}^{\pm_{j} \pm^{m_{j}}}, 
\notag 
\end{align} 
where 
\begin{align} 
\tilde{I}_{m_{j},\alpha_{j}+1}^{\pm_{j} \pm^{m_{j}}} 
:= Os\mbox{-}\int_{0}^{\infty} e^{\pm_{j} (-1)^{m_{j}} ix_{j}^{m_{j}}} x_{j}^{\alpha_{j}} dx_{j} 
= m_{j}^{-1} e^{\pm_{j} (-1)^{m_{j}} i\frac{\pi}{2} \frac{\alpha_{j}+1}{m_{j}}} \varGamma \left( \frac{\alpha_{j}+1}{m_{j}} \right). 
\notag 
\end{align} 
\end{enumerate} 
\end{thm}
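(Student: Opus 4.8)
The plan is to argue by induction on the dimension $n$, reducing the multivariable statement to the one-variable expansions already established in Theorem \ref{th02}. The base case $n=1$ is precisely Theorem \ref{th02}~(i) for part~(i) and Theorem \ref{th02}~(ii) for part~(ii), once the coefficients $\tilde I^{\pm}_{p,k+1}$ and $c^{\pm}_{m,k}$ are matched with those written here. The crucial structural observation is that the phase is a sum over the variables, so the integrand factorizes,
\begin{align}
e^{i\lambda \sum_{j=1}^{n} \pm_{j} x_{j}^{p_{j}}} a(x) = \prod_{j=1}^{n} e^{\pm_{j} i\lambda x_{j}^{p_{j}}} \cdot a(x).
\notag
\end{align}
Since $a \in C^{\infty}_{0}(\mathbb{R}^{n})$, every iterated integral is absolutely convergent and Fubini's theorem applies, so I may integrate one variable at a time.

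For the inductive step in part~(i), I would write $x = (x_{1},x')$ with $x' = (x_{2},\dots,x_{n})$ and first apply the induction hypothesis to the inner $(n-1)$-dimensional integral
\begin{align}
G(\lambda,x_{1}) := \int_{(0,\infty)^{n-1}} e^{i\lambda \sum_{j=2}^{n} \pm_{j} x_{j}^{p_{j}}} a(x_{1},x') \, dx',
\notag
\end{align}
treating $x_{1}$ as a parameter. Because $a$ has compact support, the seminorms of $x' \mapsto a(x_{1},x')$ are bounded uniformly in $x_{1}$, so the induction hypothesis produces an expansion of $G$ whose coefficients are the $C^{\infty}_{0}(\mathbb{R})$ functions $x_{1} \mapsto \partial_{x'}^{\beta} a(x_{1},0)/\beta!$ and whose remainder is $O(\lambda^{-T'})$ \emph{uniformly} in $x_{1}$. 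I would then feed each coefficient function into Theorem \ref{th02}~(i), expanding in $x_{1}$; the generalized Fresnel factors multiply to give $\prod_{j=1}^{n}\tilde I^{\pm_{j}}_{p_{j},\alpha_{j}+1}$, and the successive differentiations assemble into the full multi-index derivative $\partial_{x}^{\alpha}a(0)/\alpha!$ with $\alpha = (\alpha_{1},\beta)$. Part~(ii) is handled identically over $\mathbb{R}^{n}$, invoking Theorem \ref{th02}~(ii) in each variable so that $c^{\pm_{j}}_{m_{j},\alpha_{j}}$ replaces $\tilde I^{\pm_{j}}_{p_{j},\alpha_{j}+1}$; here $m_{j}-[m_{j}]=0$, which is exactly what collapses the threshold to $(N+1)/\max_{j} m_{j}$.

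The main obstacle, and the step deserving the most care, is the remainder bookkeeping: showing that every error produced decays at least like $\lambda^{-T}$ with $T = (N+1-\max_{j}(p_{j}-[p_{j}]))/\max_{j}p_{j}$, and that the surviving main terms are exactly those indexed by $\Omega_{p}^{N}$. My strategy is to run each one-variable expansion to an order large enough that the corresponding remainder, estimated through the uniform bounds \eqref{j=0_remainder02} and \eqref{j=0_remainder_m_est} and then integrated over the remaining compactly supported variables, is already $O(\lambda^{-T})$; such orders exist because $p_{j}-[p_{j}] \le \max_{j}(p_{j}-[p_{j}])$ for every $j$, which is precisely the inequality that lets the worst fractional correction dominate each individual one. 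Among the finitely many genuine product terms so produced, any term with $\sum_{j}(\alpha_{j}+1)/p_{j} \ge T$ satisfies $\lambda^{-\sum_{j}(\alpha_{j}+1)/p_{j}} \le \lambda^{-T}$ for $\lambda \ge 1$ and is therefore absorbed into the remainder, leaving exactly the indices with $\sum_{j}(\alpha_{j}+1)/p_{j} < T$, which is the defining condition of $\Omega_{p}^{N}$ in \eqref{Omega_k}. The delicate point throughout is that the decay in the slowest direction, governed by $\max_{j}p_{j}$, controls the whole expansion; identifying this direction and verifying the interplay between the per-variable orders and the unified threshold $T$ is where the real work lies.
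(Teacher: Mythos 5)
Your proposal is correct and follows essentially the same route as the paper: both reduce the integral to iterated one-variable integrals via Fubini, apply Theorem \ref{th02} in each variable with the remaining variables held as parameters in the amplitude (relying on the fact that the one-variable remainder bounds depend only on finitely many seminorms, hence are uniform), use the inequality $\frac{N+1-(p_{k}-[p_{k}])}{p_{k}} \geq \frac{N+1-\max_{j}(p_{j}-[p_{j}])}{\max_{j}p_{j}}$ to unify the per-variable errors, and finally absorb every product term with $\sum_{j}(\alpha_{j}+1)/p_{j} \geq T$ into the remainder, which is exactly how $\Omega_{p}^{N}$ arises. The only cosmetic difference is that you organize the iteration as an induction on $n$ integrating the inner block first, whereas the paper peels off $x_{1}$ first and tracks the accumulated remainders $I^{(k)}_{1}$ explicitly; a fully written version of yours would just need to record, as the paper does, that every $\alpha \in \Omega_{p}^{N}$ satisfies $\alpha_{j}<N$ so that no main term is missed.
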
 

\begin{proof} 
(i) 
Since $a \in C^{\infty}_{0}(\mathbb{R}^{n})$, 
there exists a positive constant $r$ such that $\mathrm{supp}~a \subset \prod_{j=1}^{n} [-r,r]$. 
Let $\varphi_{j}(x_{j}) \in C^{\infty}_{0}(\mathbb{R})$ be a cut-off function such that $\varphi_{j} \equiv 1$ on $|x_{j}| \leq r_{0}$ with $r_{0} \geq \max \{ r,1 \}$ and $\varphi_{j} \equiv 0$ on $|x_{j}| \geq r_{1} > r_{0}$, and $\psi_{j}(x_{j}) := 1 - \varphi_{j}(x_{j})$ for $j=1,\dots,n$, 
and let 
\begin{align} 
&a_{1,\dots,j-1}(x_{j},\dots,x_{n}) 
:= \prod_{k=1}^{j-1} \frac{\partial_{x_{k}}^{\alpha_{k}}}{\alpha_{k}!} \bigg|_{x_{k}=0} \varphi_{k}(x_{k}) a(x) 
\label{a_1_j-1_x_j_def} 
\end{align} 
for $x = (x_{1},\dots,x_{n}) \in \mathbb{R}^{n}$ and $j=1,\dots,n+1$, where 
\begin{align} 
&a_{1,\dots,j-1}(x_{j},\dots,x_{n}) := a(x), & &\text{if $j=1$}, \notag \\ 
&a_{1,\dots,j-1}(x_{j},\dots,x_{n}) := \frac{\partial_{x}^{\alpha} a(0)}{\alpha!}, & &\text{if $j=n+1$}. 
\notag 
\end{align} 
Then since $a(x)\psi_{j}(x_{j}) \equiv 0$ for $x_{j} \in \mathbb{R}$, 
by Theorem \ref{th02} (i), for any $N \in \mathbb{N}$, 
\begin{align} 
&\int_{0}^{\infty} dx_{j} e^{\pm_{j} i\lambda x_{j}^{p_{j}}} a_{1,\dots,j-1}(x_{j},\dots,x_{n}) \notag \\ 
&= \sum_{\alpha_{j}=0}^{N-1} \tilde{I}_{p_{j},\alpha_{j}+1}^{\pm_{j}} \lambda^{-\frac{\alpha_{j}+1}{p_{j}}} a_{1,\dots,j}(x_{j+1},\dots,x_{n}) 
+ R^{\pm_{j}}_{p_{j},N} (\lambda;x_{j+1},\dots,x_{n}) 
\label{j=1_dots_n-1_integral} 
\end{align} 
with 
\begin{align} 
R^{\pm_{j}}_{p_{j},N} (\lambda;x_{j+1},\dots,x_{n}) 
&= \sum_{\alpha_{j}=N}^{N+[p_{j}]-1} \tilde{I}_{p_{j},\alpha_{j}+1}^{\pm_{j}} \lambda^{-\frac{\alpha_{j}+1}{p_{j}}} a_{1,\dots,j}(x_{j+1},\dots,x_{n}) \notag \\ 
&\hspace{0.90cm}+ \int_{0}^{\infty} dx_{j} e^{\pm_{j} i\lambda x_{j}^{p_{j}}} x_{j}^{N+[p_{j}]} \tilde{a}_{j}(x_{j},\dots,x_{n}) 
\label{j=1_dots_n-1_remainder} 
\end{align} 
and 
\begin{align} 
&\tilde{a}_{j}(x_{j},\dots,x_{n}) := \frac{1}{(N+[p_{j}]-1)!} \notag \\ 
&\hspace{0.0cm}\times \int_{0}^{1} d\theta_{j} (1-\theta_{j})^{N+[p_{j}]-1} \partial_{y_{j}}^{N+[p_{j}]} \bigg|_{y_{j} = \theta_{j} x_{j}} a_{1,\dots,j-1}(y_{j},x_{j+1},\dots,x_{n}) 
\label{tilde_a_j_x_j_def} 
\end{align} 
for $j=1,\dots,n$, 
where $\tilde{I}_{p_{j},\alpha_{j}+1}^{\pm_{j}}$ is defined by \eqref{c_pm_j_j_alpha_j_0}, and 
\begin{align} 
&R^{\pm_{j}}_{p_{j},N} (\lambda;x_{j+1},\dots,x_{n}) := R^{\pm_{n}}_{p_{n},N}(\lambda), & &\text{if $j=n$}, 
\notag \\ 
&a_{1,\dots,j-1}(y_{j},x_{j+1},\dots,x_{n}) := a_{1,\dots,n-1}(y_{n}), & &\text{if $j=n$}. 
\notag 
\end{align} 

Thanks to $a \in C^{\infty}_{0}(\mathbb{R}^{n})$, 
by \eqref{a_1_j-1_x_j_def}, \eqref{tilde_a_j_x_j_def} and \eqref{j=1_dots_n-1_remainder}, 
since $a_{1,\dots,j-1}, \tilde{a}_{j} \in C^{\infty}_{0}(\mathbb{R}^{n-j-1})$ and $R^{\pm_{j}}_{p_{j},N} \in C^{\infty}_{0}(\mathbb{R}^{n-j})$ as to $ (x_{j+1},\dots,x_{n})$, then 
the given integral can be reduced to an iterated integral, which can be decomposed into a product of asymptotic expansions of the integrals of each variable using \eqref{j=1_dots_n-1_integral} with \eqref{j=1_dots_n-1_remainder}. 
In fact, for any $N \in \mathbb{N}$, we have
\begin{align} 
I^{(0)}_{0} 
:&= \int_{(0,\infty)^{n}} e^{i\lambda \sum_{j=1}^{n} \pm_{j} x_{j}^{p_{j}}} a(x) dx 
= \prod_{j=1}^{n} \int_{0}^{\infty} dx_{j} e^{\pm_{j}i\lambda x_{j}^{p_{j}}} a(x) \notag \\ 
&= \prod_{j=2}^{n} \int_{0}^{\infty} dx_{j} e^{\pm_{j}i\lambda x_{j}^{p_{j}}} \notag \\ 
&\hspace{0.5cm}\times 
\Bigg( \sum_{\alpha_{1}=0}^{N-1} \tilde{I}_{p_{1},\alpha_{1}+1}^{\pm_{1}} \lambda^{-\frac{\alpha_{1}+1}{p_{1}}} a_{1}(x_{2},\dots,x_{n}) 
+ R^{\pm_{1}}_{p_{1},N} (\lambda;x_{2},\dots,x_{n}) \Bigg) \notag \\ 
&= I^{(1)}_{0} + I^{(1)}_{1} 
\label{I_0_0} 
\end{align} 
with 
\begin{align} 
I^{(1)}_{0} 
:&= \sum_{\alpha_{1}=0}^{N-1} \tilde{I}_{p_{1},\alpha_{1}+1}^{\pm_{1}} \lambda^{-\frac{\alpha_{1}+1}{p_{1}}} \prod_{j=2}^{n} \int_{0}^{\infty} dx_{j} e^{\pm_{j}i\lambda x_{j}^{p_{j}}} a_{1}(x_{2},\dots,x_{n}), \notag \\ 
I^{(1)}_{1} 
:&= \prod_{j=2}^{n} \int_{0}^{\infty} dx_{j} e^{\pm_{j}i\lambda x_{j}^{p_{j}}} R^{\pm_{1}}_{p_{1},N} (\lambda;x_{2},\dots,x_{n}). 
\notag 
\end{align} 
Similarly we have 
\begin{align} 
I^{(1)}_{0} 
&= \sum_{\alpha_{1}=0}^{N-1} \tilde{I}^{\pm_{1}}_{p_{1},\alpha_{1}+1} \lambda^{-\frac{\alpha_{1}+1}{p_{1}}} \prod_{j=3}^{n} \int_{0}^{\infty} dx_{j} e^{\pm_{j}i\lambda x_{j}^{p_{j}}} \notag \\ 
&\hspace{0.65cm}\times \Bigg( \sum_{\alpha_{2}=0}^{N-1} \tilde{I}^{\pm_{2}}_{p_{2},\alpha_{2}+1} \lambda^{-\frac{\alpha_{2}+1}{p_{2}}} a_{1,2}(x_{3},\dots,x_{n}) 
+ R^{\pm_{2}}_{p_{2},N} (\lambda;x_{3},\dots,x_{n}) \Bigg) \notag \\ 
&= I^{(2)}_{0} + I^{(2)}_{1} 
\notag 
\end{align} 
with 
\begin{align} 
I^{(2)}_{0} 
:&= \sum_{\alpha_{1}=0}^{N-1} \sum_{\alpha_{2}=0}^{N-1} \prod_{j=1}^{2} \tilde{I}^{\pm_{j}}_{p_{j},\alpha_{j}+1} \lambda^{-\sum_{j=1}^{2} \frac{\alpha_{j}+1}{p_{j}}} \prod_{j=3}^{n} \int_{0}^{\infty} dx_{j} e^{\pm_{j}i\lambda x_{j}^{p_{j}}} a_{1,2}(x_{3},\dots,x_{n}), \notag \\ 
I^{(2)}_{1} 
:&= \sum_{\alpha_{1}=0}^{N-1} \tilde{I}^{\pm_{1}}_{p_{1},\alpha_{1}+1} \lambda^{-\frac{\alpha_{1}+1}{p_{1}}} \prod_{j=3}^{n} \int_{0}^{\infty} dx_{j} e^{\pm_{j}i\lambda x_{j}^{p_{j}}} R^{\pm_{2}}_{p_{2},N} (\lambda;x_{3},\dots,x_{n}). 
\notag 
\end{align} 
Inductively we have 
\begin{align} 
I^{(0)}_{0} 
= I^{(n)}_{0} + \sum_{k=1}^{n} I^{(k)}_{1} 
\notag 
\end{align} 
with 
\begin{align} 
I^{(n)}_{0} 
:&= \sum_{\alpha_{1}=0}^{N-1} \cdots \sum_{\alpha_{n}=0}^{N-1} \prod_{j=1}^{n} \tilde{I}^{\pm_{j}}_{p_{j},\alpha_{j}+1} \frac{\partial_{x}^{\alpha} a(0)}{\alpha!} \lambda^{-\sum_{j=1}^{n} \frac{\alpha_{j}+1}{p_{j}}}, 
\label{I_n_0_def} \\ 
I^{(k)}_{1} 
:&= \sum_{\alpha_{1}=0}^{N-1} \cdots \sum_{\alpha_{k-1}=0}^{N-1} \prod_{j=1}^{k-1} \tilde{I}^{\pm_{j}}_{p_{j},\alpha_{j}+1} \lambda^{-\sum_{j=1}^{k-1} \frac{\alpha_{j}+1}{p_{j}}} F_{k}(\lambda), 
\label{I_k_1_def} 
\end{align} 
and 
\begin{align} 
F_{k}(\lambda) 
:= \prod_{j=k+1}^{n} \int_{0}^{\infty} dx_{j} e^{\pm_{j}i\lambda x_{j}^{p_{j}}} R^{\pm_{k}}_{p_{k},N} (\lambda;x_{k+1},\dots,x_{n}) 
\label{J_k_lambda_def} 
\end{align} 
for $k=1,\dots,n$, 
where 
\begin{align} 
&I^{(k)}_{1} := F_{1}(\lambda), &\text{if $k=1$}, \notag \\ 
&F_{k}(\lambda) := R^{\pm_{n}}_{p_{n},N} (\lambda), &\text{if $k=n$}. 
\notag 
\end{align} 

In order to estimate $I^{(k)}_{1}$, we shall estimate $F_{k}(\lambda)$. \\ 
Since $a_{1,\dots,k-1}, \tilde{a}_{k} \in C^{\infty}_{0}(\mathbb{R}^{n-k-1})$, from \eqref{J_k_lambda_def} with \eqref{j=1_dots_n-1_remainder}, we have 
\begin{align} 
F_{k}(\lambda) = F_{k1}(\lambda) + F_{k2}(\lambda) 
\label{J_k_lambda_decomposition} 
\end{align} 
with 
\begin{align} 
F_{k1}(\lambda) 
:&= \sum_{\alpha_{k}=N}^{N+[p_{k}]-1} \tilde{I}_{p_{k},\alpha_{k}+1}^{\pm_{k}} \lambda^{-\frac{\alpha_{k}+1}{p_{k}}} \prod_{j=k+1}^{n} \int_{0}^{\infty} dx_{j} e^{\pm_{j}i\lambda x_{j}^{p_{j}}} 
a_{1,\dots,k}(x_{k+1},\dots,x_{n}), \notag \\ 
F_{k2}(\lambda) 
:&= \prod_{j=k+1}^{n} \int_{0}^{\infty} dx_{j} e^{\pm_{j}i\lambda x_{j}^{p_{j}}} \int_{0}^{\infty} dx_{k} e^{\pm_{k} i\lambda x_{k}^{p_{k}}} x_{k}^{N+[p_{k}]} \tilde{a}_{k}(x_{k},\dots,x_{n}). 
\notag 
\end{align} 
By interchanging integrals, $F_{k2}(\lambda)$ can be expressed as the oscillatory integral 
\begin{align} 
F_{k2}(\lambda) 
&= \int_{0}^{\infty} dx_{k} e^{\pm_{k} i\lambda x_{k}^{p_{k}}} x_{k}^{N+[p_{k}]} b_{k}(x_{k}) 
= \tilde{I}_{p_{k},N+[p_{k}]+1}^{\pm_{k}}[b_{k}](\lambda), 
\notag 
\end{align} 
with 
\begin{align} 
b_{k}(x_{k}) :&= \prod_{j=k+1}^{n} \int_{0}^{\infty} dx_{j} e^{\pm_{j}i\lambda x_{j}^{p_{j}}} \tilde{a}_{k}(x_{k},\dots,x_{n}). 
\notag 
\end{align} 
Thanks to $\tilde{a}_{k} \in C^{\infty}_{0}(\mathbb{R}^{n-k-1})$, since $b_{k} \in C^{\infty}_{0}(\mathbb{R}) \subset \mathcal{A}^{\tau}_{\delta}(\mathbb{R})$, 
by Theorem \ref{Lax02} (v) in \S \ref{section_Existence of oscillatory integrals}, 
there exists a positive constant $M^{(k)}_{2}$ such that for any $\lambda \geq 1$, 
\begin{align} 
|F_{k2}(\lambda)| 
\leq M^{(k)}_{2} \lambda^{-\frac{N+1-(p_{k}-[p_{k}])}{p_{k}}}. 
\label{J_k_2_lambda_est} 
\end{align} 
On the other hands, since $a_{1,\dots,k} \in C^{\infty}_{0}(\mathbb{R}^{n-k})$, $F_{k1}(\lambda)$ satisfies 
\begin{align} 
|F_{k1}(\lambda)| 
\leq M^{(k)}_{1} \lambda^{-\frac{N+1}{p_{k}}} 
\label{J_k_1_lambda_est} 
\end{align} 
for $\lambda \geq 1$, where $M^{(k)}_{1}$ is a positive constant. 

Combining \eqref{J_k_lambda_decomposition} with \eqref{J_k_2_lambda_est} and \eqref{J_k_1_lambda_est} yields 
\begin{align} 
|F_{k}(\lambda)| 
\leq M^{(k)}_{1} \lambda^{-\frac{N+1}{p_{k}}} + M^{(k)}_{2} \lambda^{-\frac{N+1-(p_{k}-[p_{k}])}{p_{k}}} 
\leq M^{(k)} \lambda^{-\frac{N+1-(p_{k}-[p_{k}])}{p_{k}}} 
\notag 
\end{align} 
for $\lambda \geq 1$, where $M^{(k)}:=\max_{h=1,2} M^{(k)}_{h}$. 
Moreover adding \eqref{I_k_1_def} leads to 
\begin{align} 
|I^{(k)}_{1}| 
\leq C_{k} \lambda^{-\sum_{j=1}^{k-1} \frac{1}{p_{j}}} M^{(k)} \lambda^{-\frac{N+1-(p_{k}-[p_{k}])}{p_{k}}} 
\leq C_{k} M^{(k)} \lambda^{-\frac{N+1-(p_{k}-[p_{k}])}{p_{k}}} 
\notag 
\end{align} 
for $\lambda \geq 1$, where $C_{k}$ is a positive constant. 
Thus we have 
\begin{align} 
\Bigg| \sum_{k=1}^{n} I^{(k)}_{1} \Bigg| 
\leq \sum_{k=1}^{n} C_{k} M^{(k)} \lambda^{-\frac{N+1-(p_{k}-[p_{k}])}{p_{k}}} 
\leq M_{n} \lambda^{-\frac{N+1-\max (p_{j}-[p_{j}])}{\max p_{j}}} 
\label{sum_k=1_n_I_k_1_est} 
\end{align} 
for $\lambda \geq 1$, where $M_{n}:=n\max_{k=1,\dots,n} C_{k} M^{(k)}$. 

By \eqref{sum_k=1_n_I_k_1_est}, any term in \eqref{I_n_0_def} where the order of $\lambda$ is less than or equal to $-\frac{N+1-\max (p_{j}-[p_{j}])}{\max p_{j}}$ can be turned into the remainder term. 
In fact, defining $\Omega_{p}^{N}$ by \eqref{Omega_k}, if $\alpha = (\alpha_{1},\dots,\alpha_{n}) \in \Omega_{p}^{N}$, since 
\begin{align} 
\frac{\alpha_{j}+1}{p_{j}} \leq \sum_{j=1}^{n} \frac{\alpha_{j}+1}{p_{j}} < \frac{N+1-\max (p_{j}-[p_{j}])}{\max p_{j}} \leq \frac{N+1}{p_{j}}, 
\notag 
\end{align} 
then 
\begin{align} 
\alpha_{j} < N~\text{for}~j=1.\dots,n,~\text{and}~\lambda^{-\sum_{j=1}^{n} \frac{\alpha_{j}+1}{p_{j}}} > \lambda^{-\frac{N+1-\max (p_{j}-[p_{j}])}{\max p_{j}}}~\text{for}~\lambda \geq 1. 
\notag 
\end{align} 
Therefore for any $N \in \mathbb{N}$, we obtain \eqref{tilde_I_phi_a_lambda} as $\lambda \to \infty$. 

(ii) 
If we replace the use of Theorem \ref{th02} (i) in the proof of (i) with Theorem \ref{th02} (ii), we can prove (ii). 
\end{proof}

\end{document}